 \newcommand{\easterEgg}[1]{}
\def\ce{\ensuremath{\mathcal{E}}}
\def\cd{\ensuremath{\mathcal{D}}}
\DeclareMathOperator{\sm}{{sm}}
\DeclareMathOperator{\chab}{{chab}}
\DeclareMathOperator{\ur}{ur}
\DeclareMathOperator{\cj}{\mathcal{J}}
\DeclareMathOperator{\num}{num}
\DeclareMathOperator{\toric}{tor}
\DeclareMathOperator{\Cliff}{Cliff}
\newcommand{\defi}[1]{\textsf{#1}} 				
\newcommand{\F}{{\mathbb F}}
\newcommand{\G}{{\mathbb G}}
\newcommand{\Q}{{\mathbb Q}}
\newcommand{\Z}{{\mathbb Z}}
\newcommand{\mdeg}{{\underline{\deg}}}
\newcommand{\pp}{{\mathfrak p}}
\newcommand{\calD}{{\mathcal D}}
\newcommand{\calE}{{\mathcal E}}
\newcommand{\calL}{{\mathcal L}}
\newcommand{\calM}{{\mathcal M}}
\newcommand{\calO}{{\mathcal O}}
\newcommand{\OO}{{\mathcal O}}
\newcommand{\scrL}{{\mathscr L}}
\newcommand{\scrX}{{\mathscr X}}
\def\Q{\mathbb{Q}}
\def\P{\mathbb{P}}
\def\Z{\mathbb{Z}}
 \DeclareMathOperator{\divv}{div}
\DeclareMathOperator{\ord}{ord} \DeclareMathOperator{\Sym}{Sym}
\DeclareMathOperator{\Div}{Div} \DeclareMathOperator{\Pic}{Pic}
\DeclareMathOperator{\Spec}{Spec}  
 \DeclareMathOperator{\rank}{rank}
\newcommand{\Ab}{\operatorname{ab}}
\newcommand{\ab}{{\operatorname{ab}}}
\newcommand{\isom}{\simeq}
\numberwithin{equation}{section}
\newtheorem{theorem}[equation]{Theorem}
\newtheorem{lemma}[equation]{Lemma}
\newtheorem{corollary}[equation]{Corollary}
\newtheorem{proposition}[equation]{Proposition}
\theoremstyle{definition}
\newtheorem{definition}[equation]{Definition}
\newtheorem{example}[equation]{Example}
\theoremstyle{remark}
\newtheorem{remark}[equation]{Remark}
 \newcommand{\marg}[1]{}
\newcommand{\Supp}{\operatorname{Supp}}
\newcommand{\OK}{\calO_{K_{\pp}}}
\newcommand{\omX}{\omega_{\pi}}
\newcommand{\homX}{H^0(\scrX,\omX)}
\begin{document}

\title[The Chabauty-Coleman bound at a prime of bad reduction and Clifford bounds for Geometric Rank Functions]
{The Chabauty-Coleman bound at a prime of bad reduction and Clifford bounds for Geometric Rank Functions}
\author{Eric Katz}
\address{Department of Combinatorics \& Optimization, University of Waterloo, 200 University Avenue West, Waterloo, ON, Canada N2L 3G1} \email{eekatz@math.uwaterloo.ca}
\author{David Zureick-Brown}
\address{Dept. of Mathematics and Computer science, Emory University,
Atlanta, GA 30322 USA}
\thanks{The first author was partially supported by a National Sciences and Engineering Research Council of Canada grant.  The second author was partially supported by a National Defense Science and
Engineering Graduate Fellowship and by a National Security Agency Young Investigator grant.}
\urladdr{http://mathcs.emory.edu/\~{}dzb/}

\date{\today}

\begin{abstract}
    Let $X$ be a curve over a number field $K$ with genus $g \geq 2$, $\pp$ a prime of $\calO_K$ over an unramified rational prime $p > 2r$, $J$ the Jacobian of $X$, $r = \rank J(K)$, and $\scrX$ a regular proper model of $X$ at $\pp$. Suppose $r < g$. We prove that $\#X(K) \leq \#\scrX(\F_{\pp}) + 2r$, extending the refined version of the Chabauty-Coleman bound to the case of bad reduction. The new technical insight is to isolate variants of the classical rank of a divisor on a curve which are better suited for singular curves and which satisfy Clifford's theorem.
\\

\end{abstract}

\maketitle


\newcommand{\Xp}{X_{\pp}}

\section{Introduction}
\label{S:introduction}    Let $K$ be a number field and $X/K$ be a
    curve (i.e. a smooth geometrically integral 1-dimensional variety) of genus
    $g \geq 2$ and let $p$ denote a prime which is unramified in $K$. 
    Faltings' \cite{faltings}, Vojta's \cite{Vojta}, and Bombieri's \cite{Bomb} 
    proofs of the Mordell Conjecture tell us that $X(K)$ is finite, but are ineffective, providing no assistance 
    in determining $X(K)$ explicitly for a specific curve. Chabauty \cite{chab}, 
    building on an idea of Skolem \cite{sko34}, gave a partial proof of the Mordell Conjecture
    under the hypothesis that the rank $r$ of the Jacobian of $X$ is strictly less than the   
    genus $g$. Coleman later realized that Chabauty's proof could be
    modified to get an explicit upper bound for $\#X(K)$.
    \begin{theorem}
      [\cite{Cole_eff}]
      \label{T:Coleman}
       Suppose $p > 2g$ and let $\pp \subset \calO_K$ be a prime of good
      reduction which lies above $p$. Suppose $r < g$. Then
      $$\#X(K) \leq \#X(\F_{\pp}) + 2g - 2.$$
    \end{theorem}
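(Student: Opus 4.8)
\emph{Proof sketch (Chabauty--Coleman method).}
The plan is to combine Chabauty's $p$-adic method with Coleman's effective refinement. If $X(K)=\emptyset$ there is nothing to prove, so fix $P_0\in X(K)$ and let $\iota\colon X\hookrightarrow J$ be the Abel--Jacobi embedding with $\iota(P_0)=0$. Write $K_\pp$ for the completion of $K$ at $\pp$, with valuation ring $\OK$, a uniformizer $\varpi$, residue field $\F_\pp$, and normalized valuation $v$, and regard $J(K_\pp)$ as a compact $p$-adic Lie group. Coleman's theory of $p$-adic integration provides a $K_\pp$-bilinear map
\[
H^0(J_{K_\pp},\Omega^1)\times J(K_\pp)\longrightarrow K_\pp,\qquad (\eta,D)\longmapsto \int_0^D\eta,
\]
which is a group homomorphism in $D$ and satisfies $\int_0^{\iota(Q)-\iota(P)}\eta=\int_P^Q\iota^*\eta$ for $P,Q\in X(K_\pp)$. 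Since $J(K)$ has rank $r$, its image under $D\mapsto\bigl(\eta\mapsto\int_0^D\eta\bigr)$ generates a $K_\pp$-subspace of $H^0(J_{K_\pp},\Omega^1)^{\vee}$ of dimension at most $r$; because $r<g=\dim H^0(J_{K_\pp},\Omega^1)$, the annihilator of that subspace contains a nonzero form $\eta$, and $\omega:=\iota^*\eta$ is then a nonzero element of $H^0(X_{K_\pp},\Omega^1)$ with $\int_P^Q\omega=0$ for all $P,Q\in X(K)$.

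Next I would pass to the residue disks of a smooth model. Let $\scrX/\OK$ be the smooth proper model of $X$ at $\pp$. Because $H^0(\scrX,\Omega^1_{\scrX/\OK})$ is free of rank $g$ over $\OK$, after replacing $\omega$ by $\varpi^m\omega$ for a suitable $m\in\Z$ we may assume $\omega\in H^0(\scrX,\Omega^1_{\scrX/\OK})$ has nonzero reduction $\bar\omega\in H^0(X_{\F_\pp},\Omega^1)$. For each $\bar P\in X(\F_\pp)$ let $D_{\bar P}=\{Q\in X(K_\pp): Q \text{ reduces to } \bar P\}$ be the corresponding residue disk; a local coordinate $t$ on $\scrX$ at $\bar P$ identifies $D_{\bar P}$ with the maximal ideal $\mathfrak m\subset\OK$ and lets us write $\omega=\bigl(\sum_{i\geq0}a_it^i\bigr)\,dt$ with $a_i\in\OK$ and $a_i\to0$. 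Setting $n_{\bar P}:=\ord_{\bar P}\bar\omega$, we have $v(a_i)>0$ for $i<n_{\bar P}$ and $v(a_{n_{\bar P}})=0$. On $D_{\bar P}$ the Coleman integral of $\omega$ is the formal antiderivative $F(t)=\sum_{i\geq0}\frac{a_i}{i+1}t^{i+1}$, and the vanishing $\int_P^Q\omega=0$ shows that $X(K)\cap D_{\bar P}$ lies in the zero set of $F+C_{\bar P}$ for some constant $C_{\bar P}\in K_\pp$ (and is empty if $D_{\bar P}$ contains no $K$-point).

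Finally I would count the zeros disk by disk with a Newton polygon argument. Since $n_{\bar P}\leq\deg\divv\bar\omega=2g-2$ and $p>2g$, every $i+1$ with $i\leq n_{\bar P}$ is a $p$-adic unit, and Coleman's zero-counting lemma (which applies precisely under such a hypothesis on $p$) then shows that $F+C_{\bar P}$ has at most $n_{\bar P}+1$ zeros in $\mathfrak m$, so $\#\bigl(X(K)\cap D_{\bar P}\bigr)\leq n_{\bar P}+1$. Summing over $\bar P\in X(\F_\pp)$ and using that $\bar\omega$ is a nonzero global differential on the smooth genus-$g$ curve $X_{\F_\pp}$, so that $\sum_{\bar P}n_{\bar P}\leq\deg\divv\bar\omega=2g-2$, yields
\[
\#X(K)\;\leq\;\sum_{\bar P\in X(\F_\pp)}\bigl(n_{\bar P}+1\bigr)\;\leq\;\#X(\F_\pp)+2g-2 .
\]
The main obstacle is the $p$-adic analytic input: constructing (or citing) Coleman integration with its linearity, its homomorphism property in $D$, and its compatibility with Abel--Jacobi, and establishing the zero-counting lemma — this is exactly where the hypothesis $p>2g$ enters, to prevent the ``wild'' denominators $i+1$ (for large $i$) from producing extra zeros. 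By comparison, the dimension bound on the image of $J(K)$ and the integral structure on $H^0(\scrX,\Omega^1_{\scrX/\OK})$ are routine.
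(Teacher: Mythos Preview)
Your proof proposal is correct and follows essentially the same approach as the paper's exposition of the Chabauty--Coleman method in Section~\ref{S:method} (the paper itself cites Theorem~\ref{T:Coleman} from \cite{Cole_eff} rather than proving it, but Subsection~\ref{ss:main} recovers it as the good-reduction special case of Theorem~\ref{T:PMC}). The paper packages the Newton-polygon step via the function $\delta(v,n)$ of Proposition~\ref{P:residueBound} and observes that $p>2g$ (with $e=1$) forces $\delta=0$, which is exactly your observation that $n_{\bar P}+1\le 2g-1<p$ makes the relevant denominator a unit; otherwise the structure---annihilating differential, normalization to an integral form with nonzero reduction, disk-by-disk zero count, and the global bound $\sum n_{\bar P}\le 2g-2$---is identical.
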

    In \cite{LT_thue}, the authors ask if one
    can refine Coleman's bound when the rank is small (i.e. $r \leq
    g-2$). Stoll proved that by choosing, for each residue class, the `best' differential
    one can indeed refine the bound.
    \begin{theorem} [\cite{stoll_indep}*{Corollary 6.7}]       
     \label{T:stollthm}
      With the hypothesis of Theorem \ref{T:Coleman},
      \[
        \#X(K) \leq \#X(\F_{\pp}) + 2r.
      \]
    \end{theorem}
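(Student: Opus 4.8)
The plan is to run Chabauty's $p$-adic method in Coleman's effective form, but --- following Coleman's original disk-by-disk analysis --- to bound the points in each residue disk using the \emph{best} vanishing differential for that disk; the only new ingredient beyond Theorem \ref{T:Coleman} is a Clifford-type estimate for the reduction mod $\pp$ of the space of vanishing differentials. If $X(K)=\emptyset$ there is nothing to prove, so fix $b\in X(K)$ and let $\iota\colon X\to J$, $P\mapsto[P-b]$, be the resulting embedding. Write $\overline{J(K)}$ for the $\pp$-adic closure of $J(K)$ in $J(K_\pp)$; it is a $\pp$-adic Lie group of dimension at most $r$, so the image of its logarithm in $\Lie J_{K_\pp}\cong H^0(X_{K_\pp},\Omega^1)^\vee$ spans a subspace of dimension $\le r$, and
\[
V:=\{\,\omega\in H^0(X_{K_\pp},\Omega^1):\text{$\int_\gamma\omega=0$ for all $\gamma\in\overline{J(K)}$}\,\}
\]
has $\dim_{K_\pp}V\ge g-r\ge 1$. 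For each $\omega\in V$ the Coleman integral $\lambda_\omega(z)=\int_b^z\omega$ is a locally analytic function on $X(K_\pp)$ that vanishes on all of $X(K)$, since $\lambda_\omega(P)=\langle\log\iota(P),\omega\rangle$ with $\iota(P)\in J(K)$.

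Next I would pass to the special fibre. Since $\pp$ is a prime of good reduction, $\scrX$ is smooth and proper over $\calO_{K_\pp}$, so $\scrX(\F_\pp)=X(\F_\pp)$ and the reduction map $H^0(\scrX,\Omega^1_{\scrX/\calO_{K_\pp}})\otimes\F_\pp\to H^0(X_{\F_\pp},\Omega^1)$ is an isomorphism. Let $V_\calO:=V\cap H^0(\scrX,\Omega^1_{\scrX/\calO_{K_\pp}})$, a free $\calO_{K_\pp}$-module of rank $\dim V$, and let $\bar V:=V_\calO\otimes\F_\pp$; then $\bar V$ injects into $H^0(X_{\F_\pp},\Omega^1)$ and is thus a linear subsystem of the canonical system of the genus-$g$ curve $X_{\F_\pp}$ with $\dim\bar V=\dim V\ge g-r$. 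For $Q\in X(\F_\pp)$ set $m_Q:=\min_{0\ne\bar\omega\in\bar V}\ord_Q(\bar\omega)$ and $D_0:=\sum_{Q\in X(\F_\pp)}m_Q\,Q$, an effective divisor. Every nonzero $\bar\omega\in\bar V$ has $\divv(\bar\omega)\ge D_0$, so $\bar V\subseteq H^0\big(X_{\F_\pp},\Omega^1(-D_0)\big)=H^0(X_{\F_\pp},K-D_0)$ with $K$ a canonical divisor.

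The proof is then finished by two estimates. \emph{Local estimate:} given $Q\in X(\F_\pp)$, choose $\bar\omega_Q\in\bar V$ with $\ord_Q(\bar\omega_Q)=m_Q$ and lift it to $\omega_Q\in V_\calO\subseteq V$; in a residue-disk coordinate $t$ at $Q$ one has $\omega_Q=w(t)\,dt$ with $w\in\calO_{K_\pp}[[t]]$ whose reduction $\bar w$ is nonzero with $\ord_{t=0}\bar w=m_Q$, and Coleman's bound on the number of zeros of a $p$-adic antiderivative in a residue disk gives $\#\big(X(K)\cap D_Q\big)\le 1+m_Q$ (this is where $p$ must not be too small compared with $m_Q$). \emph{Global estimate:} $K-D_0$ is effective because $\bar V\ne 0$, and special because $h^1(K-D_0)=h^0(D_0)\ge 1$, so Clifford's theorem gives $\dim\bar V\le h^0(K-D_0)\le\tfrac12\deg(K-D_0)+1=g-\tfrac12\deg D_0$; together with $\dim\bar V\ge g-r$ this forces $\deg D_0=\sum_Q m_Q\le 2r$. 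Summing the local estimate over the finitely many residue disks,
\[
\#X(K)\;\le\;\sum_{Q\in X(\F_\pp)}(1+m_Q)\;=\;\#X(\F_\pp)+\deg D_0\;\le\;\#X(\F_\pp)+2r.
\]

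The conceptual core --- and the point where the bound improves on Theorem \ref{T:Coleman}, which uses a single $\omega$ and only sees $\deg K=2g-2$ --- is the global estimate: the reductions of the vanishing differentials form a \emph{special} linear system, and Clifford's inequality pins its base divisor $D_0$ down to degree $\le 2r$. I expect this to be the main obstacle to get right, along with the bookkeeping that makes $\dim\bar V=\dim V$ survive reduction mod $\pp$; the local estimate is Coleman's original residue-disk lemma, and since $m_Q\le\deg D_0\le 2r<2g<p$ the hypothesis $p>2g$ comfortably supplies the bound on $p$ that it needs.
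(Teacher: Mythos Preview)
Your argument is correct and is essentially the proof the paper attributes to Stoll: you form the Chabauty divisor $D_0=D_{\chab,\pp}$, use Coleman's residue-disk bound to get $\#X(K)\le\#X(\F_\pp)+\deg D_0$, observe that $\bar V\subseteq H^0(K-D_0)$ so $r(K-D_0)\ge g-r-1$, and then apply Clifford's theorem (with Riemann--Roch/Serre duality supplying speciality) to conclude $\deg D_0\le 2r$. The only point to keep straight---and you flagged it---is that the global Clifford estimate is established \emph{independently} of the local disk count, so using $m_Q\le\deg D_0\le 2r<p-1$ to justify $\delta=0$ in Coleman's bound is not circular.
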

    In
    another direction,  Lorenzini and Tucker derive Coleman's bound
    when $\pp$ is a prime of bad reduction;
       a later,  alternative proof using   
    intersection theory on $\scrX$ is given in \cite{PMc_survey}*{Theorem A.5}.
Let $\scrX$ be a minimal regular proper model  of $X$ at $\pp$ and denote by 
    $\scrX_{\pp}^{\text{sm}}$ the smooth locus of $\scrX_{\pp}$. 

    \begin{theorem}[\cite{LT_thue}*{Proposition 1.10}] 
    \label{T:PMC}
      Suppose $p > 2g$ and let $\pp$ be
     a prime above $p$. Let $\scrX$ be a proper regular model of $X$ over 
     $\calO_{K_{\pp}}$. Suppose $r < g$. Then
      $$\#X(K) \leq \#\scrX_{\pp}^{\sm}(\F_{\pp}) + 2g - 2.$$
    \end{theorem}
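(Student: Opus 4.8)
The plan is to run Chabauty's and Coleman's argument directly on the regular model $\scrX$, exploiting the classical fact that a section of a regular arithmetic surface cannot pass through a singular point of the special fiber, so that all of the $p$-adic zero counting takes place inside genuine residue disks lying over $\scrX_\pp^{\sm}(\F_\pp)$. I may assume $X(K)\neq\emptyset$, since otherwise the bound is vacuous; fix a basepoint $b\in X(K)$ and base change everything to $K_\pp$, writing $J=\Jac X$ (so $\dim J=g$). Chabauty's input is that the $p$-adic closure $\overline{J(K)}$ of the Mordell--Weil group inside the $p$-adic Lie group $J(K_\pp)$ has dimension at most $r$; since $r<g$, the image of $\overline{J(K)}$ under the $p$-adic logarithm $\log_J\colon J(K_\pp)\to \Lie(J_{K_\pp})\isom H^0(X_{K_\pp},\Omega^1)^{\vee}$ spans a subspace of dimension $\le r$, so its annihilator $V\subseteq H^0(X_{K_\pp},\Omega^1)$ satisfies $\dim_{K_\pp}V\ge g-r\ge 1$. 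Via the Abel--Jacobi embedding $\iota_b\colon X\injects J$ and the compatibility of the Coleman integral with $\log_J$ under $\iota_b$, every $\omega\in V$ has $\int_b^x\omega=0$ for all $x\in X(K)$. Fixing a nonzero $\omega\in V$, it remains to bound $\#Z_\omega$ where $Z_\omega:=\{x\in X(K_\pp):\int_b^x\omega=0\}\supseteq X(K)$.

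The second step decomposes $Z_\omega$ into residue disks. By properness of $\scrX$ and the valuative criterion, $X(K_\pp)=\scrX(\OK)$, and reduction modulo $\pp$ gives a map $\red\colon\scrX(\OK)\to\scrX_\pp(\F_\pp)$ whose image lies in $\scrX_\pp^{\sm}(\F_\pp)$, precisely because $\scrX$ is regular. For $\bar x\in\scrX_\pp^{\sm}(\F_\pp)$, smoothness of $\scrX\to\Spec\OK$ at $\bar x$ together with Hensel's lemma identifies the residue disk $D_{\bar x}:=\red^{-1}(\bar x)$ with the open unit disk in $K_\pp$ via a local coordinate $t$, and $Z_\omega=\bigsqcup_{\bar x\in\scrX_\pp^{\sm}(\F_\pp)}\bigl(Z_\omega\cap D_{\bar x}\bigr)$.

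Third, I carry out Coleman's zero counting on each disk. After a suitable $\OK$-scaling I may write $\omega|_{D_{\bar x}}=w_{\bar x}(t)\,dt$ with $w_{\bar x}\in\OK[[t]]$ of nonzero reduction, so that $F_{\bar x}(t):=\int w_{\bar x}(t)\,dt\in K_\pp[[t]]$ is the local Coleman integral; a standard Newton-polygon estimate, insensitive to the constant of integration, then bounds $\#(Z_\omega\cap D_{\bar x})$ by $1+n_{\bar x}$, where $n_{\bar x}:=\ord_{t=0}(\overline{w_{\bar x}})$. Granting the bound $n_{\bar x}\le 2g-2$ established in the last step, the hypothesis $p>2g$ gives $n_{\bar x}+1<p$, so the denominators $1/(i+1)$ occurring in $F_{\bar x}$ are units and do not disturb the count — the same inequality also underlies convergence of $\log_J$. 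Summing over residue disks yields $\#X(K)\le\#\scrX_\pp^{\sm}(\F_\pp)+\sum_{\bar x}n_{\bar x}$.

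The remaining point, which I expect to be the crux, is the bound $\sum_{\bar x\in\scrX_\pp^{\sm}(\F_\pp)}n_{\bar x}\le 2g-2$. Choosing $\omega$ from the outset to be a generator of the free $\OK$-module $\homX$ that is not divisible by $\pp$ — possible because $\homX$ is free of rank $g$ with reductions $H^0(X_{K_\pp},\Omega^1)$ and $H^0(\scrX_\pp,\omX|_{\scrX_\pp})$ — its reduction $\overline\omega$ is a nonzero global section of the relative dualizing line bundle $\omX|_{\scrX_\pp}$, whose total degree over all components of $\scrX_\pp$ is $2g-2$ by flatness of $\omX$; then $\sum_{\bar x}n_{\bar x}=\sum_{\bar x\in\scrX_\pp^{\sm}}\ord_{\bar x}(\overline\omega)\le\deg\divv(\overline\omega)=2g-2$. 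The delicate case is when $\overline\omega$ vanishes identically on some irreducible component $C$ of $\scrX_\pp$: then the local expansions over $C^{\sm}$ force a further per-disk rescaling of $\omega$ and the naive degree count breaks, so one must instead control $\sum_{\bar x}n_{\bar x}$ by an intersection-theoretic argument on the arithmetic surface $\scrX$, tracking how the components of $\scrX_\pp$ and the horizontal divisor of $\omega$ meet, as in the alternative proof of \cite{PMc_survey}*{Theorem A.5}. This is where the geometry of bad reduction genuinely enters; the $p$-adic analysis is otherwise formally identical to the good-reduction case.
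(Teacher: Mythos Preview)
Your proposal is correct and follows essentially the same approach as the paper's recap in Subsection~\ref{ss:main}: pick a nonzero $\omega\in V_{\chab}$, bound each residue disk by $1+n(\omega,\widetilde Q)$ via Newton polygons (using $p>2g$ to make the correction term $\delta$ vanish), and then bound $\sum_{\widetilde Q} n(\omega,\widetilde Q)\le\deg\divv\omega=2g-2$. The paper absorbs your ``delicate case'' by building the per-component rescaling directly into its definition of $n(\omega,\widetilde Q)$, after which the sum is controlled by the intersection of the horizontal part of $\divv(\omega)$ with the special fiber---exactly the intersection-theoretic step you flag and that \cite{LT_thue} and \cite{PMc_survey}*{Theorem~A.5} carry out.
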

    
 \subsection{Main Result} 


Lorenzini and Tucker ask \cite{LT_thue}*{comment after 1.11} if one can generalize Theorem
    \ref{T:stollthm} to the case when $\pp$ is a prime of bad
    reduction; Michael Stoll remarks \cite{stoll_indep}*{Remark 6.5}
    that the analogue of Theorem \ref{T:stollthm} is true at least for a hyperelliptic
    curve. The
    main result of this paper is such a generalization.
   \begin{theorem}
     \label{T:mainthm}
    Let $X$ be a curve over a number field $K$. Suppose $p > 2r+2$ is a prime which is unramified 
    in $K$  and let $\pp \subset \calO_K $ be a prime above $p$. Let $\scrX$ be a proper regular
     model of $X$ over $\calO_{K_{\pp}}$.
Suppose moreover that $r < g$. Then
     \[
	     \#X(K) \leq \#\scrX_{\pp}^{\sm}(\F_{\pp}) + 2r.     
     \]
    \end{theorem}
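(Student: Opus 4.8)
The plan is to run the Chabauty--Coleman method behind Theorems~\ref{T:Coleman} and~\ref{T:stollthm} directly on the regular model $\scrX$, replacing the smooth special fiber by $\scrX_\pp$ and the module of differentials by $\homX$, the global sections of the relative dualizing sheaf $\omX$. First I would observe that, since $\scrX$ is proper and regular over $\OK$, the valuative criterion extends every point of $X(K) \subseteq X(K_\pp)$ to a unique section $\Spec \OK \to \scrX$, and that such a section can pass neither through a singular point of $\scrX_\pp$ (there are none lying over singular points of $\scrX$, and a short valuation computation rules out nodes and tangencies of components) nor through a point on a component of multiplicity $>1$. Hence reduction gives a well-defined map $X(K) \to \scrX_\pp^\sm(\F_\pp)$, and over each $Q$ in its image the residue disk is an honest open $p$-adic disk. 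It therefore suffices to bound $\#\{x \in X(K): x \text{ reduces to } Q\}$ for each such $Q$.

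\textbf{Chabauty differentials and the disk count.} Next I would produce the Chabauty subspace. The $\OK$-module $\homX$ is free of rank $g$, and its reduction injects into $H^0(\scrX_\pp, \omX|_{\scrX_\pp})$; here $\omX|_{\scrX_\pp}$ is a line bundle of degree $2g-2$, because $\scrX_\pp$ is a Cartier divisor in the regular scheme $\scrX$ and hence Gorenstein. Coleman integration of the elements of $\homX$ against $J(K)$, together with the hypothesis $r < g$, yields a subspace $V \subseteq \homX \otimes K_\pp$ of dimension $\geq g - r$, every element of which kills all points of $X(K)$ in every residue disk; scaling a basis into $\homX$ and reducing gives $\overline V \subseteq H^0(\scrX_\pp, \omX|_{\scrX_\pp})$ with $\dim \overline V \geq g - r$. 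For $Q$ in the image of reduction, let $n_Q$ be the least order of vanishing at $Q$ of a nonzero element of $\overline V$; lifting an element of $\overline V$ with order exactly $n_Q$ to some $\omega \in V \cap \homX$, and using that $\pp$ is unramified (so the ramification index $e=1 < p-1$) together with the fact that $p$ is large relative to $n_Q$ — guaranteed once we know $n_Q \leq 2r$ and $p > 2r+2$ — the usual Newton-polygon estimate shows the Coleman integral of $\omega$ has at most $n_Q+1$ zeros in the disk above $Q$. Thus $\#X(K) \leq \#\scrX_\pp^\sm(\F_\pp) + \sum_Q n_Q$, and everything reduces to proving $\sum_Q n_Q \leq 2r$.

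\textbf{A Clifford bound on the singular fiber.} Set $B = \sum_Q n_Q\, Q$, the base divisor of $\overline V$ along $\scrX_\pp^\sm$, so that $\overline V \subseteq H^0\bigl(\scrX_\pp, \omX|_{\scrX_\pp}(-B)\bigr)$ and $\omX|_{\scrX_\pp}(-B)$ is a subsheaf of the dualizing sheaf of degree $2g-2-\deg B$. On a smooth curve, Clifford's theorem applied to this sheaf gives at once $\deg B \leq 2\bigl(g - \dim\overline V\bigr) \leq 2r$. The difficulty is that $\scrX_\pp$ is typically reducible and non-reduced, and the inequality $h^0(L_1)+h^0(L_2) \leq h^0(L_1\otimes L_2)+1$ underlying the classical proof of Clifford's theorem fails for such curves (a line bundle can be arbitrarily positive on one component and negative on another). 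The new ingredient is to replace $h^0(L)-1$ by a \emph{geometric rank function} $r_{\chab}(L)$ on $\scrX_\pp$, designed so that (i) it is still large enough to control the residue-disk count, concretely $r_{\chab}\bigl(\omX|_{\scrX_\pp}(-B)\bigr) \geq \dim\overline V - 1$, and (ii) it satisfies a Clifford inequality $r_{\chab}(L) \leq \tfrac12\deg L$ for line bundles dominated by the dualizing sheaf, the mechanism for (ii) being a specialization inequality relating $r_{\chab}$ to the combinatorial Baker--Norine rank on the dual graph of $\scrX_\pp$, which does obey its own Riemann--Roch theorem and hence Clifford. Granting this, $\dim\overline V - 1 \leq r_{\chab}\bigl(\omX|_{\scrX_\pp}(-B)\bigr) \leq g - 1 - \tfrac12\deg B$, so $\deg B \leq 2(g-\dim\overline V) \leq 2r$; since the $Q$ are distinct this is exactly $\sum_Q n_Q \leq 2r$, and each $n_Q \leq 2r < p-2$ retroactively justifies the disk count above. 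Assembling the pieces gives $\#X(K) \leq \#\scrX_\pp^\sm(\F_\pp) + 2r$.

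\textbf{Main obstacle.} The genuinely new work is the third step: isolating a rank function on divisors of the singular curve $\scrX_\pp$ that simultaneously bounds the number of points one can impose as zeros of Coleman integrals in a residue disk and satisfies Clifford's inequality, and proving the specialization comparison with the dual-graph rank that makes the Clifford bound available. Once the correct notion of geometric rank and its Clifford bound are in hand, the remainder is a careful transcription of the Chabauty--Coleman--Stoll argument from the good-reduction to the bad-reduction setting.
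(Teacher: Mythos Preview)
Your first two steps are essentially correct and match the paper's setup (Sections~2.1--2.5): reduction lands in $\scrX_\pp^{\sm}(\F_\pp)$, the Chabauty space $V_{\chab}$ has dimension at least $g-r$, the Chabauty divisor $B=D_{\chab,\pp}=\sum n_Q\,Q$ controls the residue-disk count, and the Newton-polygon estimate closes once $\deg B\le 2r$.

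The genuine gap is in your third step, and it is exactly the difficulty the paper is written to overcome. You propose to obtain the Clifford inequality for your rank function by a specialization comparison to the Baker--Norine rank on the dual graph $\Gamma$ of $\scrX_\pp$. But Baker--Norine's Riemann--Roch and Clifford theorem are formulated with respect to the canonical divisor $K_\Gamma$ of the \emph{graph}, which has degree $2g(\Gamma)-2$, not the specialization of $K_{\scrX_\pp}$, which has degree $2g-2$; one has $\mdeg(K_{\scrX_\pp})=K_\Gamma+\sum_v 2g(X_v)(v)$. When some component $X_v$ has positive genus these differ, and the bound one extracts from Baker--Norine alone is only $\deg B\le 2r+2\bigl(g-g(\Gamma)\bigr)$, not $2r$. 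The paper flags this explicitly (see the remark following Theorem~\ref{T:cliffordGraph}); the pure dual-graph approach works only in the totally degenerate case.

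What the paper actually does is introduce an intermediate rank $r_{\Ab}$ that combines the combinatorics of the dual graph with the algebraic geometry of the components, and proves a new Clifford bound $r_{\Ab}(K_{\scrX_\pp}-D_\pp)\le g-\tfrac{1}{2}\deg D_\pp-1$ (Proposition~\ref{p:clifford}). The proof uses Baker--Norine Clifford on $\Gamma$ only as one ingredient; it also invokes the classical Clifford and Riemann--Roch theorems component by component, together with a finiteness argument for twists and a ``twist general position'' construction to pick an effective divisor $E_\pp$ that kills sections on some component for every twist. Two further points you gloss over: the paper first passes to a finite extension to obtain a regular \emph{semistable} model (Section~\ref{S:finalTouch}), checking that $\deg D_{\chab,\pp}$ can only grow; and the lower bound analogous to your (i) is not quite $r_{\Ab}(K_{\scrX_\pp}-B)\ge g-r-1$ directly, but is established in its ``avoidance'' form (Remark~\ref{r:clifford} and the proof of Proposition~\ref{P:mainSemistable}), precisely because reductions of Chabauty differentials may vanish along entire components and must be handled via twists.
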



    \begin{remark}
The condition that $p > 2r+2$ ensures that the bounds one gets from the use of Newton Polygons in Chabauty's method are as sharp as possible; 
using Proposition \ref{P:residueBound}, one can write out weaker, but still explicit (in terms of $g$
    and $p$), bounds when $p \leq 2r+2$ or when $p$ ramifies in $K$ (see any of \cite{Cole_eff},
    \cite{stoll_indep}, or \cite{LT_thue}). 
    \end{remark}

    \begin{remark}
      When $\scrX_{\pp}$ is singular or is not hyperelliptic, one can often do
      better than $2r$; see Subsection \ref{S:remarks}.
    \end{remark}


\begin{remark}
    The charm of these theorems is that they occasionally allow one to compute $X(K)$; see \cite{grant} for the first such example and Example \ref{ex:sharp} for another which uses the refined bound of Theorem \ref{T:mainthm} at a prime of bad reduction.

Example \ref{ex:sharp} is of course quite special, in that most of the time the bound of Theorem \ref{T:mainthm} is not sharp. Chabauty's method gives a way to bound the number of rational points in each residue class (see Proposition \ref{P:residueBound}); however, this bound is never less than 1, and yet some residue classes may not have rational points in them. In practice, one applies Chabauty's method to each residue class. If a given residue class seems to have no rational points, a variety of methods usually allow one to prove that this is correct (the most common being Scharaschkin's method, often called the Mordell-Weil sieve); if a given residue class does have a point,  then one applies Chabauty's method.

There are a wealth of interesting examples where a more careful analysis of Chabauty's method allows one to determine $X(K)$. Worth noting are the works \cite{bruin_chab} and \cite{pss}, where the integral coprime solutions of the generalized Fermat equations $x^2 + y^8 =z^3$ and $x^2 + y^3 = z^7$ (both of which have large, non-trivial solutions) are completely determined by reducing to curves and using Chabauty methods. 
\vspace{3pt}

    For low genus hyperelliptic curves Chabauty's method has been made completely explicit and even implemented in \cite{Magma}, so that for any particular low genus hyperelliptic curve $X$ over $\Q$, $X(\Q)$ can often be determined; see the survey \cite{poonen_survey} for a general discussion of computational issues, \cite{PMc_survey}*{Section 7} for a discussion of effectivity of Chabauty's method, and the online documentation \cite{Magma} for many details. 
See \cite{stoll:rationalPointsSurvey} for the most up to date survey on such issues -- \cite{stoll:rationalPointsSurvey}*{4.2} gives a quick summary of the interplay between the Mordell-Weil Sieve and Chabauty's method and how to handle residue classes which are empty, and  \cite{bruinS:MWSieve} and \cite{wee} give a much more  detailed explanation.


\end{remark}

\begin{remark}
	The $p=2r+2$ case of Theorem \ref{T:mainthm} (i.e. $p = 2$ and $r = 0$) is \cite{LT_thue}*{Proposition 1.10}.
\end{remark}
        
 \subsection{New ideas -- geometric rank functions} 

Recall the classical notion of the rank of a divisor $D$ on a smooth projective
geometrically integral curve $X$ over an infinite field.
\begin{definition}
\label{d:classicalRank}

We say that the \defi{rank} $r_X(D)$ of a divisor $D \in \Div X$ is $-1$ if $D$ is not equivalent to an effective divisor, and we say that a divisor $D$ which is equivalent to an effective divisor has rank $r_X(D) = r$ 
if for any effective divisor $E$ of degree $r$ on $X$, $D-E$ is equivalent to an effective divisor and moreover $r$ is the largest integer with this property (i.e. there is an effective divisor $E$ on $X$ of degree $r+1$ for which $D-E$ is not equivalent to an effective divisor).
Alternatively, let 
\[
r(D) = \dim_K H^0(X,\calO_{X}(D))-1 =  \dim|D|, \text{ where } 	|D|
= \P(H^0(X,\calO_{X}(D)))
\]
($|D|$ is the linear system of effective divisors equivalent to $D$).
\end{definition}
Since $X$ is smooth, projective,  geometrically integral, and defined over an infinite field, the functions $r_X$ and $r$ are equal. Moreover, $r_X(D)$ satisfies \emph{Clifford's Theorem}
\cite{hart}*{Chapter III, Theorem 5.4}: if $D$
is a \emph{special} divisor (i.e. $D$ and $K_X-D$ are each equivalent to an
effective divisor, where $K_X$ is the canonical divisor of $X$), then
$r_X(D) \leq \frac{1}{2}\deg D$. 
\vspace{4pt}

Now let $X$ be the curve from the setup of Theorem \ref{T:stollthm}. Stoll's proof of Theorem
\ref{T:stollthm} uses Clifford's theorem and Riemann-Roch in an
essential way -- he constructs a special divisor $D_{\chab,\pp}$ on the (smooth) special fiber $\scrX_\pp$ of $X$ (see Subsection \ref{ss:chabautyDivisor}) such that
\begin{itemize}
\item [(i)]   $\#X(K) - \#X(\F_{\pp}) \leq \deg D_{\chab,\pp}$, and
\item [(ii)] $r(K_X-D_{\chab,\pp}) \geq g-r-1$
\end{itemize}
and deduces that $\deg D_{\chab,\pp} \leq 2r$ from Riemann-Roch and Clifford's theorem.
 \vspace{3pt}

 When the special fiber $\scrX_\pp$ has multiple components, it
 is no longer true that the ranks $r_X(D)$ and $r(D)$ agree for divisors $D$ on $\scrX_{\pp}$, since a section $s \in H^0(\scrX_{\pp},\calO_{\scrX_{\pp}}(D))$ may
 vanish along some component of $\scrX_{\pp}$; in general $r_X(D) < \dim_K  H^0(\scrX_{\pp},\calO_{\scrX_{\pp}}(D))$. 
When $\scrX$ is regular, $r(D)$ still satisfies Riemann-Roch \cite{liu}*{Theorem 7.3.26}, and in an
earlier version of this paper we gave a direct proof of Clifford's
theorem for $r(D)$. However, the divisor $D_{\chab,\pp}$ to which we want to apply Clifford's theorem
is no longer special in the usual sense; moreover, when $\scrX_{\pp}$
has multiple components, it does \emph{not} follow from Chabauty's
method that 
$r(K_{\scrX_{\pp}}-D_{\chab,\pp}) \geq g-r-1$ 
(compare  with Subsection \ref{ss:upperBounds}).
\\

 \textbf{Modified rank functions}: In Subsection \ref{ss:rankFunctions}, motivated by the canonical
 decomposition of the special fiber of the N\'{e}ron model of the
 Jacobian of  $X$ 
as successive extensions of Abelian, toric, unipotent, and discrete algebraic groups
(see Subsection \ref{ss:motivation}), we define rank functions
  $r_{\num},r_{\Ab},r_{\toric}, r_X$ which satisfy  $r_{\num}(D) \geq r_{\Ab}(D) \geq r_{\toric}(D) \geq  r_X(D)$. (Our variant of $r_X$ agrees with the one defined above when $X$ has good reduction; moreover $r_X = r$ (Proposition \ref{L:upperBounds}).)



To take into account that sections can vanish along components of
$\scrX_{\pp}$, these functions instead take as input divisors on
$\scrX$ which reduce to smooth points of  $\scrX_\pp$.  Moreover, the
modified rank functions should be viewed as successively finer \emph{approximations} to the effectivity of a divisor. 
For example, if a horizontal divisor $\calD$ on $\scrX$ is equivalent to an
effective horizontal divisor $\calE$, then the degree $\deg
(\calE|_{X_v})$ of the
restriction of $\calE$ to each component $X_v$ of $\scrX_{\pp}$ is
non-negative. So a \emph{necessary} condition  for a horizontal divisor $\calD$ on $\scrX$ to be equivalent to an
effective horizontal divisor $\calE$ is that $\deg (\calE|_{X_v}) \geq
0$ for every component $X_v$ of $\scrX_{\pp}$; to define $r_{\num}(\calD)$ we thus modify Definition \ref{d:classicalRank} to say that $r_{\num}(\calD)$ is at least $r$ if for every horizontal divisor $\calE$ of degree $r$, $\calD-\calE$ has non-negative degree on \emph{every} component $X_v$ of $\scrX_{\pp}$, and that $r_{\num}(\calD) = -1$ if $\deg (\calD|_{X_v}) <
0$ for \emph{some} component $X_v$ of $\scrX_{\pp}$ . (Actually, we
also allow \emph{twisting} -- i.e. modification by divisors $\sum
a_vX_v$ supported on the special fiber; see Subsection \ref{ss:dualGraphs}.)
\\



Our main result about these rank functions
(Proposition \ref{p:clifford}) is that $r_{\ab}$ (and thus
$r_{\toric}(D)$ and $r_X(D)$) satisfies a variant of Clifford's theorem:
\begin{proposition} 
Let $D_\pp$ be an effective divisor on $\scrX_\pp$ and let $K_{\scrX_\pp}$ be a divisor in the class of the canonical bundle.  Suppose that $D_\pp$ and $K_{\scrX_\pp}$ are supported on smooth points.  Then,
\[r_{\Ab}(K_{\scrX_\pp}-D_\pp)\leq g-\frac{\deg{D_\pp}}{2}-1.\]
\end{proposition}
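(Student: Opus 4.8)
The plan is to derive the proposition from a Clifford-type bound for the rank function $r_{\Ab}$ itself, after a harmless reformulation. Set $F := K_{\scrX_\pp} - D_\pp$; since $\deg F = (2g-2) - \deg D_\pp$, the inequality to be proved is literally $r_{\Ab}(F) \le \tfrac12 \deg F$. Because $D_\pp = K_{\scrX_\pp} - F$ is effective, $F$ is ``$\Ab$-special'': both $F$ and $K_{\scrX_\pp}-F$ are $\Ab$-effective; and if $F$ itself is not $\Ab$-effective, or if $\deg D_\pp > 2g-2$, then $r_{\Ab}(F) = -1$ and there is nothing to prove in the range where the proposition is used. So it suffices to prove the following Clifford theorem for $r_{\Ab}$: if $F$ is a divisor supported on smooth points of $\scrX_\pp$ with $F$ and $K_{\scrX_\pp}-F$ both $\Ab$-effective, then $r_{\Ab}(F) \le \tfrac12 \deg F$. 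I would prove this by induction on $\deg F$. The base cases $\deg F \le 1$ are immediate: an $\Ab$-effective class of degree $0$ has $\Ab$-rank $0$ (subtracting a smooth point would produce something $\Ab$-effective of negative degree), and for $\deg F = 1$ a positive $\Ab$-rank would force all smooth points of $\scrX_\pp$ to be $\Ab$-equivalent, which is impossible when $g \ge 2$.

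For the inductive step, two facts would do the work. First, $r_{\Ab}$ can drop by at most one when a smooth point is removed --- $r_{\Ab}(\calD) \ge t$ forces $r_{\Ab}(\calD - P) \ge t - 1$ for every smooth point $P$, directly from the definition of $r_{\Ab}$ in terms of effective test divisors --- and some smooth point must actually realize an equality $r_{\Ab}(F-P) = r_{\Ab}(F) - 1$, since otherwise iterating would keep $r_{\Ab}(\cdot) \ge 1$ while the degree falls below zero. Peeling off such a point handles the even-degree case: the inductive hypothesis applied to $F - P$ gives $r_{\Ab}(F) \le \lfloor (\deg F - 1)/2 \rfloor + 1 = \tfrac12 \deg F$. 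Second, to close the odd-degree case with the sharp constant I would establish the Clifford inequality $r_{\Ab}(F) + r_{\Ab}(K_{\scrX_\pp}-F) \le g - 1$ and pair it with a Riemann--Roch identity $r_{\Ab}(F) - r_{\Ab}(K_{\scrX_\pp}-F) = \deg F + 1 - g$ for $r_{\Ab}$: adding these gives $2\,r_{\Ab}(F) \le \deg F$ and finishes all parities uniformly. The Riemann--Roch identity for $r_{\Ab}$ I would assemble by combining Riemann--Roch for the numerical rank $r_{\num}$ on the dual graph $\Gamma$ of $\scrX_\pp$ (chip-firing, following Baker--Norine) with classical Riemann--Roch on each component $X_v$, exploiting the adjunction relation expressing $K_{\scrX_\pp}|_{X_v}$ as $K_{X_v}$ plus the points where $X_v$ meets the rest of $\scrX_\pp$; the Clifford inequality I would prove by the same peel-and-combine induction, moving the divisor underlying $F$ either by chip-firing along a cycle of $\Gamma$ or by using the positivity of a genuine linear series on a positive-genus component $X_v$.

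The main obstacle, and the technical heart of the argument, is precisely this balancing of combinatorial data on $\Gamma$ against geometric data on the components. The numerical rank $r_{\num}$ by itself does \emph{not} satisfy Clifford's theorem with genus $g$ --- for instance $r_{\num}(K_{\scrX_\pp})$ can exceed $g - 1$, because the multidegree $\mdeg K_{\scrX_\pp}$ carries roughly $2g_v$ extra chips at each vertex $v$ that the chip-firing combinatorics cannot recognize as immovable --- so the proof genuinely needs $r_{\Ab}$ to remember, for each $X_v$, the class of $F|_{X_v}$ in $\Jac X_v$ (the abelian part of the N\'eron model of $J$), and it must show that the classical Clifford deficiency of $F|_{X_v}$ on $X_v$ is not eroded by the chip-firing moves that $r_{\Ab}$ permits. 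Organizing the induction so that the graph moves and the component-wise linear-series moves are carried out compatibly, and so that the constant stays exactly $\tfrac12 \deg F$ regardless of the parity of $\deg F$ and of the combinatorial type of $\scrX_\pp$ (including non-reduced or non-nodal special fibers, absorbed into $\Gamma$ via edge weights), is where the real work lies; in effect one is proving a Clifford theorem for the metrized complex of curves attached to $\scrX_\pp$, in the spirit of the work of Amini and Baker.
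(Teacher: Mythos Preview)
Your reformulation is correct, and the overall strategy you sketch --- deduce the Clifford bound for $r_{\Ab}$ from a Riemann--Roch identity $r_{\Ab}(F) - r_{\Ab}(K_{\scrX_\pp}-F) = \deg F + 1 - g$ --- is a valid route, and indeed is the one the paper explicitly attributes to Amini--Baker in a remark following the proposition. But your proposal does not actually \emph{prove} that Riemann--Roch identity; it only gestures at ``combining Riemann--Roch for $r_{\num}$ on $\Gamma$ with classical Riemann--Roch on each $X_v$.'' That combination is precisely the nontrivial content of the Amini--Baker theorem, and the paper's whole point is to give an independent argument that avoids it. Note too that your peel-one-point induction is not self-standing: it closes only in even degree, while for odd degree you invoke the inequality $r_{\Ab}(F)+r_{\Ab}(K_{\scrX_\pp}-F)\le g-1$, which (given Riemann--Roch) is equivalent to the Clifford bound you are trying to prove. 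So without an honest proof of Riemann--Roch for $r_{\Ab}$, the argument is circular.

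The paper's proof is quite different and entirely constructive: it exhibits, directly, an effective divisor $Q_\pp$ of degree at most $g-\tfrac{1}{2}\deg D_\pp$ with $r_{\Ab}(K_{\scrX_\pp}-D_\pp-Q_\pp)<0$. The construction splits on whether $\deg(K_\Gamma-\mdeg(D_\pp))<0$. In the first case, for every admissible twist $\varphi$ there is at least one component $X_v$ on which $(K_{\scrX_\pp}-D_\pp)(\varphi)$ has degree $<2g(X_v)$ (a ``$\varphi$-Clifford'' vertex); one then chooses a twist $\varphi$ minimizing the number of Clifford vertices and, among those, maximizing the degree on a Clifford vertex, and places on that vertex roughly half that degree in twist-general-position points, plus $g(X_w)$ points on each non-Clifford vertex $w$. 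A short case analysis shows that no twist $\varphi'$ can produce sections on every component after subtracting this $Q_\pp$. In the second case, one first uses Baker--Norine's Clifford theorem on $\Gamma$ to find a small $Q'\in\Div(\Gamma)$ making $K_\Gamma-\mdeg(D_\pp)-Q'$ non-effective on the graph, lifts it, and then adds $g(X_v)$ twist-general-position points on each component. The virtue of this approach is that it uses only classical Clifford/Riemann--Roch on the individual components and Baker--Norine on the graph, never a Riemann--Roch theorem for $r_{\Ab}$ itself; it is also algorithmic, which the paper notes can yield sharper bounds in specific examples than the formal Clifford inequality.
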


With a bit more work, Theorem \ref{T:mainthm} follows (see Section \ref{S:finalTouch}).

\begin{remark}[Comparison with Baker-Norine]

Baker \cite{BakerSpec} defines a notion of linear equivalence, and
thus of rank, for a divisor on a graph. Moreover, one can specialize a
divisor $D$ on a semistable curve to a divisor $\mdeg(D)$  on its dual graph; see Subsections \ref{ss:dualGraphs} and \ref{S:bakerNorineBackground}.

Our rank function $r_{\num}$ agrees with theirs in the sense that $r_{\num}(D) = r(\mdeg(D))$.
 When $\scrX_{\pp}$ is semistable and totally degenerate, $r_{\num}(D) =
  r_{\ab}(D)$, but in general $r_{\num}(D)$ is larger (see Example \ref{E:numAb}). In \cite{BakerN:RR}, Baker
  and Norine prove  an analogue of Riemann-Roch (and thus Clifford's
  theorem) for $r_{\num}(D)$, but this is with respect to the
  canonical divisor $K_{\Gamma}$ of the dual graph. This
  differs from the specialization of the divisor $K_{\scrX_\pp} $ and
  in particular does not take into account the genera of the
  components of $\scrX_{\pp}$. Their theorem thus suffices for
  $\scrX_{\pp}$ semistable and totally degenerate, but not for the general case.


\end{remark}

\begin{remark}
While this preprint was in preparation, we became aware of the work of Amini-Baker on metrized complexes of curves \cite{AB_metrized}.  They introduce a rank function analogous to $r_{\Ab}$ in a very  slightly different context and prove a version of the Riemann-Roch theorem that implies both the Riemann-Roch theorem for $r_{\Ab}$ and our Clifford bounds on $r_{\Ab}$.  Their work also makes connections to the theory of limit linear series.
\end{remark}

 \subsection{Organization of the paper} 

     This paper is structured as follows. In Section \ref{S:method} we review the method 
    of Chabauty and Coleman and present the main argument used to bound $\#X(K)$. 
In Section \ref{S:geometricRank} we introduce the geometric rank functions $r_{\num},r_{\Ab},r_{\toric}, r_X$ and prove Clifford's theorem for $r_{\ab}$ (Proposition \ref{p:clifford}).
In Section \ref{S:finalTouch} we finish the proof of Theorem \ref{T:mainthm}.
    Finally, in Section \ref{S:examples} we give 
   an example where the refined bound of Theorem \ref{T:mainthm} can be used to determine   $X(K)$ and examples where the inequalities $r_{\num}(D) \geq r_{\Ab}(D) \geq r_{\toric}(D) \geq  r_X(D)$ are strict.

 \section{The Method of Chabauty and Coleman}
 \label{S:method}
	In this section we recall the method of Chabauty and Coleman.
	See \cite{PMc_survey} for many references and a more detailed 
	account.\\

	\textbf{Some notation.} Let $K$ be a number field, 
	 $p$ a prime integer and $\pp$ a prime of $K$ above $p$; let
     $v = v_{\pp}$ be the corresponding valuation,  normalized so that the value
     group $v(K)$ is $\Z$. 
 Denote by $\F_\pp$ the residue field. For a scheme $Y$ over $K$ let 
	$Y_{\pp}$ be the extension of scalars $Y \times_K K_{\pp}$, where $K_{\pp}$ is the completion 
	of $K$ at $\pp$. For a scheme $Y$ over a field
	denote by $Y^{\text{sm}}$ its smooth locus. Let $X$ be a smooth projective geometrically 
	integral curve of genus $g \geq 2$ over $K$ with Jacobian $J$ and let $r = \rank J(K)$. 
	Suppose that there exists a rational point $P \in X(K)$ (otherwise the conclusion of 
	Theorem \ref{T:mainthm} is trivially true) and let $\iota\colon X \to J$ be the embedding given by  $Q \mapsto [Q-P$]. 
	
 \subsection{Models and Residue Classes}
	Let $\scrX$ be a proper regular model of $X_{\pp}$ over
	$\calO_{K_{\pp}}$ and denote its special fiber by $\scrX_{\pp}$.  We have the following commutative diagram:
	\begin{equation}\label{E:setup}
	\xymatrix{
    	\scrX_{\pp} \ar[r] \ar[d]^{\pi_{\pp}} & 
	\scrX \ar[d]^\pi & X_{\pp} \ar[l]_{i}\ar[d]^{\pi_{K_{\pp}}}&\\
    	\Spec\F_{\pp} \ar[r] & \Spec\OK & \Spec K_{\pp}.\ar[l]
    	}
	\end{equation}

	Since $\scrX$ is proper, the valuative criterion gives a reduction map
	 \begin{equation}\label{E:reduction}
	     \rho\colon X_{\pp}(K_{\pp}) = \scrX(\calO_{K_{\pp}}) \to \scrX(\F_{\pp}).
	 \end{equation}
	 Alternatively, $\rho$ is given by smearing any $K_{\pp}$-point of $X_{\pp}$ to an
	 $\calO_{K_{\pp}}$-point of $\scrX$  and then intersecting with the
	 special fiber $\scrX_{\pp}$; i.e. $\rho(P) = \overline{\{P\}} \cap 
	 \scrX_{\pp}$, where $\overline{\{P\}}$ is the closure of the point $P$ (note that even though $P$ is a closed point of $X_\pp$ it is not closed on $\scrX$). 
Since $\scrX$ is regular, the image is contained in $\scrX_{\pp}^{\text{sm}}(\F_{\pp})$.
Conversely, by Hensel's lemma any point of $\scrX_{\pp}^{\text{sm}}(\F_{\pp})$ lifts to a point in $X_{\pp}(K_{\pp})$.

	  \begin{definition}	     
	     	For  $\widetilde{Q} \in \scrX_{\pp}^{\text{sm}}(\F_{\pp})$ 
	          we define the \defi{residue class}  (or \defi{tube})
	          $D_{\widetilde{Q}}$ to be the preimage $\rho^{-1}\big(\widetilde{Q}\big)$ 	          
		  of $\widetilde{Q}$ under the reduction map \eqref{E:reduction}.     
	 \end{definition}
	 
	 \begin{definition}
       Let $\widetilde{Q} \in \scrX_{\pp}^{\text{sm}}(\F_{\pp})$ and
       let  $\omega \in H^{0}\big(X_{\pp},\Omega^1_{X_{\pp}/K_{\pp}}\big)$.
	     Choose $t \in K_{\pp}^{\times}$ such that $t\omega$ (considered as a rational section of relative dualizing sheaf of $\scrX$) 
	     has neither a pole nor zero along the entire component $X_v$ of
       $\scrX_\pp$ containing $\widetilde{Q}$ 
(in other words, $t\omega$ is allowed to have poles or zeroes at some
individual points of $X_v$, but is not allowed to be identically zero
or infinite when restricted to $X_v$).  
This is possible 
	     because the relative dualizing sheaf is invertible at the generic points of components of the special fiber containing 
	     smooth points.  Let 
	    $\widetilde{\omega}$ be the reduction of $t \omega$ to $X_v$ and define 	     
	     \[         
                 n\big(\omega,\widetilde{Q}\big) = \ord_{\widetilde{Q}} \widetilde{\omega}.
	     \]     
	 \end{definition}

 \subsection{$p$-adic Integration} 
      For a more leisurely introduction to integration on a $p$-adic curve see \cite{PMc_survey}*{Sections 4 and 5}.	
      For $\omega \in H^{0}\big(X_{\pp},\Omega^1_{X_{\pp}/K_{\pp}}\big)$ 
      let $\eta_{\omega}\colon X(K_{\pp}) \to K_{\pp} $ be the function $Q \mapsto \int_P^Q \omega$.
      The following proposition summarizes relevant results of \cite{LT_thue}*{Section 1}.

\newcommand{\Iw}{I_{\omega,Q}(t)}
      \begin{proposition}
	 Let $\widetilde{Q} \in \scrX_{\pp}^{\text{sm}}(\F_{\pp})$ and $Q \in
	 D_{\widetilde{Q}}$. Let $u \in \calO_{\scrX,Q}$ such that the
	 restriction to $\calO_{\scrX_{\pp},\widetilde{Q}}$ is a uniformizer.
	 Then the following are true.
	 \begin{itemize}
	    \item  [(1)] The function $u$ defines a bijection
	         \[
	            u\colon  D_{\widetilde{Q}}\, \widetilde{\to}\, \pp\calO_{K_{\pp}}. 
	         \]	   
	    \item  [(2)] There exists $\Iw \in
		 K_{\pp}[[t]]$ which enjoys the following properties:
		 \begin{itemize}
		     \item  [(i)] For $Q' \in D_{\widetilde{Q}}$, $\eta_{\omega}(Q') = I_{\omega,Q}(u(Q')) + \eta_{\omega}(Q)$.
		 
		     \item  [(ii)] $w(t) := \Iw' \in \calO_{K_{\pp}}[[t]]$. 
		 
		     \item  [(iii)] If we write $w(t) = \sum_{i =
		     0}^{\infty} a_{i} t^{i}$, then 
		     \[
		         \min\left\{i \colon v(a_{i}) = 0\right\} = n(\omega,\widetilde{Q}).
		     \]
		 \end{itemize}
	     \end{itemize}
	 \end{proposition}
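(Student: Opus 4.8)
This proposition collects standard properties of $p$-adic integration relative to a regular model; all of it is contained in \cite{LT_thue}*{Section 1}, and the plan is to assemble it in three steps, writing $u$ throughout for the local coordinate (which also serves as the power-series variable denoted $t$ in the statement), with a closing remark on the one delicate point. \emph{Part (1):} since $\scrX$ is regular and $\widetilde{Q}\in\scrX_{\pp}^{\sm}(\F_{\pp})$, the structure morphism $\pi$ is smooth at $\widetilde{Q}$, so, writing $\varpi$ for a uniformizer of $\OK$, the pair $(u,\varpi)$ is a regular system of parameters at $\widetilde{Q}$ and $\widehat{\calO}_{\scrX,\widetilde{Q}}\cong\OK[[u]]$ as $\OK$-algebras. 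A point of $D_{\widetilde{Q}}$ is a point of $\scrX(\OK)=X_{\pp}(K_{\pp})$ reducing to $\widetilde{Q}$, equivalently a local continuous $\OK$-algebra homomorphism $\OK[[u]]\cong\widehat{\calO}_{\scrX,\widetilde{Q}}\to\OK$, which is determined by the image of $u$; that image may be an arbitrary element of the maximal ideal $\pp\OK$, giving the bijection $u\colon D_{\widetilde{Q}}\,\widetilde{\to}\,\pp\OK$ of (1).

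\emph{Part (2)(i).} The $p$-adic integral is additive in its endpoints, so $\eta_{\omega}(Q')=\eta_{\omega}(Q)+\int_Q^{Q'}\omega$, and it suffices to expand $\int_Q^{Q'}\omega$ as a series in $u(Q')$. Transporting $\omega$ to the formal disc via Part (1), write $\omega=h(u)\,du$ with $h=\sum_{i\geq 0}h_i u^i$; on a residue class the integral is computed by termwise formal antidifferentiation, so with $F(u)=\sum_{i\geq 0}\frac{h_i}{i+1}u^{i+1}$ and $I_{\omega,Q}:=F-F(u(Q))$ one gets $\int_Q^{Q'}\omega=I_{\omega,Q}(u(Q'))$, which is (i), and $w=I_{\omega,Q}'=h$ is the $u$-expansion of $\omega$.

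\emph{Parts (2)(ii)--(iii).} Here I would use that, $\widetilde{Q}$ being smooth, $\omX$ equals $\Omega^1_{\scrX/\OK}$ near $\widetilde{Q}$ and is freely generated there by $du$, and I would first normalize $\omega$: after multiplying by a suitable element of $K_{\pp}^{\times}$ we may assume $\omega$ has neither a pole nor a zero along the component $X_v$ containing $\widetilde{Q}$, so the scalar $t$ in the definition of $n(\omega,\widetilde{Q})$ may be taken to be $1$ (a general $\omega$ is handled by rescaling: $n(\omega,\cdot)$ is scale-invariant, while $\eta_{\omega}$, $I_{\omega,Q}$, $w$ rescale compatibly). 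With $\omega=h(u)\,du$ as above, the normalization says exactly that the coefficient of $X_v$ in $\divv_{\scrX}(\omega)$ is $0$; as this coefficient equals $\ord_{(\varpi)}(h)=\min_i v(h_i)$ --- the prime $(\varpi)\subset\OK[[u]]$ being the trace of the generic point of $X_v$ near $\widetilde{Q}$ --- we get $h\in\OK[[u]]$ with nonzero reduction $\bar h\in\F_{\pp}[[u]]$, i.e.\ $w\in\OK[[u]]$, which is (ii). Finally $\bar h\,du$ is the reduction $\widetilde{\omega}$ of $\omega$ near $\widetilde{Q}$, so
\[
n(\omega,\widetilde{Q})=\ord_{\widetilde{Q}}\widetilde{\omega}=\ord_u\bar h=\min\{\,i : v(a_i)=0\,\},
\]
which is (iii).

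\emph{Main obstacle.} No single step is hard; the care lies in the compatibility bookkeeping threaded through all of them --- pinning down $u$, the local generator $du$ of $\omX$, and the scalar normalizing $\omega$ so that the reduction $\widetilde{\omega}$ and the Newton polygon of $w=I_{\omega,Q}'$ match exactly, and checking that the local power series on the various residue classes glue to the globally defined function $\eta_{\omega}$. The construction of the $p$-adic integral together with these verifications is precisely \cite{LT_thue}*{Section 1}, which I would cite rather than reprove.
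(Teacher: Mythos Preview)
Your proposal is correct and matches the paper's treatment exactly: the paper does not prove this proposition at all but simply states that it ``summarizes relevant results of \cite{LT_thue}*{Section 1},'' which is precisely the reference you cite and defer to. Your added sketch of the standard argument (formal smoothness giving $\widehat{\calO}_{\scrX,\widetilde{Q}}\cong\OK[[u]]$, termwise antidifferentiation on the disc, and the normalization/reduction bookkeeping for (ii)--(iii)) is accurate and goes beyond what the paper itself records.
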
 

	The starting point of Chabauty's method is the following proposition.

	\begin{proposition}[\cite{stoll_indep}, Section 6]
\label{P:differentials}
	  Denote by $V_{\chab}$ the vector space of all
	  $\omega \in H^{0}\big(X_{\pp},\Omega^1_{X_{\pp}/K_{\pp}}\big)$ 
	  such that $\eta_{\omega}(Q) = 0$ for all $Q \in X(K)$. Then $\dim V_{\chab} \geq g-r$.
	\end{proposition}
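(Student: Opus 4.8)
The plan is to run the classical Chabauty argument, in the packaging used by Stoll. Write $J_\pp = J \times_K K_\pp$. The pullback map $\iota^*\colon H^0(J_\pp,\Omega^1_{J_\pp/K_\pp}) \to H^0(X_\pp,\Omega^1_{X_\pp/K_\pp})$ is an isomorphism of $g$-dimensional $K_\pp$-vector spaces, so every regular differential on $X_\pp$ is $\iota^*$ of a unique translation-invariant differential on $J_\pp$. Integration of translation-invariant differentials on $J_\pp$ is computed by the logarithm $\log\colon J(K_\pp) \to \Lie(J_\pp)$ of the $p$-adic Lie group $J(K_\pp)$: on the finite-index subgroup of $J(K_\pp)$ coming from the formal group of the N\'eron model, termwise integration of the power-series expansions of a $K_\pp$-basis of $H^0(J_\pp,\Omega^1_{J_\pp/K_\pp})$ converges and defines a homomorphism, which extends uniquely to all of $J(K_\pp)$ with kernel the (finite) torsion subgroup. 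Under the canonical perfect pairing $\Lie(J_\pp) \times H^0(J_\pp,\Omega^1_{J_\pp/K_\pp}) \to K_\pp$ one has $\langle \log Q, \omega\rangle = \int_0^Q \omega$, and hence, for $Q \in X(K_\pp)$, $\eta_{\iota^*\omega}(Q) = \int_P^Q \iota^*\omega = \int_0^{[Q-P]}\omega = \langle \log[Q-P], \omega\rangle$.

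Next I would invoke the Mordell--Weil theorem: $J(K)$ is finitely generated of rank $r$, so $J(K) = \Z P_1 \oplus \cdots \oplus \Z P_r \oplus T$ with $T$ finite. Since $\log$ is a homomorphism that vanishes on torsion, $\log(J(K))$ is contained in the $K_\pp$-linear span $W \subseteq \Lie(J_\pp)$ of $\log P_1, \dots, \log P_r$, and $\dim_{K_\pp} W \leq r$. Let $V_0 \subseteq H^0(J_\pp,\Omega^1_{J_\pp/K_\pp})$ be the subspace annihilating $W$ under the pairing above; then $\dim_{K_\pp} V_0 = g - \dim_{K_\pp} W \geq g - r$.

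Finally I would check that $\iota^* V_0 \subseteq V_{\chab}$: if $\omega \in V_0$ and $Q \in X(K)$, then $[Q-P] \in J(K)$, so $\log[Q-P] \in W$, whence $\eta_{\iota^*\omega}(Q) = \langle \log[Q-P],\omega\rangle = 0$; since $\iota^*$ is injective this gives $\dim V_{\chab} \geq \dim\, \iota^* V_0 = \dim V_0 \geq g-r$. The step that needs the most care is the one using Mordell--Weil: one must argue that $\log(J(K))$ lies in an $r$-dimensional $K_\pp$-\emph{subspace}, not merely inside an $r$-generated $\Z_p$-submodule, and this is exactly where one uses that $\log$ is a continuous $\Z$-linear homomorphism killing torsion, so that the $K_\pp$-span of the images of any finite generating set of $J(K)/T$ already contains $\log(\overline{J(K)})$. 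The compatibility $\eta_{\iota^*\omega}(Q) = \langle\log[Q-P],\omega\rangle$ — i.e.\ that abelian integrals on $X$ are pulled back from $J$ — together with the convergence of the formal logarithm is standard and can alternatively be cited directly from \cite{PMc_survey}*{Sections 4--5} or \cite{stoll_indep}*{Section 6}.
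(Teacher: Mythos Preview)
Your argument is correct and is exactly the classical Chabauty argument as packaged by Stoll and McCallum--Poonen. Note that the paper itself does not supply a proof of this proposition: it is stated as a citation to \cite{stoll_indep}*{Section 6} (and the surrounding discussion points to \cite{PMc_survey}*{Sections 4--5} for background on $p$-adic integration), so there is no ``paper's own proof'' to compare against beyond the sketch you have reproduced.
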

	
 \subsection{Newton Polygons} 

     We now will use Newton polygons to bound the number of zeroes of 
$I_{\omega,Q}(u(Q'))$ as $Q'$ ranges over $D_{\widetilde{Q}}$. 
Following \cite{stoll_indep}*{Section 6}, we let $e = v(p)$ be
     the absolute ramification index of $K_{\pp}$ and make the following
     definitions, where $v_p$ is the valuation of $\Q_p$ (and in
     particular, normalized so that $v_p(p) = 1$).
     \begin{definition}
	We set 
	\[
	    \nu(Q) = \#\left\{ Q' \in D_{\widetilde{Q}} \text{ such
          that } I_{\omega,Q}(u(Q')) = 0 \text{ for all } \omega \in V_{\chab} \right\}
	\]
	and 
	\begin{align*}
	   \delta(v, n) 
		&= \max\{ d \ge 0 \mid n+d+1 - v(n+d+1) \le n+1 - v(n+1) \} \\
		&= \max\{ d \ge 0 \mid e\,v_p(n+1) + d \le e\,v_p(n+d+1) \} \,.
	\end{align*}	
    \end{definition}
    The key proposition is \cite{stoll_indep}*{Proposition 6.3} where a Newton polygon
    argument gives the following bound.
   \begin{proposition} 
     \label{P:residueBound}
     We have the bound 
     \[
     	\nu(Q) \leq 1 + n\big(\omega,\widetilde{Q}\big) +
	\delta\big(v,n\big(\omega,\widetilde{Q}\big)\big).
     \]
     Furthermore, suppose $e < p - 1$. Then $\delta(v, n) \le e\,\lfloor n/(p-e-1) \rfloor$.
     In particular, if $p > n + e + 1$, then $\delta(v, n) = 0$.
  \end{proposition}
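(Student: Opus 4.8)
The plan is to carry out the Newton polygon estimate of \cite{stoll_indep}*{Proposition 6.3}. First I would fix $\omega \in V_{\chab}$, abbreviate $n = n(\omega,\widetilde{Q})$, and reduce everything to a statement about a single power series: by the previous proposition $u$ identifies $D_{\widetilde{Q}}$ with $\pp\calO_{K_{\pp}}$, and for $Q' \in D_{\widetilde{Q}}$ one has $I_{\omega,Q}(u(Q')) = 0$ exactly when $u(Q')$ is a zero of $I_{\omega,Q}(t) \in K_{\pp}[[t]]$. Since $\nu(Q)$ counts such $Q'$ subject to further conditions, $\nu(Q)$ is at most the number of zeros of $I_{\omega,Q}$ in $\pp\calO_{K_{\pp}}$, hence at most the number $N$ of zeros of $I_{\omega,Q}$ of valuation $\geq 1$, counted with multiplicity over $\overline{K_{\pp}}$. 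Here $I_{\omega,Q}\not\equiv 0$, since its derivative $w(t)=\sum_i a_i t^i$ has $v(a_n)=0$, and $I_{\omega,Q}$ converges on $\pp\calO_{K_{\pp}}$ because $v(a_i)\geq 0$ for all $i$. So it would suffice to prove $N \leq 1 + n + \delta(v,n)$.

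Next I would compute $N$ via Newton polygons. Writing $I_{\omega,Q}(t)=\sum_{i\geq1}b_i t^i$, the relation $I_{\omega,Q}'=w$ gives $i\,b_i=a_{i-1}$, so $v(b_i)=v(a_{i-1})-v(i)$, and $N$ equals the largest $i\geq1$ at which $v(b_i)+i$ attains $\min_{j\geq1}\bigl(v(b_j)+j\bigr)$. Reindexing by $i=k+1$ and setting $g(k):=v(a_k)+k-v(k+1)$ for $k\geq0$, one has $v(b_{k+1})+(k+1)=g(k)+1$, whence $N=1+k^{\ast}$ with $k^{\ast}:=\max\{k\geq0: g(k)=\min_{l}g(l)\}$. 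Then I would feed in what the earlier proposition says about $w$: $v(a_j)\geq0$ for all $j$ and $v(a_n)=0$, so $g(n)=n-v(n+1)$ while $g(k)\geq k-v(k+1)$ in general. If $k^{\ast}\leq n$ there is nothing further to do (recall $\delta(v,n)\geq0$); otherwise, writing $k^{\ast}=n+d$ with $d\geq1$, the chain $(n+d)-v(n+d+1)\leq g(k^{\ast})\leq g(n)=n-v(n+1)$ rearranges to exactly the inequality $n+d+1-v(n+d+1)\leq n+1-v(n+1)$ defining $\delta$, so $d\leq\delta(v,n)$. Either way $k^{\ast}\leq n+\delta(v,n)$, giving $N\leq 1+n+\delta(v,n)$, which is the first bound.

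For the final sentence I would estimate $\delta(v,n)$ directly. Assuming $e<p-1$ and using $v(m)=e\,v_p(m)$ for positive integers $m$, the defining inequality of $\delta(v,n)$ becomes $d+e\,v_p(n+1)\leq e\,v_p(n+d+1)$, so any admissible $d\geq1$ obeys $d\leq e\,v_p(n+d+1)$. Put $k=v_p(n+d+1)\geq1$; then $p^k\mid n+d+1$ gives $p^k\leq n+d+1\leq n+ek+1$, while Bernoulli's inequality gives $p^k=(1+(p-1))^k\geq1+k(p-1)$. Combining yields $k(p-1-e)\leq n$, hence $d\leq ek\leq e\lfloor n/(p-1-e)\rfloor$, and therefore $\delta(v,n)\leq e\lfloor n/(p-1-e)\rfloor$. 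Finally, if $p>n+e+1$ then $p-1-e>n\geq0$, so the floor vanishes and $\delta(v,n)=0$.

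The one place that needs care is the bookkeeping in the middle step: one must recognize that the ``$+1$'' is precisely the index shift produced by integrating ($b_i=a_{i-1}/i$), and that $\delta(v,n)$ counts exactly the extra Newton-polygon vertices that the corrections $-v(i)$ drag onto the minimal edge for $i$ just past $n+1$ -- i.e.\ that the rearranged Newton-polygon inequality matches the definition of $\delta$ on the nose. Once that identification is made, the rest is elementary, modulo the standard Newton-polygon computation of the number of zeros of a convergent power series in a closed disk.
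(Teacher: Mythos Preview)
Your argument is correct and is precisely the standard Newton polygon computation underlying \cite{stoll_indep}*{Proposition~6.3}: integrate term-by-term to get $b_i=a_{i-1}/i$, read off the number of zeros in the closed disk $v(t)\geq 1$ as the largest index minimizing $v(b_i)+i$, and then use $v(a_n)=0$ together with $v(a_j)\geq 0$ to control how far past $n+1$ that index can drift; your treatment of the bound $\delta(v,n)\leq e\lfloor n/(p-e-1)\rfloor$ via Bernoulli is also the usual one. The paper itself does not supply a proof of this proposition---it simply quotes Stoll's result---so there is nothing to compare beyond noting that you have reproduced exactly the argument the citation points to.
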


\subsection{Bounding $\#X(K)$}
\label{ss:main}
    We bound $\#X(K)$ as follows. For each $\widetilde{Q} \in \scrX^{\text{sm}}_{\pp}(\F_{\pp})$, 
$$\#\big(X(K) \cap D_{\widetilde{Q}}\big) \leq \nu(Q).$$ 
    For nonzero $\omega \in V_{\chab}$ (see Definition \ref{P:differentials}) 
    summing the bound of Proposition \ref{P:residueBound} over the residue
    classes of each smooth point gives
    \[
	\#X(K) \leq \#\scrX_{\pp}^{\text{sm}}(\F_{\pp}) + \sum_{\widetilde{Q} \in
	\scrX_{\pp}^{\text{sm}}(\F_{\pp})} \big(n\big(\omega,\widetilde{Q}\big) +
	\delta\big(v,n\big(\omega,\widetilde{Q}\big)\big)\big).
    \]
    To use this we need to bound 
    \[
	\sum_{\widetilde{Q} \in \scrX^{\text{sm}}_{\pp}(\F_{\pp})} n\big(\omega,\widetilde{Q}\big).
    \]	
    As in the good reduction case of \cite{Cole_eff}, Riemann-Roch  
    gives, for a fixed $\omega$, the preliminary
    bound
    \[
	\sum_{\widetilde{Q} \in \scrX_{\pp}^{\text{sm}}(\F_{\pp})} n\big(\omega,\widetilde{Q}\big) \leq 
	\deg \divv \omega
	= 2g - 2.
    \]
    If $p > 2g + e-1$, then in particular
    $p > n(\omega,\widetilde{Q}) + e + 1$ for every $\widetilde{Q}$ and Proposition
    \ref{P:residueBound} reveals that
    $\delta(v,n(\omega,\widetilde{Q})) = 0$,
    recovering the bound of Theorem \ref{T:PMC}
    \[
    \#X(K) \leq \#\scrX_{\pp}^{\text{sm}}(\F_{\pp}) + 2g - 2.
    \]

\subsection{The Chabauty divisor}
\label{ss:chabautyDivisor}

      The idea of the proof of Theorem \ref{T:stollthm} (due to  \cite{stoll_indep}) is to use a different
      differential $\omega_{\widetilde{Q}}$ for each residue class to
      get a better bound. Stoll does this for the good reduction case
      \cite{stoll_indep}*{Theorem 6.4} and what prevents his method
      from working in full generality is that the reduction map
      \begin{equation}\label{e:reduction}
        \P\big(H^0\big(X_{\pp},\Omega^1\big)\big) \to 
        \P\big(H^0\big(\scrX_{\pp},\Omega^1\big)\big)
      \end{equation}
     (and thus the classical rank function $r(D)$ of Definition \ref{d:classicalRank})
      is well behaved only when $\scrX$ is
      smooth. 

The key feature of Stoll's proof is the following: Chabauty's method  produces a divisor $D$ with the property that a bound on $\deg D $ gives a bound on $\#X(K)$. In this subsection we extend Stoll's construction of this divisor to the case of bad reduction.\\

    For a map $f\colon Y \to Z$ denote by $\omega_{f}$ the relative dualizing sheaf of $f$. 
    Recall the setup of \eqref{E:setup}.
    The appropriate generalization of the reduction map \eqref{e:reduction} is the following. 
    Since the structure map $\pi\colon \scrX \to \Spec \calO_{K_{\pp}}$ is flat \cite{liu}*{p. 347}, base change for relative dualizing sheaves
    \cite{liu}*{Theorem 6.4.9} gives an isomorphism $i^{*}\omega_{\pi} \isom
    \omega_{\pi_{K_{\pp}}} \isom \Omega^1_{\Xp/K_{\pp}}$. One gets an inclusion of global sections
     \begin{center}$\xymatrix{
       0 \ar[r] & \homX \ar[r]^-{\phi} & \homX \otimes_{\calO_{K_{\pp}}} K_{\pp}
     \ar@{=}[r]& H^0\big(\Xp,\Omega^1_{\Xp/K_{\pp}}\big)}$.
    \end{center}
    A subspace $V \subset H^0\big(\Xp,\Omega^1_{\Xp/K_{\pp}}\big)$ thus pulls back to a submodule
    $V_{\calO_{K_{\pp}}} := \phi^{-1}(V) \subset \homX$.
    Now let $V = V_{\chab}$ (see Proposition \ref{P:differentials}),
    and for any $\widetilde{Q} \in \scrX_{\pp}^{\text{\text{sm}}}(\F_{\pp})$ let
    \[
        n_{\widetilde{Q}} := \min \big\{ n\big(\omega,\widetilde{Q}\big)  \,\big| \, \omega \in V \big\}.
    \]    
    Then Proposition \ref{P:residueBound} becomes
    \[
        \nu(\widetilde{Q}) \leq 1 + n_{\widetilde{Q}} +
	\delta\big(v,n_{\widetilde{Q}}\big)
    \]         
    and thus
    \[
	\#X(K) \leq \#\scrX_{\pp}^{\text{sm}}(\F_{\pp}) + \sum_{\widetilde{Q} \in
	\scrX_{\pp}^{\text{sm}}(\F_{\pp})} \big(n_{\widetilde{Q}} +
	\delta(v,n_{\widetilde{Q}})\big).
    \]    

    \begin{definition}
   \label{D:chabautyDivisor}
      We define the \defi{Chabauty Divisor} to be the divisor
      \begin{equation}\label{eq:chabautyDivisor}
        D_{\chab,\pp} = \sum_{\widetilde{Q} \in \scrX_{\pp}^{\text{\text{sm}}}(\F_\pp)}
        n_{\widetilde{Q}}\widetilde{Q}.
      \end{equation}
    \end{definition}

Since $\sum n_{\widetilde{Q}} = \deg D_{\chab,\pp}$, the goal is to bound $\deg D_{\chab,\pp}$. \\

In what follows, we introduce new ideas to show that $\deg D_{\chab,\pp} \leq 2r$.

 \section{Geometric Rank functions}
 \label{S:geometricRank}

In this section we define the rank functions $r_{\num}(D) , r_{\Ab}(D)
, r_{\toric}(D) ,  r_X(D)$ and prove Clifford's theorem for $r_{\ab}$.
\vspace{4pt}

Let $\OO_{K_\pp}$ be a DVR with fraction field $K_\pp$ and
perfect, infinite residue field $\F_\pp$
and let  $\scrX$ be a regular \emph{semistable} curve over $\OO_{K_{\pp}}$
with generic fiber $X_{\pp}$ and special fiber $\scrX_\pp$.

\begin{remark}
Note that in this section we make the additional assumptions that
  \textbf{the residue field is infinite} and that $\scrX$ is \textbf{semistable}. 
 With a view toward our intended application -- bounding the degree of the Chabauty Divisor
  of Definition \ref{D:chabautyDivisor} -- we note that the  degree of
  $D_{\chab, \pp}$ can  only \emph{increase} after field extensions.
  Moreover, after a finite extension of $K_\pp$, $\scrX_{K_\pp}$ is
  dominated by a regular semistable model $\scrX'$ of its generic
  fiber, and again the degree of the Chabauty Divisor of $\scrX'$ can
  only grow.
See Section \ref{S:finalTouch} for the precise argument. 

\end{remark}

\subsection{Motivation -- N\'eron models}
\label{ss:motivation}

 Let $\cd_1,\cd_2$ be horizontal divisors on $\scrX$ (in other words, no component of $\cd_i$ is contained in $\scrX_\pp$).  Let $D_1,D_2$ be their generic fibers.   Suppose $\deg(D_1)=\deg(D_2)$ and that $D_1,D_2$ are supported on $K_\pp$-rational points. Since $\scrX$ is regular, the reductions $\rho(\cd_1),\rho(\cd_2)$ are supported on smooth points of $\scrX_\pp$.

We are interested in the following question: {\em are $D_1,D_2$ linearly equivalent}?
We approach this by considering $D_1-D_2$ and finding obstructions for $D_1-D_2=0$ in the Jacobian $J$ of $X_\pp$.  Let $\cj$ be a N\'{e}ron model for the Jacobian.  $D_1-D_2$ induces a section $d\colon\Spec \calO_{K_{\pp}}\rightarrow\cj$ by the N\'{e}ron mapping property.  So our question becomes: {\em is $d$ supported on the closure of the identity in $\cj$?}

We can refine this question by studying the structure of the N\'{e}ron
model of the Jacobian \cite[Sec. 9.5]{BLR},\cite[App. A]{BakerSpec}.
The N\'{e}ron model is constructed from the relative Picard scheme
$\Pic_{\scrX/\OO_{K_{\pp}}}$.  Let $P$ be the open subfunctor of
$\Pic_{\scrX/\calO_{K_{\pp}}}$ consisting of divisors of total degree
$0$.  Let $U$ be the closure of the unit section $\Spec
\calO_{K_{\pp}}\rightarrow P$.  
Then by \cite[Thm 9.5/4]{BLR}, $\cj=P/U$.  The
$\calO_{K_{\pp}}$-points of $U$ can be described in terms of the
components of $\scrX_\pp$.  In fact, they are line bundles of the form
$\OO_{\scrX}(V)$ where $V$ is a divisor of $\scrX$ supported on the special
fiber.  We will call such divisors {\defi twists} (see Subsection \ref{ss:dualGraphs}). 
These line bundles restrict to trivial bundles on the generic fiber.  

The identity component of the N\'{e}ron model can be related to the Picard scheme.  By \cite[Theorem 9.5.4b]{BLR}, $\cj^0=\Pic^0_{\scrX/\calO_{K_{\pp}}}$ where the latter scheme denotes the subscheme of the Picard scheme consisting of line bundles whose restriction to every component of the special fiber has degree $0$.   Let $\pi\colon\widetilde{\scrX}_\pp\rightarrow \scrX_\pp$ be the normalization.  Then there is an exact sequence \cite[Thm 7.5.19]{liu}
\[0\rightarrow T\rightarrow \Pic^0_{\scrX_\pp}\rightarrow \Pic^0_{\widetilde{\scrX}_\pp}\rightarrow 0\]
where $T$ is a toric group (i.e. geometrically isomorphic to a power of $\G_m$).  Note that $\Pic^0_{\widetilde{\scrX}_\pp}$ is an Abelian variety.  We can use this structure of $\cj$ to give a hierarchy of conditions required for $D_1-D_2$ to be the trivial divisor in $J$:
\begin{enumerate}
\item $\rho(\cd_1-\cd_2)$ must be in the identity component of $\cj_\pp$;

\item The image of $\rho(\cd_1-\cd_2)$ in $\Pic^0_{\widetilde{\scrX}_\pp}$ must be the identity;

\item $\cd_1-\cd_2$ must be the identity in $\cj_\pp$;

\item $\cd_1-\cd_2$ must extend as the identity in $\cj$.
\end{enumerate}

We call the first three steps in the hierarchy {\em numeric, Abelian, toric.} We can rephrase the steps in the hierarchy as finding a better and better approximation to a section of $\OO_{\scrX}(\cd_1-\cd_2)$:
\begin{enumerate}
\item[(1)] There is a twist $\OO_\scrX(V)$ such that $\OO_\scrX(\cd_1-\cd_2+V)$ has degree $0$ on every component of $\scrX_\pp$;

\item[(2)] There is a twist $\OO_\scrX(V)$ together with a section $s_v$ of $\OO_\scrX(\cd_1-\cd_2+V)|_{X_v}$ for each component $X_v$ of $\scrX_\pp$;

\item[(3)] The sections can be chosen above to agree across nodes;

\item[(4)] The section $s$ formed from the $s_v$'s, considered as a section of $\OO_\scrX(\cd_1-\cd_2+V)|_{\scrX_\pp}$ extends to $\scrX$.
\end{enumerate}



Note that steps (1)-(3) are only concerned with $\rho(\cd_1),\rho(\cd_2)$.  Step (1) has been studied in the context of linear systems on graphs by Baker-Norine.  Baker-Norine's theory works best when all components of $\scrX_\pp$ are rational.  In this case step (2) imposes no constraints. 
See Subsection \ref{S:bakerNorineBackground} for a short review.
\vspace{4pt}

 In this paper, we will advocate for including step (2) when not all
 components of the special fiber are rational. 

\subsection{Dual graphs}
\label{ss:dualGraphs}

Let $\scrX / \calO_{K_{\pp}}$ be a regular, semistable curve.  We define the \defi{dual
  graph} $\Gamma$ of $\scrX$ to be the graph whose vertices are the irreducible components $X_v$ of $\scrX_\pp$ and with an edge between the vertices corresponding to nodes of $\scrX_\pp$.

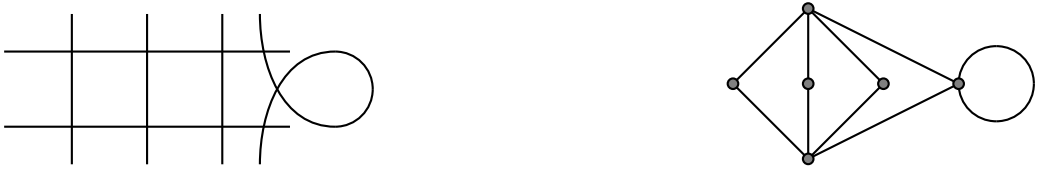
\begin{figure}
\begin{center}
\tikzstyle{every node}=[circle, draw, fill=black!50,
                        inner sep=0pt, minimum width=4pt]
   \begin{tikzpicture}
    \draw[thick] (-1.9,.5) -- (1.9,.5);
    \draw[thick] (-1.9,-.5) -- (1.9,-.5);
    \draw[thick] (-1,1) -- (-1,-1);
    \draw[thick] (-0,1) -- (0,-1);
    \draw[thick] (1,1) -- (1,-1);

  \draw [thick] (1.5,1) to [out=-90,in=-180] (2.5,-.5);
  \draw [thick] (1.5,-1) to [out=90,in=-180] (2.5,.5);
  \draw [thick] (2.5,-.5) to [out=0,in=-90]	(3,0);
  \draw [thick] (2.5,.5) to [out=0,in=90]	(3,0); 
   \end{tikzpicture}
\quad \hspace{4cm}
\begin{tikzpicture}[thick]
\foreach \x in {0,1,2}
{\draw (\x,0)  -- (1,-1);
\draw (\x,0) -- (1,1);
}
\draw (3,0) -- (1,1);
\draw (3,0) -- (1,-1);

\foreach \x in {3.5}
{  \draw [thick] (\x + -.5,0) to [out=-90,in=180] (\x 0,-.5);
  \draw [thick] (\x + 0,-.5) to [out=0,in=-90] (\x + .5,0);
  \draw [thick] (\x + .5,0) to [out=90,in=0] (\x + 0,.5);
  \draw [thick] (\x + 0,.5) to [out=180,in=90] (\x + -.5,0);
}
\foreach \x in {0,1,2,3} 
{\draw [fill=gray] (\x,0) circle (2pt);}
 \draw [fill=gray] (1,-1) circle (2pt); 
 \draw [fill=gray] (1,1)   circle (2pt);

\end{tikzpicture}
\end{center}
\caption{A curve and its dual graph.}
\label{F:C5 mod 2}
\bigskip
\hrule
\end{figure}

 \begin{remark}
  Our usage differs from that of Baker \cite{BakerSpec} for the following reasons: we do not require the models to be strictly semistable; vertices in the dual graph correspond to irreducible
  components of $\scrX_\pp$, not to components of the normalization.  Consequently, components $X_v$ of $\scrX_\pp$ may have nodes. 
  
   Because there are multiple edges in our graph, we will treat edges as a multiset in $\Sym^2 V(\Gamma)$, so there may be many edges $e$ satisfying $e=vw$ for a pair of vertices $v,w$.
\end{remark}

Let $\calM(\Gamma)$ be the Abelian group of integer-valued functions on the vertices of $\Gamma$.  Let
$\Div(\Gamma)$ be the free Abelian group generated
by the vertices of $\Gamma$.   For $D=\sum_v a_vv\in\Div(\Gamma)$, write $D(v)$ for the coefficient $a_v$.  We say that $D$ is effective and write $D\geq 0$ if $D(v)\geq 0$ for all $v\in V(\Gamma)$.  The \defi{Laplacian} on $\Gamma$ is defined to be the map
\begin{eqnarray*}
\Delta\colon\calM(\Gamma)&\rightarrow&\Div(\Gamma)\\
\varphi &\mapsto& \sum_v\left(\sum_{e=wv} \left(\varphi(w)-\varphi(v)\right)\right)\left(v\right).
\end{eqnarray*}
Let $\calM(\Gamma)_\sim$ be the quotient of $\calM(\Gamma)$ by
constant functions.  Note that $\Delta$ is defined on
$\calM(\Gamma)_\sim$.  

We can view $\varphi\in \calM(\Gamma)$ as a
\defi{twist} (i.e. a vertical divisor on $\scrX$) by setting 
$V_{\varphi} = \sum_v \varphi(v)X_v$.   
Since the special fiber of $\scrX$ is linearly equivalent to $0$,
elements of $\calM(\Gamma)$ that differ by a constant function
induce linearly equivalent twists. For a divisor $\calD$ on $\scrX$,
we define the \defi{twist} of $\calO_{\scrX}(\calD)$ by $\varphi$ to be
$\calO_{\scrX}(\calD)(\varphi) =\calO_{\scrX}(\calD+V_\varphi)$. 
\vspace{4pt}

Let $\Div(X_\pp(K_{\pp}))$ be the divisors of $X_\pp$ supported on
$K_{\pp}$-points (i.e. each individual point is defined over $K_\pp$).
There is a \defi{reduction} map
\[\rho\colon X_\pp(K_{\pp})\rightarrow \scrX_\pp(\F_\pp)\]
given by $P\mapsto\overline{\{P\}}\cap \scrX_\pp$.
By regularity of $\scrX$, this map is surjective onto $\scrX_{\pp}^{\sm}(\F_\pp)$ and therefore induces a surjective map
\[\rho\colon\Div(X_\pp(K_{\pp}))\rightarrow\Div(\scrX_{\pp}^{\sm}(\F_\pp)).\]
There is a \defi{multidegree} map
\begin{eqnarray*}
\mdeg\colon \Div(X_\pp(K_{\pp}))&\rightarrow &\Div(\Gamma)\\
D&\mapsto& (v\mapsto \deg(\OO_\scrX(D)|_{X_v})).
\end{eqnarray*}
that factors through $\rho$.  Note that our usage differs from that of \cite{BakerSpec} in that what we call the multidegree map is there called the specialization map.
The canonical bundle $K_{\scrX_\pp}$ of the special fiber has the following multidegree:
\[\mdeg(K_{\scrX_\pp})=\sum_v (2g(X_v)+\deg(v)-2)(v).\]

Since $\scrX$ is proper, $X_\pp(K_\pp) = \scrX(\calO_{K_\pp})$ and
thus $\Div X_\pp(K_\pp)$ is naturally identified with horizontal
divisors in $\Div \scrX(\calO_{K_\pp})$. In particular, we will freely
pass between divisors supported on $X_\pp(K_\pp)$ and their
closures.  For example, for $\varphi \in \calM(\Gamma)$ and $D \in \Div
X_\pp(K_\pp)$, the twist $D(\varphi)$ refers to the divisor $\calD +
V_{\varphi}$, where $\calD$ is the closure of $D$. Moreover, we will apply $\rho$ and $\mdeg$ to horizontal
divisors.

\subsection{Rank functions}
\label{ss:rankFunctions}

We can use the steps in the hierarchy of Subsection \ref{ss:motivation} to define geometric rank functions.   Let $D$ be a divisor in $\Div(X_\pp(K_{\pp}))$ and $E$ be an effective divisor in $\Div(X_\pp(K_{\pp}))$.  Let $\cd,\ce$ denote the closures of $D,E$ on $\scrX$. We consider the following conditions:
\begin{enumerate}
\item[(1)] There is a twist $\varphi$ such that $\OO_\scrX(\cd-\ce)(\varphi)$ has non-negative degree on every component of $\scrX_{\pp}$;

\item[(2)] There is a regular section $s_v$ of $\OO_\scrX(\cd-\ce)(\varphi)|_{X_v}$ for each $v\in V(\Gamma)$;

\item[(3)] The sections $s_v$ can be chosen to agree across nodes;

\item[(4)] The section $s$ formed from the $s_v$'s  extends to $\scrX$ as a section of $\OO_\scrX(\cd-\ce)(\varphi)$.
\end{enumerate}

\begin{definition}
\label{R:rankFunctions}
  We define \defi{rank functions} $r_1,r_2,r_3,r_4$, which we denote by
  $r_{\num}, r_{\Ab}, r_{\toric}, r_X$, as follows. Let $D\in\Div(X_\pp(K_\pp))$. 
 We say
  $r_i(D)=-1$ if and only if $D$ fails to satisfy steps $1$ to $i$, and for $r \geq 0$,
 we say  $r_i(D)=r$ if and only if for any effective divisor $E$ of degree $r$ on $X_\pp(K_\pp)$, $D-E$ satisfies steps $1$ to $i$ and there is an effective divisor $E$ on $X_\pp(K_\pp)$ of degree $r+1$ for which $D-E$ fails
  to satisfy one of steps $1$ to $i$.  

\end{definition}

\begin{definition} We say two divisors $D,E\in\Div(X_\pp(K_\pp))$ of the same degree are ($\num,\Ab,\toric$)\defi{-linearly equivalent} if the difference of their closures on $\scrX$,
$\cd-\ce$, satisfies steps $1$ to $1,2$, or $3$ respectively.
\end{definition}

These rank functions do not all depend on the geometry of the generic fiber $X_\pp$.  Instead, some of them capture the combinatorics and the geometry of the special fiber.  First, we note that the restriction of a twist  to the special fiber depends only on the special fiber.

\begin{lemma} 
\label{l:centralfiber}
Let $\varphi
\in\calM(\Gamma)$, considered as the twist $\sum \varphi(v) X_v$. 
Let $X_{v_0}$ be a component of the special fiber.  For an edge $e$ of $\Gamma$ at $v_0$, let $p_e$ be the corresponding node on $X_{v_0}$.   Then,
\[\OO_\scrX(\varphi)|_{X_{v_0}}=\sum_{e=v_0w} (\varphi(w)-\varphi(v_0))(p_e)\]
where the sum is over the edges at $v_0$.
Moreover, 
\[\mdeg(\OO_{\scrX}(\varphi))=\Delta(\varphi).\]

\end{lemma}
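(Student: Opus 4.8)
The plan is to compute the restriction $\OO_\scrX(\varphi)|_{X_{v_0}}$ directly from the divisor $V_\varphi = \sum_v \varphi(v) X_v$, using the fact that $\scrX$ is a regular semistable surface, so that the components $X_v$ and $X_{v_0}$ meet transversally at the nodes and $X_{v_0}^2$ can be controlled via $X_{v_0}\cdot(\text{special fiber}) = 0$. First I would reduce to the case of a single vertical divisor summand $X_w$ with $w \neq v_0$, and separately handle the self-term $\varphi(v_0) X_{v_0}$; by linearity of $\OO_\scrX(-)|_{X_{v_0}}$ it suffices to understand $\OO_\scrX(X_w)|_{X_{v_0}}$ for each $w$ (including $w = v_0$).

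The key computation is: for $w \neq v_0$, the divisor $X_w$ meets $X_{v_0}$ precisely at the nodes $p_e$ corresponding to the edges $e = v_0 w$ of $\Gamma$, each with multiplicity $1$ by semistability and regularity (transversal intersection), so $\OO_\scrX(X_w)|_{X_{v_0}} = \sum_{e = v_0 w}(p_e)$ as a divisor class on $X_{v_0}$. For the self-intersection term, I would use that the whole special fiber $\scrX_\pp = \sum_v X_v$ is linearly equivalent to $0$ on $\scrX$ (it is the fiber over the closed point of a DVR, hence principal), so $\OO_\scrX\big(\sum_v X_v\big)|_{X_{v_0}} = \OO_{X_{v_0}}$; subtracting off the contributions of all $X_w$ with $w \neq v_0$ gives $\OO_\scrX(X_{v_0})|_{X_{v_0}} = -\sum_{e = v_0 w,\ w \neq v_0}(p_e)$. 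Here one should be slightly careful about edges $e$ with $e = v_0 v_0$ (loops, i.e.\ nodes lying on $X_{v_0}$ itself where $X_{v_0}$ is non-smooth): such a node contributes to neither side since $X_{v_0}$ does not meet the \emph{other} components there, and $\varphi(v_0) - \varphi(v_0) = 0$, so these edges drop out of the final formula automatically — which is consistent with the stated sum being over edges $e = v_0 w$ where the term vanishes when $w = v_0$. Combining, for general $\varphi$,
\[
\OO_\scrX(\varphi)|_{X_{v_0}} \;=\; \sum_{w}\varphi(w)\,\OO_\scrX(X_w)|_{X_{v_0}} \;=\; \sum_{e = v_0 w}\big(\varphi(w) - \varphi(v_0)\big)(p_e),
\]
where in regrouping I have used $\sum_{e = v_0 w,\ w\neq v_0}(p_e)$ on the $X_{v_0}$-term to move the $-\varphi(v_0)$ across all edges at $v_0$ (loops contributing $0$), yielding exactly the claimed expression.

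Finally, the multidegree statement follows by taking degrees: $\mdeg(\OO_\scrX(\varphi))(v_0) = \deg\big(\OO_\scrX(\varphi)|_{X_{v_0}}\big) = \sum_{e = v_0 w}(\varphi(w) - \varphi(v_0))$, which is precisely $\Delta(\varphi)(v_0)$ by the definition of the Laplacian given in Subsection \ref{ss:dualGraphs}; since this holds for every vertex $v_0$, $\mdeg(\OO_\scrX(\varphi)) = \Delta(\varphi)$. The main obstacle I anticipate is bookkeeping the intersection multiplicities and the loop/non-smooth-component edge cases correctly — i.e.\ verifying that regularity plus semistability really does force each node to be a transversal intersection point contributing multiplicity exactly $1$, and that nodes lying on a single component $X_{v_0}$ (allowed here since we do not assume strict semistability) genuinely contribute nothing to either side; everything else is linear algebra on $\Div(X_{v_0})$ and the principality of the special fiber.
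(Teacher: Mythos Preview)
Your proposal is correct and follows essentially the same approach as the paper: both use that the special fiber is principal to handle the self-intersection term, and both reduce to the computation $\OO_\scrX(X_w)|_{X_{v_0}}=\sum_{e=v_0w} p_e$ for $w\neq v_0$. The only cosmetic difference is that the paper first replaces $\varphi$ by $\varphi-\varphi(v_0)$ (so the $X_{v_0}$ summand disappears before restricting), whereas you compute $\OO_\scrX(X_{v_0})|_{X_{v_0}}$ explicitly and then regroup; your added remarks on loops are a welcome clarification but not a different argument.
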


\begin{proof}
Because the special fiber of $\scrX$ is linearly equivalent to $0$ on $\scrX$, $\varphi$ is linearly equivalent to $\sum (\varphi(v)-\varphi(v_0)) X_{v}$.  The lemma follows from the fact that
\[\OO_\scrX(X_w)|_{X_{v_0}}=\sum_{e=v_0w} p_e,\]
and the claim about the multidegree of $\OO_\scrX(\varphi)$ now follows from definitions.
\end{proof}

\begin{remark}
  Because steps (1)-(3) only depend on the reduction of $E$ and the reduction map is surjective, to
  determine the rank, one need only look at classes of effective
  divisors in $\Div(\scrX_\pp^{\sm}(\F_\pp))$.  Furthermore,
  $r_{\num}$ is only sensitive to the dual graph $\Gamma$ and the
  multidegree $\mdeg(\cd)$ of $\cd$ and is equal to the rank function of
  Baker-Norine \cite{BakerN:RR} on the dual graph of $\scrX_\pp$.
  Because the rank functions $r_{\Ab},r_{\toric}$ only depend on
  $\rho(\cd)$, they factor through the reduction map.  We may therefore
  view $r_{\Ab},r_{\toric}$ as defined on
  $\Div(\scrX_\pp^{\sm}(\F_\pp))$.  
\end{remark}


\begin{remark}
One can also study the ranks associated to a regular proper minimal model.  In this case, the Jacobian will have unipotent factors coming form non-reduced components. The combinatorics of bad reduction in this case have been studied by Lorenzini \cite{L_dual,L_zeta} and are quite involved.
\end{remark}

 \subsection{Rank of a divisor on a graph}
 \label{S:bakerNorineBackground}

We review some facts from the theory of linear systems on graphs
\cite{BakerSpec, BakerN:RR}, and in particular the identification of $r_{\num}$ with the rank of a divisor on a graph.\vspace{4pt}
 
\begin{definition}

Two divisors $D_1,D_2$ on a graph are said to be linearly equivalent if $D_1-D_2=\Delta(\varphi)$ for some $\varphi\in\calM(\Gamma)$.
For a non-negative integer $r$, a divisor $D$ on $\Gamma$ is said to have \defi{rank at least} $r$ if for
any effective divisor $E$ on $\Gamma$ of degree $r$, the divisor $D-E$ is linearly equivalent to an
effective divisor. We define the \defi{rank} $r(D)$ of $D$ 
to be $-1$ if $D$ is not linearly equivalent to an effective divisor, and otherwise we define $r(D)$ to be 
the
largest integer $r$ such that $D$ has rank at least
$r$. 
\end{definition}

\begin{remark}
Let $\scrX / \calO_{K_{\pp}}$ be a regular, semistable curve and let $D \in \Div X_\pp(K_{\pp})$ be a divisor. Then it follows from Lemma \ref{l:centralfiber} that
$$r_{\num}(D) = r(\mdeg(D)).$$ 
\end{remark}

The \defi{canonical divisor} of $\Gamma$ is
the divisor 
\[K_\Gamma=\sum_v (\deg(v)-2)(v).\]
The degree of this divisor is given by 
\[\deg K_{\Gamma} = 2g(\Gamma)-2\]
where $g(\Gamma)$ is the first Betti number of the graph. 
We have the following comparison between the multidegree of the canonical bundle of $\scrX_{\pp}$ and the canonical divisor on $\Gamma$:
\[\mdeg(K_{\scrX_{\pp}})=K_\Gamma+\sum_v 2g(X_v).\]
This formula follows by the arguments of \cite{BakerSpec}*{Lemma 4.15}.
If $\scrX_{\pp}$ is totally degenerate (i.e. $g(X_v) = 0$ for every
component $X_v$ of $\scrX_{\pp}$), then we do have $\mdeg(K_{\scrX_{\pp}}) = K_\Gamma$. \\


We say that a divisor $D$ on $\Gamma$ is \defi{special} if $r(K_{\Gamma} - D) \geq 0$. We will make extensive use of the following:
\begin{theorem}[Clifford's theorem for graphs
  \cite{BakerN:RR}*{Corollary 3.5}]
\label{T:cliffordGraph}
  Let $\Gamma$ be a graph and let $D \in \Div(\Gamma)$ be an effective special divisor. Then
\[r(D)\leq \frac{1}{2}\deg D.\]
\end{theorem}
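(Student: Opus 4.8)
The plan is to deduce Clifford's theorem from the Baker--Norine Riemann--Roch theorem for graphs, in exact parallel with the classical deduction of Clifford's theorem on a curve from Riemann--Roch. Recall the Riemann--Roch identity of \cite{BakerN:RR}: for every $D \in \Div(\Gamma)$,
\[ r(D) - r(K_\Gamma - D) = \deg D - g(\Gamma) + 1. \]
Applying this with $D = 0$ --- using that $r(0) = 0$, since subtracting an effective divisor of degree $1$ from $0$ produces a divisor of negative degree, which cannot be equivalent to an effective one --- and then with $D = K_\Gamma$ gives $r(K_\Gamma) = g(\Gamma) - 1$, where we use $\deg K_\Gamma = 2 g(\Gamma) - 2$.

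The one extra ingredient I would establish is a subadditivity lemma: if $D_1, D_2 \in \Div(\Gamma)$ both have nonnegative rank, then $r(D_1 + D_2) \geq r(D_1) + r(D_2)$. To prove it, set $r_i = r(D_i) \geq 0$; note in passing that $r_i \leq \deg D_i$, since for an effective divisor $E_i$ of degree $r_i$ the divisor $D_i - E_i$ is linearly equivalent to an effective one and hence has nonnegative degree. Now let $E$ be any effective divisor on $\Gamma$ of degree $r_1 + r_2$. Since $E \geq 0$ and $\deg E = r_1 + r_2 \geq r_1$, we may choose an effective $E_1$ with $0 \leq E_1 \leq E$ and $\deg E_1 = r_1$, and put $E_2 = E - E_1 \geq 0$, of degree $r_2$. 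By the definition of rank there are effective divisors $F_1, F_2$ with $D_i - E_i \sim F_i$ for $i = 1,2$, so that $D_1 + D_2 - E \sim F_1 + F_2 \geq 0$. As $E$ was arbitrary, $r(D_1 + D_2) \geq r_1 + r_2$.

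Now combine the pieces. Let $D$ be an effective special divisor, so that $r(D) \geq 0$ (because $D \geq 0$) and $r(K_\Gamma - D) \geq 0$ (by the definition of special). Subadditivity applied to the pair $D$, $K_\Gamma - D$ gives
\[ r(D) + r(K_\Gamma - D) \leq r(K_\Gamma) = g(\Gamma) - 1, \]
and adding this to the Riemann--Roch identity $r(D) - r(K_\Gamma - D) = \deg D - g(\Gamma) + 1$ cancels the terms involving $K_\Gamma - D$ and yields $2\, r(D) \leq \deg D$, which is the assertion.

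The only substantive input here is graph Riemann--Roch itself; the subadditivity lemma and the final bookkeeping are purely formal manipulations of the definition of rank. Thus the main obstacle, were one unwilling to quote \cite{BakerN:RR}, would be to reconstruct the proof of Riemann--Roch for graphs --- the theory of $v$-reduced divisors, the ``dollar game'' criterion for when a divisor class contains an effective representative, and the symmetry argument relating a divisor to its complement with respect to $K_\Gamma$. Since in this paper we are content to cite that theorem, the argument above suffices.
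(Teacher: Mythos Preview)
Your argument is correct. The paper itself does not supply a proof of this statement --- it is quoted as Corollary~3.5 of \cite{BakerN:RR} --- so there is no ``paper's proof'' to compare against beyond the citation. What you have written is precisely the standard deduction of Clifford's inequality from the Riemann--Roch theorem, and it matches how Baker and Norine themselves derive Corollary~3.5 from their Theorem~1.12 (your subadditivity lemma is their Lemma~3.4). One cosmetic remark: the aside that $r_i \leq \deg D_i$ is never used in the rest of the argument and could be dropped.
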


When $\scrX_\pp$ is totally degenerate, this theorem is sufficient for the
proof of Theorem \ref{T:mainthm}. In general
though, the dual graph $\Gamma$ of $\scrX_{\pp}$ may have
\emph{smaller} genus than $\scrX_{\pp}$, and the bound on the degree $\deg
D_{\chab,\pp}$ of the Chabauty divisor will be $2r + 2(g(\scrX_{\pp}) - g(\Gamma)) >
2r$, which is not sufficient for the application to the
Chabauty-Coleman bound.

\subsection{Upper bounds}
\label{ss:upperBounds}

We now observe that these rank functions provide upper bounds for the
dimensions of linear subspaces on $X_\pp$.  For $r_{\num}$, this is the specialization lemma of Baker \cite{BakerSpec}.


\begin{proposition} (Compare \cite[Lemma 2.4]{BakerSpec}) 
\label{L:upperBounds}  Suppose that $X_\pp(K_\pp)$ is infinite.  
Let $D$ be a divisor in $\Div(X_\pp(K_\pp))$.  Then $r_X(D)=h^0(X_\pp,\calO_{X_\pp}(D))-1$.
\end{proposition}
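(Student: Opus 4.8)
The plan is to show that the four-step condition defining $r_X$ is equivalent to effectivity of the corresponding linear system on the generic fiber, and then to reproduce the classical argument identifying the two descriptions of the rank of a divisor on a smooth curve with infinitely many rational points. Throughout, for $D' \in \Div(X_\pp(K_\pp))$ let $\cd'$ denote its closure on $\scrX$.

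\emph{Step 1: passage to the generic fiber.} I would first establish that $\cd'$ satisfies conditions $(1)$--$(4)$ for some twist $\varphi$ if and only if $h^0(X_\pp,\calO_{X_\pp}(D')) \geq 1$. For the forward implication, conditions $(2)$--$(4)$ produce a section $s \in H^0(\scrX, \calO_\scrX(\cd' + V_\varphi))$ which restricts on each component $X_v$ to the regular (nonzero) section $s_v$; thus $s$ does not vanish identically on $\scrX_\pp$, so $s \neq 0$, and since $\scrX$ is integral with generic point lying on $X_\pp$ the restriction $s|_{X_\pp}$ is a nonzero section of $\calO_\scrX(\cd' + V_\varphi)|_{X_\pp} = \calO_{X_\pp}(D')$. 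For the converse, if $h^0(X_\pp,\calO_{X_\pp}(D')) \geq 1$ then $D'$ is linearly equivalent on $X_\pp$ to an effective divisor $D''$, so $\calO_\scrX(\cd')$ and $\calO_\scrX(\overline{D''})$ differ by a line bundle trivial on $X_\pp$. Since $X_\pp = \scrX \setminus \scrX_\pp$, the kernel of $\Pic\scrX \to \Pic X_\pp$ is generated by the classes of the components $X_v$ — given a rational function $f \in K(\scrX)^\times = K_\pp(X_\pp)^\times$ realizing a principal divisor on $X_\pp$, the divisor $\divv_\scrX(f)$ differs from $\divv_{X_\pp}(f)$ only by a vertical divisor — so this discrepancy is $\calO_\scrX(V_\varphi)$ for some twist $\varphi$. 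Then the section of $\calO_\scrX(\overline{D''}) = \calO_\scrX(\cd' + V_{-\varphi})$ cutting out the horizontal effective divisor $\overline{D''}$ verifies $(1)$--$(4)$ for the twist $-\varphi$: being horizontal, $\overline{D''}$ contains no component of $\scrX_\pp$, so the section is nonzero on every $X_v$, while conditions $(3)$ and $(4)$ hold automatically since it is a genuine global section on $\scrX$. Applying this with $D' = D-E$ gives
\[
r_X(D) \geq r \iff h^0\!\big(X_\pp, \calO_{X_\pp}(D-E)\big) \geq 1 \ \text{ for every effective } E \in \Div(X_\pp(K_\pp)) \text{ of degree } r .
\]

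\emph{Step 2: comparison with $h^0$.} Put $n = h^0(X_\pp, \calO_{X_\pp}(D))$. If $r \leq n-1$, then for every effective divisor $E$ of degree $r$ the vanishing conditions at the points of $E$ drop $h^0$ by at most $r$, so $h^0(\calO_{X_\pp}(D-E)) \geq n - r \geq 1$, whence $r_X(D) \geq n-1$ by Step 1. For the opposite inequality I would use that $X_\pp(K_\pp)$ is infinite: whenever $h^0(\calO_{X_\pp}(D')) \geq 1$, the base locus of $|D'|$ is a finite set of closed points, so there is a point $P \in X_\pp(K_\pp)$ off it, and then $h^0(\calO_{X_\pp}(D'-P)) = h^0(\calO_{X_\pp}(D')) - 1$. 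Iterating this $n$ times starting from $D$ yields an effective divisor $E = P_1 + \cdots + P_n \in \Div(X_\pp(K_\pp))$ of degree $n$ with $h^0(\calO_{X_\pp}(D-E)) = 0$; by Step 1, $D - E$ fails $(1)$--$(4)$ for every twist, so $r_X(D) \leq n - 1$. (When $n = 0$ the first half is vacuous and the second gives $E = 0$, so $r_X(D) = -1$.) Combining the two bounds gives $r_X(D) = n - 1 = h^0(X_\pp, \calO_{X_\pp}(D)) - 1$.

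\emph{Expected main obstacle.} The substantive point is the converse in Step 1: recognizing the ``twist'' degree of freedom in $(1)$--$(4)$ as precisely the kernel of $\Pic\scrX \to \Pic X_\pp$, and checking that the defining section of a horizontal effective divisor really satisfies all four steps. Everything else is the classical equality between the ``moving'' definition of $r_X$ and $\dim|D|$ on a smooth curve, plus the bookkeeping of base loci that forces the hypothesis $\#X_\pp(K_\pp) = \infty$.
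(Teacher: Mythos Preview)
Your proposal is correct and follows essentially the same approach as the paper: the paper also extends a section from $X_\pp$ to $\scrX$, absorbs the vertical part of its divisor into a twist, and uses that the resulting horizontal effective divisor contains no component of $\scrX_\pp$ to verify conditions (1)--(4), while the other inequality likewise comes from restricting a section on $\scrX$ back to $X_\pp$ and invoking $\#X_\pp(K_\pp)=\infty$. Your organization---first isolating the equivalence ``$(1)$--$(4)$ hold for $D'$ iff $h^0(\calO_{X_\pp}(D'))\geq 1$'' and then running the classical rank argument---is a clean repackaging of the same content.
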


\begin{proof}
Let $r=h^0(X_\pp,\calO_{X_\pp}(D))-1$.  Let $E\in \Div(X_\pp(K_\pp))$ be an effective divisor of degree $r$.  Because the 
condition that a section of $\calO_{X_\pp}(D)$ vanishes at $E$ imposes at most $r$ conditions, one can 
find a section $s$ on $X_\pp$ vanishing at $E$.  Let $\cd=\overline{D}$ and extend the section $s$ as 
a rational section of $\OO_\scrX(\calD)$.   Decompose the divisor of $s$ as the sum of a 
horizontal divisor $H$ and a vertical divisor $V =  -\sum_v \varphi(v)X_v$ for some $\varphi\in\calM(\Gamma)$.  If $s$ is considered as a section of 
$\OO_\scrX(\cd)(\varphi)$, then it is regular section and does not vanish along any component of the 
special fiber.   Using that convention, we set $s_v=s|_{X_v}$  for each component $X_v$ of the special fiber.  Then the $s_v$'s satisfy conditions (1)-(4). We conclude that $r_X(D) \geq r$.

Now we show $r_X(D)\leq r$.  Because $X_\pp(K_\pp)$ is infinite, there is a divisor $E\in \Div(X_\pp(K_\pp))$ of degree $r+1$ such that there is no non-zero section of $\OO_{X_\pp}(D)$ vanishing on $E$.  If $r_X(D)\geq r+1$, then there would be a twist $\varphi$ such that $\OO_\scrX(\cd-\ce)(\varphi)$ has a section $s$ that is non-zero on every component of $\scrX_\pp$.  Such a section $s$ restricts to the generic fiber as a non-zero section of $\calO_{X_\pp}(D-E)$.  This is impossible.
\end{proof}

\begin{remark}
If the residue field $\F_\pp$ is algebraically closed, then $\scrX_{\pp}^{\text{sm}}(\F_{\pp})$ has infinitely many points.  By the surjectivity of the reduction map, $X_\pp(K_\pp)$ is infinite and the above proposition applies.
\end{remark}

Since, by Definition \ref{R:rankFunctions}, $r_1(D)\geq r_2(D)\geq r_3(D)\geq r_4(D)$, we have the following corollary:

\begin{corollary} 
\label{c:rankUpperBound}
Suppose that $\F_\pp$ is algebraically closed.
Let $D$ be a divisor in $\Div(X_\pp(K_\pp))$.  Then $r_{\num}(\rho(D)),r_{\Ab}(\rho(D)),r_{\toric}(\rho(D))$ provide upper bounds for $h^0(X_\pp,\OO_{X_\pp}(D))-1$. 
\end{corollary}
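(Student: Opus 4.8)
The plan is to derive this as an immediate consequence of Proposition \ref{L:upperBounds} together with the monotonicity built into Definition \ref{R:rankFunctions}; the statement is essentially a formality, so I will just assemble the ingredients in order.

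First I would reduce to a situation where Proposition \ref{L:upperBounds} applies. Since $\F_\pp$ is algebraically closed, the smooth locus $\scrX_\pp^{\sm}(\F_\pp)$ is infinite, and the reduction map $\rho\colon X_\pp(K_\pp)\to\scrX_\pp^{\sm}(\F_\pp)$ of Subsection \ref{ss:dualGraphs} is surjective; hence $X_\pp(K_\pp)$ is infinite and Proposition \ref{L:upperBounds} yields $r_X(D)=h^0(X_\pp,\calO_{X_\pp}(D))-1$ (this is exactly the remark following that proposition). Next I would dispense with the $\rho$ appearing in the statement: as recorded in Subsection \ref{ss:rankFunctions}, $r_{\num}$ depends only on the dual graph $\Gamma$ and the multidegree $\mdeg(\cd)$ --- which factors through $\rho$ --- while $r_{\Ab}$ and $r_{\toric}$ depend only on $\rho(\cd)$, so that $r_{\num}(\rho(D))=r_{\num}(D)$, $r_{\Ab}(\rho(D))=r_{\Ab}(D)$, and $r_{\toric}(\rho(D))=r_{\toric}(D)$.

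Finally I would invoke the chain $r_1(D)\geq r_2(D)\geq r_3(D)\geq r_4(D)$ from Definition \ref{R:rankFunctions}, which holds because conditions $(1)$--$(4)$ of Subsection \ref{ss:rankFunctions} are successively more restrictive: if $D-E$ satisfies steps $1$ through $i$ then \emph{a fortiori} it satisfies steps $1$ through $i-1$. This gives $r_{\num}(D)\geq r_{\Ab}(D)\geq r_{\toric}(D)\geq r_X(D)$, and combining with the previous two observations yields
\[
r_{\num}(\rho(D))\geq r_{\Ab}(\rho(D))\geq r_{\toric}(\rho(D))\geq r_X(D)=h^0(X_\pp,\calO_{X_\pp}(D))-1.
\]
There is no real obstacle here; the only two points needing a moment's attention are checking the hypothesis of Proposition \ref{L:upperBounds} (handled by surjectivity of $\rho$ over an algebraically closed residue field) and noting that the $\rho$ in the statement is harmless because $r_{\num},r_{\Ab},r_{\toric}$ factor through reduction.
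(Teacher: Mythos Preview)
Your proof is correct and follows exactly the paper's approach: the paper simply states that the corollary follows from the chain $r_1(D)\geq r_2(D)\geq r_3(D)\geq r_4(D)$ built into Definition \ref{R:rankFunctions}, together with Proposition \ref{L:upperBounds} and the preceding remark that $\F_\pp$ algebraically closed forces $X_\pp(K_\pp)$ to be infinite. You have supplied the same argument with a bit more detail, including the observation that $r_{\num},r_{\Ab},r_{\toric}$ factor through $\rho$, which the paper records separately in the remark after Definition \ref{R:rankFunctions}.
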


\begin{remark}
\label{R:rankStrata}
 There are thus inequalities $r_{\num}(D) \geq  r_{\Ab}(D) \geq  r_{\toric}(D) \geq  r_X(D)$; see Section \ref{S:examples} for examples where the inequalities are strict.
\end{remark}

\begin{remark}
  The rank functions $r_{\Ab}$ and $r_{\toric}$ are sensitive to unramified field extensions.
  When one extends the residue field, one allows more choices for the effective divisor
  $E$, so the rank functions do drop with field extensions.  This is not
  a problem since they do stabilize at a finite extension.  The proof
  of that requires results from the next subsection.  To avoid this
  issue, we can pass to the maximal unramified field extensions.  This
  does not affect the specialization lemma or the  rank $r_X$.  In fact, extensions of the residue field give better bounds for $r_X$.
  \end{remark}

\subsection{Twist General Position}
Recall that the residue field of $\OO_{K_\pp}$ is
infinite.  All divisors $D_\pp$ on $\scrX_\pp$ will be supported at smooth
points.

\begin{lemma} 
\label{l:finiteness} 
Let $D$ be a divisor on $\Gamma$.  Then the set of twists that make $D$ effective,
\[S_D=\{\varphi\in\calM(\Gamma)_\sim\ |\ \Delta(\varphi)+D\geq 0\}\]
is finite.
\end{lemma}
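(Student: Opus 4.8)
The plan is to reduce the statement to an elementary finiteness property of the lattice of principal divisors on $\Gamma$. The point is that, for a connected graph, the Laplacian $\Delta$ descends to an \emph{injective} homomorphism $\calM(\Gamma)_\sim \hookrightarrow \Div(\Gamma)$ whose image lies in the degree-zero subgroup, and that the effectivity condition $\Delta(\varphi)+D\geq 0$ then confines $\Delta(\varphi)$ to a finite subset of $\Div(\Gamma)$.

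First I would record two basic facts. (a) For every $\varphi\in\calM(\Gamma)$ one has $\deg\Delta(\varphi)=0$: in the defining double sum, an edge $e$ joining $v$ and $w$ contributes $\varphi(w)-\varphi(v)$ to the coefficient of $v$ and $\varphi(v)-\varphi(w)$ to the coefficient of $w$, so the coefficients of $\Delta(\varphi)$ sum to $0$. (b) The graph $\Gamma$ is connected — it is the dual graph of the special fiber $\scrX_\pp$, which is connected since $\scrX$ is a proper flat model of the geometrically connected curve $X_\pp$ — and for a connected graph the kernel of $\Delta$ consists exactly of the constant functions, because if $\Delta\varphi=0$ then $0=\sum_v\varphi(v)\,\Delta(\varphi)(v)=-\sum_{e=vw}(\varphi(v)-\varphi(w))^2$, forcing $\varphi$ to be constant on $\Gamma$. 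Hence $\Delta$ induces an injection $\calM(\Gamma)_\sim\hookrightarrow\Div(\Gamma)$, identifying $\calM(\Gamma)_\sim$ with the subgroup $\operatorname{Prin}(\Gamma):=\Delta(\calM(\Gamma))\subseteq\Div(\Gamma)$.

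Under this identification, $S_D$ corresponds to the set $\{\,P\in\operatorname{Prin}(\Gamma)\mid P+D\geq 0\,\}$, so it is enough to prove this set is finite. If $P=\sum_v P(v)(v)$ satisfies $P+D\geq 0$, then $P(v)\geq -D(v)$ for every vertex $v$; combining this with $\sum_v P(v)=\deg P=0$ yields, for each $v$,
\[
P(v)\;=\;-\sum_{w\neq v}P(w)\;\leq\;\sum_{w\neq v}D(w).
\]
Thus each coordinate $P(v)$ lies in the fixed (possibly empty) integer interval $\bigl[-D(v),\ \sum_{w\neq v}D(w)\bigr]$, so $P$ ranges over a finite subset of the free abelian group $\Div(\Gamma)$. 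Therefore $\{\,P\in\operatorname{Prin}(\Gamma)\mid P+D\geq 0\,\}$, and hence $S_D$, is finite.

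This argument is entirely elementary, so there is no real obstacle; the only ingredient beyond linear algebra is the connectedness of $\scrX_\pp$ (equivalently, $\ker\Delta=$ constants), which is where the standing hypothesis that $\scrX$ is a model of an integral curve enters. Should one wish to dispense with it, the same reasoning applies on each connected component of $\Gamma$ separately, since twisting and the effectivity condition both decompose over components; alternatively one can bound $\max_v\varphi(v)-\min_v\varphi(v)$ directly by a Dirichlet-energy estimate. The version above seems cleanest.
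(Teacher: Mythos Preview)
Your proof is correct and takes a genuinely different route from the paper's. The paper normalizes a representative $\varphi$ so that $\max_v\varphi(v)=0$ and then runs a maximum-principle propagation: from the effectivity inequality at each vertex it extracts $\varphi(w_0)\geq \deg(v)\varphi(v)-D(v)$ for any neighbor $w_0$ of $v$ (using $\varphi\leq 0$ everywhere), and iterates outward from the maximizing vertex to get an explicit lower bound on all values of $\varphi$. Your argument instead passes to the image $\Delta(\varphi)\in\Div(\Gamma)$, uses that $\ker\Delta$ consists of constants on a connected graph so that $\calM(\Gamma)_\sim\hookrightarrow\Div(\Gamma)$, and then bounds each coordinate of $P=\Delta(\varphi)$ by combining $P\geq -D$ with $\deg P=0$. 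Your approach is cleaner and yields the sharper, uniform box $P(v)\in[-D(v),\sum_{w\neq v}D(w)]$ with no dependence on graph distance; the paper's bound degrades exponentially with the diameter but has the virtue of being a direct estimate on $\varphi$ itself rather than on $\Delta(\varphi)$, and does not need to invoke injectivity of the Laplacian separately.
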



\begin{proof}
Given $\varphi\in \calM(\Gamma)$, we may translate to suppose $\max(\varphi(v))=0$.  We
show that $\varphi$ is bounded below by a constant in terms of $\mdeg(D_\pp)$.
Suppose $\max(\varphi)$ is achieved at some vertex $v_0$.
Then we have
\[\sum_{e=wv} (\varphi(w)-\varphi(v)) + D(v) \geq 0.\]  
for all $v$.  For $w_0$ adjacent to $v$, we have
\[\varphi(w_0)
\geq \varphi(v) - D(v)+\sum_{\substack{e=wv\\w\neq w_0}} (\varphi(v)-\varphi(w))
\geq \deg(v)\varphi(v)-D(v).\]
Taking $v = v_0$ and continuing outward, we see that $\varphi$ is bounded below.  
\end{proof}



\begin{definition} Let $D_\pp$ be a divisor on $\scrX_\pp^{\sm}$.  An effective divisor $E_\pp$ is said to be in \defi{twist general position} with respect to $D_\pp$ if for all components $X_v$ and all twists 
$\varphi\in \calM(\Gamma)$
with $\mdeg(\OO_{\scrX_\pp}(D_\pp)(\varphi))\geq 0$, 
\[h^0(\OO_{\scrX_\pp}(D_\pp-E_\pp)(\varphi)|_{X_v})=\max(0,h^0(\OO_{\scrX_\pp}(D_\pp)(\varphi)|_{X_v})-\deg(\OO_{\scrX_\pp}(E_\pp)|_{X_v})).\]
\end{definition}

In other words $E_\pp$ imposes $\deg(E_\pp|_{X_v})$ conditions on
sections of $\OO_{\scrX_\pp}(D_\pp)|_{X_v}$.  There exist twist
general position divisors of any multidegree. Note also that to make
sense of the expression $\mdeg(\OO_{\scrX_\pp}(D_\pp)(\varphi))$, it
suffices to pick a lift $\calD$ of $D_\pp$ and consider $\mdeg(\OO_{\scrX}(\calD)(\varphi))$ 

\begin{lemma} Let $D_\pp$ be a divisor on $\scrX_\pp$.  Let
  $E\in\Div(\Gamma)$.  Then there exists a divisor
  $E_\pp\in\Div(\scrX_\pp^{\text{sm}}(\F_\pp))$ such that
  $\mdeg(E_\pp)=E$ and $E_\pp$ is in twist general position with
  respect to $D_\pp$.
\end{lemma}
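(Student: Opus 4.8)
The plan is to construct $E_\pp$ one component at a time: on each component $X_v$ of $\scrX_\pp$ I will place $E(v)$ smooth $\F_\pp$-points (assuming $E\geq 0$, as we may; the general case is analogous) chosen to be in general position with respect to a suitable \emph{finite} collection of invertible sheaves on $X_v$, and then set $E_\pp$ to be the sum over $v$ of these points. This is adapted to the statement because twist general position is, by its very definition, a condition involving only the restrictions $\OO_{\scrX_\pp}(D_\pp)(\varphi)|_{X_v}$ to individual components, so it suffices to treat each component separately.

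The first step is to cut the problem down to finitely many sheaves. Fix a lift $\calD$ of $D_\pp$ to $\scrX$. By Lemma~\ref{l:centralfiber}, $\mdeg(\OO_\scrX(\calD)(\varphi)) = \mdeg(\calD)+\Delta(\varphi)$, so the hypothesis ``$\mdeg(\OO_{\scrX_\pp}(D_\pp)(\varphi))\geq 0$'' occurring in the definition of twist general position is exactly ``$\Delta(\varphi)+\mdeg(D_\pp)\geq 0$'', which by Lemma~\ref{l:finiteness} holds for only finitely many classes $\varphi\in\calM(\Gamma)_\sim$; moreover, since $\scrX_\pp$ is linearly equivalent to $0$ on $\scrX$, changing $\varphi$ by a constant function does not affect $\OO_\scrX(\calD)(\varphi)$ up to isomorphism. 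Hence for each $v$ there is a finite set $\calL_v$ of invertible sheaves on $X_v$ --- namely the sheaves $\OO_{\scrX_\pp}(D_\pp)(\varphi)|_{X_v}$ for the relevant $\varphi$ --- against which the points of $E_\pp$ lying on $X_v$ must be placed in general position. (By Lemma~\ref{l:centralfiber} each member of $\calL_v$ differs from $\OO(D_\pp)|_{X_v}$ by a divisor supported on the nodes of $X_v$, hence away from the smooth locus where the points of $E_\pp$ will sit.)

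The second step is a standard general-position argument on the integral projective curve $X_v$. Writing $e_v:=E(v)$, I would choose $P_1,\dots,P_{e_v}\in X_v^{\sm}(\F_\pp)$ inductively: suppose $P_1,\dots,P_k$ are chosen with $h^0\bigl(L(-P_1-\cdots-P_k)\bigr)=\max\bigl(0,h^0(L)-k\bigr)$ for every $L\in\calL_v$. For each $L\in\calL_v$ with $h^0\bigl(L(-P_1-\cdots-P_k)\bigr)>0$, the base locus of the nonzero linear system $|L(-P_1-\cdots-P_k)|$ is a finite subset of $X_v$; so, using that $X_v^{\sm}(\F_\pp)$ is infinite ($\F_\pp$ being infinite --- enlarge it if necessary) while $\calL_v$ is finite, one picks $P_{k+1}\in X_v^{\sm}(\F_\pp)$ avoiding all of these finitely many base loci. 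Via the length-one evaluation exact sequence at $P_{k+1}$, the quantity $h^0$ then drops by exactly $1$ for each such $L$ and stays $0$ for the rest, which maintains the inductive hypothesis with $k+1$ in place of $k$. After $e_v$ steps the chosen points impose the maximal possible number of conditions on every $L\in\calL_v$.

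Finally I would set $E_\pp$ to be the sum over $v$ of the points chosen on $X_v$. Each such point lies on a unique component and is a smooth point of $\scrX_\pp$, so its closure meets $\scrX_\pp$ transversally there; hence $\mdeg(E_\pp)=\sum_v e_v(v)=E$. And for any twist $\varphi$ with $\mdeg(\OO_{\scrX_\pp}(D_\pp)(\varphi))\geq 0$ and any $v$, putting $L:=\OO_{\scrX_\pp}(D_\pp)(\varphi)|_{X_v}\in\calL_v$ one has $\OO_{\scrX_\pp}(D_\pp-E_\pp)(\varphi)|_{X_v}=L(-P_1-\cdots-P_{e_v})$ and $\deg(\OO_{\scrX_\pp}(E_\pp)|_{X_v})=e_v$, so the identity from the second step is precisely the defining equality of twist general position. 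The step I expect to be the real obstacle is the first one: the reduction to a \emph{finite} list $\calL_v$ using Lemma~\ref{l:finiteness} --- without finiteness one would have to dodge infinitely many base loci on $X_v$, which could exhaust the curve. (A minor auxiliary point, already noted, is ensuring $X_v^{\sm}(\F_\pp)$ is large enough, which is the reason for working over an infinite residue field.)
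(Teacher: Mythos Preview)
Your proof is correct and follows the same strategy as the paper: reduce to finitely many twists via Lemma~\ref{l:finiteness}, then on each component choose a degree-$E(v)$ divisor in general position with respect to this finite list of sheaves. The only cosmetic difference is that the paper phrases the second step as picking an $\F_\pp$-point of a nonempty Zariski-open subset of $\Sym^{E(v)}(X_v\cap\scrX_\pp^{\sm})$, whereas you build the divisor point by point avoiding base loci; both arguments are standard and equivalent (just be sure your points avoid the nodes of $\scrX_\pp$ on $X_v$, not merely the singular points of $X_v$ itself).
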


\begin{proof}
For any twist $\varphi$ and component $X_v$, the set of
effective divisors $F_v$ supported on $X_v \cap \scrX_{\pp}^{\sm}$ of degree $E(v)$ with 
\[h^0(\OO_{X_\pp}(D_\pp)(\varphi)|_{X_v}-F_v)=\max(0,h^0(\OO_{X_\pp}(D_\pp)(\varphi)|_{X_v})-E(v))\]
is non-empty and Zariski open in $\Sym^{E(v)}(X_v^{\sm})$.  Call this set $U_{\varphi,v}$.  
Now let $U_v=\cap_\varphi U_{\varphi,v}$ where the intersection is over the finite set constructed in Lemma \ref{l:finiteness}.  
Choose a divisor $E_v$ in $U_v$ for each $v$.  The sum of divisors, $E_\pp=\sum E_v$ is the desired divisor.
\end{proof}

\subsection{Clifford Bounds}

We prove the Clifford bound for $r_{\Ab}$.  By Remark
\ref{R:rankStrata} this implies the corresponding bounds for
$r_{\toric}$.  We suppose that $\F_\pp$ is algebraically closed.  We
also pick once and for all a (not necessarily effective) divisor
$K_{\scrX_\pp}\in\Div(\scrX_\pp^{\sm}(\F_\pp))$ in the class of the
canonical bundle on the special fiber. 

Such a divisor exists by Bertini's theorem; indeed, by
\cite{liu}*{7.1, Lemma 1.31} any line
bundle on a projective variety is a difference of very ample line
bundles $\scrL$ and $\scrL'$, and by Bertini's theorem
\cite{hart}*{Theorem 8.18} we
can choose a section of each which does not vanish at the finitely many singular points of $\scrX_\pp$.

\begin{definition}
  Let $L$ be a line bundle on $\scrX_\pp$.  For a vertex $v$ of the dual
  graph $\Gamma$, we say $v$ is \defi{Clifford with respect to} $L$ if
  $\deg(L|_{X_v})<2g$. 
\end{definition}
 In this case, the usual Clifford bounds (if $L$ is special) or the Riemann-Roch theorem (if $L$ is non-special) imply
  \[h^0(L|_{X_v})<g+1.\]
  Consequently, for a generic degree $g$ divisor $D$ on $X_v$, we do not expect $h^0(L|_{X_v}-D)$ to have a non-zero section.

\begin{proposition} 
\label{p:clifford} 
Let $D_\pp$ be an effective divisor on $\scrX_\pp$
supported at smooth points.  Then,
\[r_{\Ab}(K_{\scrX_\pp}-D_\pp)\leq g-\frac{\deg{D_\pp}}{2}-1.\]
\end{proposition}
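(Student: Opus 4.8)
The plan is to reduce the Clifford bound for $r_{\Ab}$ to the classical Clifford/Riemann--Roch theorems for the components $X_v$ of $\scrX_\pp$, together with the Baker--Norine Clifford theorem for the dual graph $\Gamma$ (Theorem \ref{T:cliffordGraph}). Suppose for contradiction that $r_{\Ab}(K_{\scrX_\pp}-D_\pp)\geq g-\tfrac{1}{2}\deg D_\pp$. By definition of $r_{\Ab}$ and the twist general position lemma of the previous subsection, I would pick an effective divisor $E_\pp$ of degree $r := g-\tfrac{1}{2}\deg D_\pp$ supported on smooth points, in twist general position with respect to $K_{\scrX_\pp}-D_\pp$ (and simultaneously with respect to $K_{\scrX_\pp}$, after possibly enlarging the open sets in that lemma), whose multidegree $\mdeg(E_\pp)=:E$ will be chosen below. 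By hypothesis there is then a twist $\varphi\in\calM(\Gamma)$ with $\mdeg\big(\OO_{\scrX_\pp}(K_{\scrX_\pp}-D_\pp-E_\pp)(\varphi)\big)\geq 0$ and, for every component $X_v$, a regular section $s_v$ of $\OO_{\scrX_\pp}(K_{\scrX_\pp}-D_\pp-E_\pp)(\varphi)|_{X_v}$ that is not identically zero.

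The key computation is to track multidegrees vertex by vertex. Write $L = \OO_{\scrX_\pp}(K_{\scrX_\pp}-D_\pp)(\varphi)$, a twist of (a representative of) the canonical class, so that by Lemma \ref{l:centralfiber} $\mdeg(L) = \mdeg(K_{\scrX_\pp}) - \mdeg(D_\pp) + \Delta(\varphi) = K_\Gamma + \sum_v 2g(X_v)(v) - \mdeg(D_\pp) + \Delta(\varphi)$. On each $X_v$ the existence of a nonzero section of $L|_{X_v}(-E_\pp|_{X_v})$ forces, via twist general position, $h^0(L|_{X_v}) \geq \deg(E_\pp|_{X_v}) + 1 \geq 1$. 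Now split the vertices into those that are Clifford with respect to $L$ (where $\deg(L|_{X_v}) < 2g(X_v)$, so Clifford or Riemann--Roch gives $h^0(L|_{X_v}) \leq \tfrac{1}{2}\deg(L|_{X_v}) + 1$, hence $\deg(E_\pp|_{X_v}) \leq \tfrac{1}{2}\deg(L|_{X_v})$) and those that are not (where $\deg(L|_{X_v}) \geq 2g(X_v)$). On the non-Clifford vertices one instead uses that a section of $L|_{X_v}$ nonvanishing along $X_v$, restricted to the generic fiber, cannot survive after subtracting $E_\pp$ unless the genus-zero-type accounting on $\Gamma$ absorbs it; concretely I would push the whole configuration down to $\Gamma$ by taking multidegrees and show that $\mdeg(K_{\scrX_\pp}-D_\pp)(\varphi) - E$ being effective up to $\sim$ on $\Gamma$, combined with the per-vertex inequalities, yields $\deg E \leq \tfrac{1}{2}(\deg K_{\scrX_\pp} - \deg D_\pp) = g - 1 - \tfrac{1}{2}\deg D_\pp$. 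Choosing $E$ so that $\deg E = r = g - \tfrac{1}{2}\deg D_\pp$ then gives $g - \tfrac{1}{2}\deg D_\pp \leq g - 1 - \tfrac{1}{2}\deg D_\pp$, a contradiction.

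The delicate point — and the place the argument genuinely needs the hybrid "Abelian" rank rather than the purely combinatorial one — is the interaction at the non-Clifford vertices, i.e. components of large genus on which $L|_{X_v}$ is non-special. There the naive bound $h^0(L|_{X_v}) \leq \tfrac12\deg(L|_{X_v})+1$ fails, so one must argue that such a vertex forces $\deg(L|_{X_v})$ to be correspondingly large, and then feed this back through the Baker--Norine Clifford inequality $r(\mdeg L)\leq\tfrac12\deg\mdeg L$ on $\Gamma$; the genus term $\sum_v g(X_v)$ in $\mdeg(K_{\scrX_\pp})$ versus $K_\Gamma$ has to be bookkept so that it is exactly matched by the $h^0$ contributions coming from these high-genus components. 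I would therefore organize the proof as: (i) set up the contradiction hypothesis and produce $(\varphi, \{s_v\})$; (ii) choose $E_\pp$ in twist general position and in a multidegree $E$ to be optimized; (iii) prove the per-vertex inequality $h^0(L|_{X_v}) - \deg(E|_v) \leq \max(0, \tfrac12\deg(L|_v)+1 - \deg(E|_v))$ splitting on Clifford/non-Clifford; (iv) apply Baker--Norine Clifford to $\mdeg(L) - E$ on $\Gamma$ and sum; (v) derive the numerical contradiction. Step (iii)–(iv) interface is the main obstacle.
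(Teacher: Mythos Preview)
Your proposal has a genuine gap exactly at the step you flag as the main obstacle. You fix $E_\pp$ in twist general position first, and then the contradiction hypothesis hands you some twist $\varphi$ together with sections $s_v$. On $\varphi$-Clifford vertices you correctly extract $\deg(E_\pp|_{X_v}) \leq \tfrac{1}{2}\deg(L|_{X_v})$. But on non-Clifford vertices ($\deg(L|_{X_v}) \geq 2g(X_v)$), Riemann--Roch only gives $h^0(L|_{X_v}) = \deg(L|_{X_v}) - g(X_v) + 1$, hence merely $\deg(E_\pp|_{X_v}) \leq \deg(L|_{X_v}) - g(X_v)$. Summing both types yields
\[
\deg E_\pp \;\leq\; \tfrac{1}{2}\deg L \;+\; \sum_{v \notin \Cliff(\varphi)} \Bigl(\tfrac{1}{2}\deg(L|_{X_v}) - g(X_v)\Bigr),
\]
and the correction term is \emph{non-negative}, so no contradiction follows. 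Your appeal to Baker--Norine does not rescue this: for the particular $\varphi$ already produced you only know that $\mdeg(L) - E \geq 0$, which carries no rank information on $\Gamma$; moreover $\mdeg(L)$ is $K_\Gamma + \sum_v 2g(X_v)(v) - \mdeg(D_\pp) + \Delta\varphi$, not of the form $K_\Gamma - (\cdot)$, so Theorem~\ref{T:cliffordGraph} does not apply to it. The structural problem is that $\varphi$ is chosen by the adversary after you commit to $E_\pp$; different twists have different Clifford/non-Clifford splits, and a single multidegree $E$ fixed in advance cannot be right for all of them.

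The paper inverts the quantifiers. It first runs an extremal argument over the finite set $S_{\mdeg(D_\pp)}$ of admissible twists: pick $\varphi$ minimizing $|\Cliff(\varphi)|$, and among those, maximizing $M(\varphi)=\max_{v\in\Cliff(\varphi)}\deg((K_{\scrX_\pp}-D_\pp)(\varphi)|_{X_v})$. Only then is $Q_\pp$ built, tailored to this $\varphi$: put $\lfloor M(\varphi)/2 + 1\rfloor$ points on the maximizing Clifford vertex $v_\varphi$, and $g(X_w)$ points on each non-$\varphi$-Clifford vertex $w$, all in twist general position. A direct degree count gives $\deg Q_\pp \leq g - \tfrac{1}{2}\deg D_\pp$. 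The extremality of $\varphi$ is then precisely what defeats every other twist $\varphi'$: either $|\Cliff(\varphi')|>m$ and some $\varphi'$-Clifford $w$ was non-$\varphi$-Clifford (so $g(X_w)$ generic points kill the section on $X_w$), or $|\Cliff(\varphi')|=m$ and either $v_\varphi\in\Cliff(\varphi')$ with $M(\varphi')\leq M(\varphi)$ (so the points on $X_{v_\varphi}$ kill the section there), or $v_\varphi\notin\Cliff(\varphi')$ and again some vertex has swapped status. The case $\deg(K_\Gamma-\mdeg D_\pp)\geq 0$ is treated separately: Baker--Norine Clifford on $\Gamma$ is used to find a small $Q'\in\Div(\Gamma)$ destroying $\num$-effectiveness, and then $g(X_v)$ general points are added on every component. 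This constructive, twist-first approach is the idea your outline is missing.
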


\begin{remark}
  The idea of the proof is the following. For a vertex $v$
  corresponding to a component $X_v$ of $\scrX_\pp$,
\[ \deg (K_{\scrX_\pp})|_{X_v} = K_{\Gamma}(v) + 2g(X_v);\]
in particular, if $K_\Gamma-\mdeg(D_\pp)$ is not equivalent to an
effective divisor on $\Gamma$ (e.g., it may have negative degree), then for every
twist $\varphi$, there is a component $X_{v_\varphi}$ of $\scrX_\pp$
such that
\[\deg((K_{\scrX_\pp}-D_\pp)(\varphi)|_{X_{v_\varphi}}) < 2g(X_{v_\varphi}).\]
In particular, Clifford's theorem or the Riemann-Roch theorem for $X_{v_\varphi}$ implies that
\[h^0((K_{\scrX_\pp}-D_\pp)(\varphi)|_{X_{v_\varphi}}) < g(X_{v_\varphi})+1.\] The
upshot is that, if  $Q$ is a divisor in twist general position
such that $\deg Q|_{X_v} = g(X_v)$ for all $v$, then for every twist $\varphi$, $(K_{\scrX_\pp}-D_\pp- Q)(\varphi)|_{X_{v_\varphi}}$ has no sections,
so $r_{\ab}(K_{\scrX_\pp}-D_\pp) < \deg Q = \sum g(X_v)$.
\vspace{3pt}

This does not quite give the desired bound, since it may happen that
$\sum g(X_v) > g-\frac{\deg{D_\pp}}{2}-1$; the subtleties of the proof are:
\begin{itemize}
\item [1)] to find a component $X_v$ of $\scrX_\pp$ such that
  fewer than $g(X_v)$ points are necessary to force the rank of 
  $(K_{\scrX_\pp}-D_\pp- Q)(\varphi)|_{X_{v_\varphi}}$ to be $-1$, 
and
\item [2)] when $K_{\scrX_\pp}-D_\pp$ has sections on every
  component, we can add a divisor $Q'$ in twist general position 
   such that $K_{\scrX_\pp}-D_\pp - Q'$ has no
  sections and apply (1); we can keep the degree of $Q'$ small by
  applying the Riemann-Roch theorem for graphs to  $K_\Gamma-\mdeg(D_\pp)$.
\end{itemize}

\end{remark}

\begin{proof}[Proof of Proposition \ref{p:clifford}]
We first consider the case $\deg(K_\Gamma-\mdeg(D_\pp))<0$.  
Let $S_{\mdeg(D_\pp)}$ be the finite set of Lemma \ref{l:finiteness}.
If $S_{\mdeg(D_\pp)}$ is empty, the rank of $D_\pp$ is $-1$ and we are done.  So suppose $S_{\mdeg(D_\pp)}$ is non-empty.  For $\varphi\in S_{\mdeg(D_\pp)}$, let $\Cliff(\varphi)$ be the set of vertices
that are Clifford with respect to $(K_{\scrX_\pp}-D_\pp)(\varphi)$.  We call such vertices $\varphi$-Clifford.  By degree considerations, 
$\Cliff(\varphi)\neq \emptyset$ for all $\varphi\in S_{\mdeg(D_\pp)}$.   Let $m$ be the minimum of $\left|\Cliff(\varphi)\right|$ taken over all $\varphi\in S_{\mdeg(D_\pp)}$ and 
\[S_m=\{\varphi\in S_{\mdeg(D_\pp)}\,|\,\left|\Cliff(\varphi)\right|=m\}.\]
Now, let 
\[M(\varphi)=\max\{\deg((K_{\scrX_\pp}-D_\pp)(\varphi)|_{X_v})\,|\,v\in\Cliff(\varphi)\}.\]
Pick $\varphi\in S_m$ maximizing $M(\varphi)$.  Let $v_\varphi$ be a $\varphi$-Clifford vertex such that 
\[\deg((K_{\scrX_\pp}-D_\pp)(\varphi)|_{X_{v_\varphi}})=M(\varphi).\]
Now let $Q_\pp$ be a divisor consisting of 
$\lfloor \frac{M(\varphi)}{2}+1\rfloor$ points on $X_{v_\varphi}$ and $g(X_w)$
points on $X_w$ for each non-$\varphi$-Clifford $w$,  chosen in twist general position with respect to $K_{\scrX_\pp}-D_\pp$.  Note that $\deg(Q)\leq g-\frac{\deg{D_\pp}}{2}$.  

We claim that $r_{\Ab}(K_{\scrX_\pp}-D_\pp-Q_\pp)<0$.  Suppose to the contrary that there is a twist $\varphi'$ such that $(K_{\scrX_\pp}-D_\pp-Q_\pp)(\varphi')$ has a section on every component.  By construction $K_{\scrX_\pp}-D_\pp$ has at least $m$ $\varphi'$-Clifford vertices.  If it has more than $m$ $\varphi'$-Clifford vertices, then there is some vertex $w$ that is $\varphi'$-Clifford but not $\varphi$-Clifford.  Therefore, $Q_\pp$ contains $g(X_w)$ points on $X_w$.  Since $h^0((K_{\scrX_\pp}-D_\pp)(\varphi')|_{X_w})<g(X_w)+1$ by Clifford's theorem or the Riemann-Roch theorem on $X_w$, we can conclude that $h^0((K_{\scrX_\pp}-D_\pp-Q_\pp)(\varphi')|_{X_w})=0$.

We may suppose that $\Cliff(\varphi')=m$.  Now, if $v_{\varphi}$ is $\varphi'$-Clifford, then 
\[\deg((K_{\scrX_\pp}-D_\pp)(\varphi')|_{X_{v_\varphi}})\leq M(\varphi')\leq M(\varphi)\]
by construction. But then by Clifford's theorem
\[h^0((K_{\scrX_\pp}-D_\pp)(\varphi')|_{X_{v_\varphi}})\leq\frac{M(\varphi)}{2}+1\]
so
\[h^0((K_{\scrX_\pp}-D_\pp-Q)(\varphi')|_{X_{v_\varphi}})=0\]
Therefore, we may suppose that $v_\varphi$ is not $\varphi'$-Clifford.  But because $\left|\Cliff(\varphi')\right|=\left|\Cliff(\varphi)\right|$, there is some vertex $w$ that is $\varphi'$-Clifford but not $\varphi$-Clifford.  By the reasoning above, there cannot be a section of $(K_{\scrX_\pp}-D_\pp-Q)(\varphi')$ on that component. \\

We now consider the case $\deg(K_\Gamma-\mdeg(D_\pp))\geq 0$.  By
Baker-Norine's Clifford bounds (Theorem \ref{T:cliffordGraph}),
\[r_{\num}(K_\Gamma-\rho(D_\pp))\leq g(\Gamma) 
-\frac{\deg{D_\pp}}{2}-1.\]
Therefore, if we set $r=\lfloor g(\Gamma) 
-\frac{\deg{D_\pp}}{2}-1\rfloor$,
we may pick a divisor $Q'\in\Div(\Gamma)$ of degree $r+1$ such that 
$K_\Gamma-\mdeg(D_\pp)-Q'$ is not $\num$-linearly equivalent to an effective divisor on $\Gamma$.   Now, pick $Q'_\pp\in\Div(\scrX_\pp^{\text{sm}}(\F_\pp))$ with $\mdeg(Q'_\pp)=Q'$.
Now, for each $v$, let $Q_v\in\Div(\scrX_\pp^{\text{sm}}(\F_\pp))$ be a divisor of degree $g(v)$ in twist general position with respect to $K_{\scrX_\pp}-D_\pp-Q'_\pp$.  Let 
\[Q_\pp=Q'_\pp+\sum_v Q_v.\]
Note 
\[\deg(Q_\pp)=r+1+\sum g(X_v)=\lfloor g(\Gamma) 
+\sum g(X_v)-\frac{\deg{D_\pp}}{2}-1\rfloor+1\leq g-\frac{\deg(D_\pp)}{2}.\]
We claim that $r_{\Ab}(K_{\scrX_\pp}-D_\pp-Q_\pp)<0$.  In other words, even after twisting by some $\varphi$, there is some component $X_v$ such that $K_{\scrX_\pp}-D_\pp-Q_\pp|_{X_v}$ has no section.  By the definition of Baker-Norine rank, for any twist $\varphi$ there is a vertex $v$ such that $(K_\Gamma-\mdeg(D_\pp)-\mdeg(Q_\pp))(v)<0$.  Consequently, 
\[\deg((K_{\scrX_\pp}-D_\pp-Q'_\pp)(\varphi)|_{X_v})<2g\]
and the usual Clifford or Riemann-Roch bounds apply to show that 
\[h^0((K_{\scrX_\pp}-D_\pp-Q'_\pp)(\varphi)|_{X_v})<g+1.\]
By our choice of $Q_\pp$ and the definition of twist general position:
\[h^0((K_{\scrX_\pp}-D_\pp-Q_\pp)(\varphi)|_{X_v})=0.\]
\end{proof}

\begin{remark}
\label{r:clifford}
It follows from the proof that one can in fact choose the divisor $E_\pp$
to avoid any finite set of points on $\scrX_\pp$. In other words,
there exists an effective divisor $E_\pp \in \Div(\scrX_\pp^{\sm}(\F_\pp))$  such that
\begin{itemize}
\item [(i)]   $\deg E_\pp \leq \frac{1}{2}\deg(K_{X_\pp}-D_\pp)$,  
\item [(ii)] For any twist $\varphi$, $\calO_{\scrX}(\varphi)|_{X_v}\otimes \calO_{\scrX_\pp}(K_{\scrX_\pp}-D_\pp - E_\pp)|_{X_v}$ has no non-zero sections for some component $X_v$ of $\scrX_\pp$, and
\item [(iii)] $\Supp(E_\pp) \cap \Supp(K_{\scrX_\pp}-D_\pp) = \emptyset$. 
\end{itemize}

\end{remark}

\begin{remark} One can use our arguments to prove the analogue of Clifford's theorem in its usual form: let $D_\pp\in\Div(\scrX_{\pp}^{\text{sm}}(\F_\pp))$;
if $r_{\Ab}(K_{\scrX_\pp}-D_\pp)\geq 0$, then 
$r_{\Ab}(D_\pp)\leq \frac{\deg(D_\pp)}{2}.$  The hypothesis produces a twist $\varphi$ such that 
$(K_{\scrX_\pp}-D_\pp)(\varphi)$ has a section $s_v$ on every component $X_v$.  One applies the above argument to $(K_{\scrX_\pp})|_{X_v}-(s_v)$ on each component.  
Unfortunately, the zero-locus of $s_v$ is not necessarily supported on smooth points of $\scrX_\pp$ so the above proposition does not directly apply. 
\end{remark}

\begin{corollary} 
\label{c:rankOfK}
$r_{\Ab}(K_{\scrX_\pp})=g-1$
\end{corollary}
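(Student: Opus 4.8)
The plan is to deduce $r_{\Ab}(K_{\scrX_\pp})=g-1$ from Proposition \ref{p:clifford} together with Corollary \ref{c:rankUpperBound}, which will give the two matching inequalities. First I would apply Proposition \ref{p:clifford} with the effective divisor $D_\pp$ taken to be the empty divisor $0$ (which is certainly supported at smooth points). This is legitimate since the proposition only requires $D_\pp$ effective and supported at smooth points. The conclusion is
\[
r_{\Ab}(K_{\scrX_\pp}-0)=r_{\Ab}(K_{\scrX_\pp})\leq g-\tfrac{\deg 0}{2}-1=g-1,
\]
giving one inequality immediately.

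For the reverse inequality $r_{\Ab}(K_{\scrX_\pp})\geq g-1$, the natural approach is to invoke Corollary \ref{c:rankUpperBound}: since $\F_\pp$ is assumed algebraically closed in this subsection, $X_\pp(K_\pp)$ is infinite, and for a divisor $D$ on $X_\pp(K_\pp)$ whose reduction is $K_{\scrX_\pp}$ one has $r_{\Ab}(\rho(D))\geq r_X(\rho(D))=r_X(D)=h^0(X_\pp,\calO_{X_\pp}(D))-1$. Here I would take $D$ to be (the closure of) a canonical divisor $K_{X_\pp}$ on the generic fiber, chosen supported on $K_\pp$-points — possible because $K_\pp$-points are dense — and arranged so that its closure reduces to a divisor linearly equivalent to $K_{\scrX_\pp}$ on the smooth locus. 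Then $h^0(X_\pp,\calO_{X_\pp}(K_{X_\pp}))-1=g-1$ by Riemann--Roch on the smooth projective curve $X_\pp$, so $r_{\Ab}(K_{\scrX_\pp})\geq g-1$. Combining with the upper bound yields equality.

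The main obstacle I anticipate is the bookkeeping needed to realize the canonical class $K_{\scrX_\pp}$ that was ``picked once and for all'' in $\Div(\scrX_\pp^{\sm}(\F_\pp))$ as the reduction $\rho(\overline{K_{X_\pp}})$ (up to twist, which does not affect $r_{\Ab}$) of the closure of an explicit canonical divisor on the generic fiber supported on $K_\pp$-rational points. One must check that the relative dualizing sheaf restricts to the canonical bundle on the smooth locus of the special fiber and that a suitable global differential on $X_\pp$ can be chosen with horizontal zero divisor landing in $\scrX_\pp^{\sm}(\F_\pp)$; since $r_{\Ab}$ depends only on $\rho(\calD)$ up to twist and linear equivalence on the components, any such representative suffices. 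Alternatively — and perhaps more cleanly — one can bypass the generic fiber entirely: since $r_{\Ab}$ is defined via the conditions (1)--(3), and $K_{\scrX_\pp}=\rho(\mathcal D)$ for some horizontal $\mathcal D$, one directly observes that for a generic effective $E_\pp$ of degree $g-1$ on $\scrX_\pp^{\sm}$ the twist general position lemma forces $h^0((K_{\scrX_\pp})(\varphi)|_{X_v}-E_\pp|_{X_v})$ to behave as expected on each component, and the global sections glue by comparison with Riemann--Roch for the regular model; but the generic-fiber route via Corollary \ref{c:rankUpperBound} is shorter and I would present that one.

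\begin{proof}[Proof of Corollary \ref{c:rankOfK}]
Applying Proposition \ref{p:clifford} with $D_\pp=0$, which is effective and (vacuously) supported at smooth points, gives $r_{\Ab}(K_{\scrX_\pp})\leq g-1$. For the reverse inequality, since $\F_\pp$ is algebraically closed, $\scrX_\pp^{\sm}(\F_\pp)$ is infinite, hence so is $X_\pp(K_\pp)$ by surjectivity of $\rho$, and Corollary \ref{c:rankUpperBound} (together with the inequality $r_{\Ab}\geq r_X$ of Remark \ref{R:rankStrata} and Proposition \ref{L:upperBounds}) applies. Choose a divisor $D\in\Div(X_\pp(K_\pp))$ whose closure reduces, up to a twist, to the chosen representative $K_{\scrX_\pp}$; this is possible because the relative dualizing sheaf is the canonical bundle on the smooth locus of $\scrX_\pp$ and $K_\pp$-points are dense in $X_\pp$, so a general global differential has horizontal zero divisor supported on points reducing into $\scrX_\pp^{\sm}(\F_\pp)$. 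Since twisting does not change $r_{\Ab}$, we get
\[
r_{\Ab}(K_{\scrX_\pp})=r_{\Ab}(\rho(D))\geq r_X(D)=h^0(X_\pp,\calO_{X_\pp}(D))-1=g-1,
\]
the last equality by Riemann--Roch on the smooth projective curve $X_\pp$. Combining the two inequalities gives $r_{\Ab}(K_{\scrX_\pp})=g-1$.
\end{proof}
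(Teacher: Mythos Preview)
Your proof is correct and follows essentially the same approach as the paper: the upper bound comes from Proposition~\ref{p:clifford} with $D_\pp=0$, and the lower bound is what the paper calls ``the specialization lemma,'' i.e.\ exactly Corollary~\ref{c:rankUpperBound} applied to a canonical divisor on $X_\pp$. The paper compresses the lower bound into a single sentence, whereas you spell out the bookkeeping of lifting $K_{\scrX_\pp}$ to a $K_\pp$-rational canonical divisor on the generic fiber via the relative dualizing sheaf; this extra care is not wrong, but note that since $r_{\Ab}$ depends only on the restrictions $\calO_{\scrX_\pp}(\,\cdot\,)|_{X_v}$, any $D\in\Div(X_\pp(K_\pp))$ in the canonical class already satisfies $r_{\Ab}(\rho(D))=r_{\Ab}(K_{\scrX_\pp})$, so the ``up to a twist'' justification can be shortened.
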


\begin{proof}
By the specialization lemma, $r_{\Ab}(K_{\scrX_\pp})\geq g-1$.  The opposite inequality is given by Proposition \ref{p:clifford}
\end{proof}

\remark{It is a result of Amini-Baker \cite{AB_metrized} that $r_{\Ab}$ satisfies the appropriate version of the Riemann-Roch theorem.  They note that the Clifford bounds follow from this Riemann-Roch theorem.  We include an independent proof for the sake of completeness.  Additionally, our proof contains an algorithm that may give bounds sharper than Theorem \ref{T:mainthm} in specific examples.  
%
}

\remark{We do not know if $r_{\toric}$ satisfies the Riemann-Roch theorem.  In practice, one may obtain better bounds on $r_X$ by considering $r_{\toric}$.  See Subsection \ref{S:remarks} for examples.}

\section{Bounding the degree of the Chabauty Divisor}
\label{S:finalTouch}

Here we show that the degree of the Chabauty divisor $D_{\chab,\pp}$ of Subsection
\ref{ss:chabautyDivisor} is at most $2r$, completing the proof of
Theorem \ref{T:mainthm}. The rough idea of the proof is the following
two steps -- (i) passing to
a semistable model so that our results about geometric rank functions
of Section \ref{S:geometricRank} apply, and (ii) using the 
subspace $V_{\chab}$ (see Proposition \ref{P:differentials}) to show that
$g-r-1 \leq  r_{\Ab}(K_{\scrX_\pp}-D_{\chab,\pp})$, which, combined with the Clifford bound
$r_{\Ab}(K_{\scrX_\pp}-D_{\chab,\pp}) \leq
\frac{1}{2}\deg(K_{\scrX_\pp}-D_{\chab,\pp})$ and a rearrangement,
directly proves the theorem. (Actually, step (ii) is a bit more subtle
that this, and we must make use of the `avoidance' variant of
Clifford's theorem of Remark \ref{r:clifford}.)\\

 In the following we continue with the setup of  Section \ref{S:method} to prove the theorem in the semistable case.

\begin{proposition} 
\label{P:mainSemistable}
Let $\scrX$ be a regular and semistable curve over $\calO_{K_{\pp}}$.   Let $D_{\chab,\pp}$ be the Chabauty divisor (see Equation \ref{eq:chabautyDivisor}).  Then $\deg D_{\chab,\pp}\leq 2r$.
\end{proposition}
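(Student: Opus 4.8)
The plan is to derive the inequality from the Clifford bound for $r_{\Ab}$ (Proposition~\ref{p:clifford}), in the sharpened ``avoidance'' form of Remark~\ref{r:clifford}, by using the space $V_{\chab}$ of Proposition~\ref{P:differentials} to manufacture many sections of twists of $K_{\scrX_\pp}-D_{\chab,\pp}$. A preliminary reduction: I may assume $\F_\pp$ is algebraically closed, replacing $\calO_{K_\pp}$ by $\calO_{K_\pp^{\mathrm{unr}}}$. This keeps $\scrX$ regular and semistable, leaves $r=\rank J(K)$ unchanged, preserves $\dim V_{\chab}\ge g-r$ (the conditions defining $V_{\chab}$ are $K_\pp$-linear, so the solution space only extends scalars), and does not decrease $\deg D_{\chab,\pp}$ (the invariants $n_{\widetilde Q}$ for $\widetilde Q\in\scrX_\pp^{\sm}(\F_\pp)$ are unchanged --- they record only which of certain $\F_\pp$-linear functionals vanish identically on the reduction of $V_{\chab}$ --- while new residue classes contribute non-negatively), cf.\ the remark opening Section~\ref{S:geometricRank}; this is the only use of the algebraic-closedness hypothesis of Proposition~\ref{p:clifford}. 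I also recall from Subsection~\ref{ss:chabautyDivisor} the saturated $\calO_{K_\pp}$-lattice $V_{\calO_{K_\pp}}=\phi^{-1}(V_{\chab})\subset H^0(\scrX,\omega_\pi)$, of rank $\dim V_{\chab}$.

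The core of the argument is a dictionary between $p$-adic integration and divisors on $\scrX$. Given $\omega\in V_{\calO_{K_\pp}}$ not divisible by a uniformizer $\varpi$, write its divisor of zeros on $\scrX$ (where $\omega_\pi$ is invertible, $\pi$ being flat with Gorenstein fibres) as $\divv_\scrX(\omega)=H_\omega+\sum_v c_v X_v$ with $H_\omega$ horizontal, $c_v\ge 0$, and $\min_v c_v=0$. Viewing $\omega$ instead as a section of $\omega_\pi\otimes\calO_\scrX(-\sum_v c_vX_v)$, its divisor of zeros is the horizontal divisor $H_\omega$; hence its restriction $\bar\tau_\omega$ to $\scrX_\pp$ is nonzero on every component $X_v$. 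Under the identification of $\bigl(\omega_\pi\otimes\calO_\scrX(-\sum c_vX_v)\bigr)|_{\scrX_\pp}$ with $\calO_{\scrX_\pp}(K_{\scrX_\pp})(\varphi_\omega)$, where $\varphi_\omega\in\calM(\Gamma)$ is the twist $v\mapsto -c_v$, using that $\omega_\pi|_{\scrX_\pp}$ is the dualizing sheaf of the nodal curve $\scrX_\pp$ (in the class of $K_{\scrX_\pp}$) together with Lemma~\ref{l:centralfiber}, I claim that for every $\widetilde Q\in\scrX_\pp^{\sm}(\F_\pp)$,
\[
\ord_{\widetilde Q}\bar\tau_\omega \;=\; n(\omega,\widetilde Q) \;=\; \operatorname{mult}_{\widetilde Q}\bigl(i^*H_\omega\bigr)\;\ge\; n_{\widetilde Q},
\]
where $i^*H_\omega$ is the restriction to $\scrX_\pp$ of the Cartier divisor $H_\omega$. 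The middle equality unwinds the definition of $n(\omega,\widetilde Q)$ --- rescaling $\omega$ to have neither zero nor pole along the component through $\widetilde Q$ is exactly multiplication by $\varpi^{-c_v}$, and the remaining contributions $\sum_{w\ne v}(c_w-c_v)(X_w\cdot X_v)$ sit at nodes, away from $\widetilde Q$ --- the first equality identifies the reduction used there with $\bar\tau_\omega|_{X_v}$, and the final inequality is the definition $n_{\widetilde Q}=\min_{\omega'\in V_{\chab}}n(\omega',\widetilde Q)$ (recall $\phi(\omega)\in V_{\chab}$). In particular $\bar\tau_\omega$ vanishes along $D_{\chab,\pp}$, so it descends to a section of $\calO_{\scrX_\pp}(K_{\scrX_\pp}-D_{\chab,\pp})(\varphi_\omega)$ still nonzero on every component; this already gives $r_{\Ab}(K_{\scrX_\pp}-D_{\chab,\pp})\ge 0$, and imposing extra vanishing will boost it.

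Now suppose, for contradiction, that $\deg D_{\chab,\pp}\ge 2r+1$. Then $\deg(K_{X_\pp}-D_{\chab,\pp})=2g-2-\deg D_{\chab,\pp}<2(g-r-1)$, so Proposition~\ref{p:clifford} in the form of Remark~\ref{r:clifford} (applied to $D_\pp=D_{\chab,\pp}$; when the degree bound there is negative, take $E_\pp=0$ and use Proposition~\ref{p:clifford} directly) furnishes an effective $E_\pp\in\Div(\scrX_\pp^{\sm}(\F_\pp))$ with $\deg E_\pp\le g-r-2$, with $\Supp(E_\pp)\cap\Supp(D_{\chab,\pp})=\emptyset$, and such that for every twist $\varphi$ there is a component $X_v$ of $\scrX_\pp$ on which $\calO_\scrX(\varphi)|_{X_v}\otimes\calO_{\scrX_\pp}(K_{\scrX_\pp}-D_{\chab,\pp}-E_\pp)|_{X_v}$ has no nonzero section. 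I contradict this last property. Lift $E_\pp$ by Hensel's lemma and regularity of $\scrX$ to an effective horizontal divisor $\widetilde E$ on $\scrX$, supported on $K_\pp$-points, with $i^*\widetilde E=E_\pp$. Requiring a differential to vanish along the generic fibre of $\widetilde E$ imposes $\deg E_\pp<\dim V_{\chab}$ linear conditions on $V_{\chab}$, so there is a nonzero $\omega\in V_{\chab}$, which we take $\varpi$-primitive in $V_{\calO_{K_\pp}}$, with $H_\omega\ge\widetilde E$. By the dictionary, $\bar\tau_\omega$ is nonzero on every component of $\scrX_\pp$, and for each $\widetilde Q\in\scrX_\pp^{\sm}(\F_\pp)$ one has $\ord_{\widetilde Q}\bar\tau_\omega\ge n_{\widetilde Q}+E_\pp(\widetilde Q)$: if $\widetilde Q\in\Supp(D_{\chab,\pp})$ then $E_\pp(\widetilde Q)=0$ and $\ord_{\widetilde Q}\bar\tau_\omega\ge n_{\widetilde Q}$; if $\widetilde Q\in\Supp(E_\pp)$ then $n_{\widetilde Q}=0$ and $\ord_{\widetilde Q}\bar\tau_\omega=\operatorname{mult}_{\widetilde Q}(i^*H_\omega)\ge\operatorname{mult}_{\widetilde Q}(i^*\widetilde E)=E_\pp(\widetilde Q)$; otherwise both terms vanish. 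Thus $\bar\tau_\omega$ vanishes along $D_{\chab,\pp}+E_\pp$ and descends to a section of $\calO_{\scrX_\pp}(K_{\scrX_\pp}-D_{\chab,\pp}-E_\pp)(\varphi_\omega)$ nonzero on every component, contradicting the defining property of $E_\pp$. Hence $\deg D_{\chab,\pp}\le 2r$.

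The step I expect to be the main obstacle is the dictionary of the second paragraph: matching the integration-theoretic $n(\omega,\widetilde Q)$, defined through component-by-component rescalings, with the order of vanishing of the single global section $\bar\tau_\omega$ on $\scrX_\pp$, and checking that $\bar\tau_\omega$ remains nonzero on every component. The other new point, absent from the good-reduction case, is that one cannot treat $r_{\Ab}(K_{\scrX_\pp}-D_{\chab,\pp})$ by brute force --- when an effective test divisor meets $\Supp(D_{\chab,\pp})$, the two lower bounds for $\ord_{\widetilde Q}\bar\tau_\omega$ only combine as a maximum, not a sum --- and the cure is exactly the disjointness clause in the avoidance form of Clifford's theorem, which is why Remark~\ref{r:clifford}, rather than Proposition~\ref{p:clifford} alone, is needed.
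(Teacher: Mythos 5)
Your proposal is correct and follows essentially the same strategy as the paper's proof: produce the avoidance divisor $E_\pp$ from Remark~\ref{r:clifford}, show via a lift of $E_\pp$ and a dimension count in $V_{\chab}$ that a $\varpi$-primitive $\omega$ yields a section of $\calO_{\scrX_\pp}(K_{\scrX_\pp}-D_{\chab,\pp}-E_\pp)(\varphi_\omega)$ nonvanishing on every component, and contradict the defining property of $E_\pp$. The differences are organizational (you frame the whole argument as a contradiction to $\deg D_{\chab,\pp}\geq 2r+1$, whereas the paper isolates the claim $\deg E_\pp\geq\dim V_{\chab}-1$) and expository (you spell out the horizontal/vertical decomposition $\divv_{\scrX}(\omega)=H_\omega+\sum c_vX_v$, the identification of $n(\omega,\widetilde Q)$ with $\operatorname{mult}_{\widetilde Q}(i^*H_\omega)$, and the reduction to $\F_\pp$ algebraically closed, all of which the paper leaves implicit or defers to the subsequent theorem), but the mathematical content is the same.
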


\begin{proof}
By using the avoidance form of the Clifford bounds as in Remark \ref{r:clifford}, we obtain an effective divisor $E_\pp \in \Div(\scrX^{\sm}_\pp(\F_\pp))$  such that
\begin{itemize}
\item [(i)]   $\deg E_\pp \leq \frac{1}{2}\deg(K_{\scrX_\pp}-D_{\chab,\pp})$,  
\item [(ii)] For any twist $\varphi$, $\calO_{\scrX}(\varphi)|_{X_v}\otimes \calO_{\scrX_\pp}(K_{\scrX_\pp}-D_{\chab,\pp} - E_\pp)|_{X_v}$ has no non-zero sections for some component $X_v$ of $\scrX_\pp$, and 
\item [(iii)] $\Supp(E_\pp) \cap \Supp(K_{\scrX_\pp}-D_{\chab,\pp}) = \emptyset$. 
\end{itemize}
Moreover, we claim that $\deg(E_\pp) \geq \dim V_{\chab}-1$.  Indeed,
suppose not, and let $E \in \Div(X_\pp(K_\pp))$ be an effective
divisor such that $\rho(E) = E_\pp$. Then there would exist an element
$\omega$ of $V_{\chab}$ vanishing on $E$ (with the correct
multiplicities).   We may view $\omega$ as a rational section of $K_{\scrX}$.  By picking a twist $\varphi$ 
as in the proof of Proposition \ref{L:upperBounds}, we can treat $\omega$ as a regular section of $K_{\scrX}(\varphi)$
vanishing along no component of the special fiber.  For any component $X_v$, $\omega$ vanishes on $E_\pp$ and on $D_{\chab,\pp}$ by construction.  Since $\Supp\rho(E_\pp)$ is disjoint from $\Supp D_{\chab,\pp}$, $\omega$ restricts to
a non-zero section of $\calO_{\scrX}(\varphi)|_{X_v}\otimes \calO_{\scrX_\pp}(K_{\scrX_\pp}-D_{\chab,\pp} - E_\pp)|_{X_v}$ for each $v$.  This contradicts condition
(ii) in the choice of $E_\pp$.

%

Putting these inequalities together gives
\[
\deg(K_{\scrX_\pp}-D_{\chab,\pp}) \geq 2\deg(E_\pp) \geq 2\dim V_{\chab}-2 \geq 2g-2r-2.
\]
Rearranging completes the proof.
\end{proof}

Now we remove the semistability hypothesis.

\begin{theorem} Let $\scrX$ be a proper regular curve over over $\calO_{K_{\pp}}$ with Chabauty divisor $D_{\chab,\pp}$. Then $\deg D_{\chab,\pp} \leq 2r$.
\end{theorem}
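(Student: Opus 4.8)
The plan is to reduce the general regular proper model to the semistable case already handled in Proposition~\ref{P:mainSemistable}, exploiting the remark at the start of Section~\ref{S:geometricRank} that the degree of the Chabauty divisor can only \emph{grow} under the relevant modifications. The key point is that $\deg D_{\chab,\pp}$ is a quantity we want to bound \emph{from above}, so any operation that enlarges it is harmless.

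First I would invoke semistable reduction: after a finite extension $L/K_\pp$, the base change $X_L$ acquires a semistable model, and by resolving singularities we may take a regular semistable model $\scrX'/\calO_L$ dominating (a base change of) $\scrX$. Here one must be slightly careful that the hypothesis $p > 2r+2$ with $p$ unramified is used only to kill the $\delta$-terms via Proposition~\ref{P:residueBound}; after extending to $L$ the ramification index $e$ may grow, but the bound $\deg D_{\chab,\pp}\le 2r$ we are proving is a statement about $n$-invariants (the quantity $\sum n_{\widetilde Q}$), which is insensitive to $e$, so the argument of Proposition~\ref{P:mainSemistable} goes through over $\calO_L$ verbatim with $r_L=\rank J(L)\ge r$. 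Second, I would compare the Chabauty divisors: writing $D_{\chab,\pp}$ for the model $\scrX/\calO_{K_\pp}$ and $D'_{\chab}$ for $\scrX'/\calO_L$, I claim $\deg D_{\chab,\pp}\le \deg D'_{\chab}$. This is because (a) extension of scalars only \emph{decreases} each $n_{\widetilde Q}$ cannot happen --- rather, passing to a larger field of definition gives $V_{\chab}$ at least as large a codimension constraint, so $n_{\widetilde Q}$ can only go up or stay the same, and more importantly (b) each smooth $\F_\pp$-point of $\scrX_\pp$ contributes, and a proper birational morphism $\scrX'\to\scrX_{\calO_L}$ only subdivides residue classes, so the $n$-invariant at a point of $\scrX_\pp$ is the sum of $n'$-invariants over the points of $\scrX'_{L}$ lying above it; summing, $\deg D_{\chab,\pp}\le \deg D'_{\chab}$. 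Combining with $\deg D'_{\chab}\le 2r_L$ from Proposition~\ref{P:mainSemistable} would finish things only if $r_L=r$, which need not hold.

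To handle the possible growth $r_L > r$, the cleaner route is to keep $V_{\chab}$ defined over $K_\pp$ throughout: the space $V_{\chab}\subset H^0(X_\pp,\Omega^1)$ of Proposition~\ref{P:differentials} has $\dim V_{\chab}\ge g-r$, and its base change $V_{\chab}\otimes_{K_\pp} L$ still consists of differentials that annihilate $X(K)\subset X(L)$, so it is contained in the Chabauty space $V_{\chab,L}$ for $\scrX'/\calO_L$, with $\dim_L(V_{\chab}\otimes L) = \dim_{K_\pp} V_{\chab}\ge g-r$. Running the proof of Proposition~\ref{P:mainSemistable} with this fixed subspace (of dimension $\ge g-r$) rather than the full $V_{\chab,L}$ gives the avoidance divisor $E'_\pp$ with $\deg E'_\pp \ge (g-r)-1$, hence $\deg(K_{\scrX'_L}-D'_{\chab})\ge 2(g-r)-2$, i.e. $\deg D'_{\chab}\le 2r$ with the \emph{original} $r$. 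Composing with the inequality $\deg D_{\chab,\pp}\le \deg D'_{\chab}$ from the previous paragraph yields $\deg D_{\chab,\pp}\le 2r$, as desired.

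The main obstacle is the bookkeeping in the comparison $\deg D_{\chab,\pp}\le \deg D'_{\chab}$: one must check carefully how the $n$-invariant $n(\omega,\widetilde Q)=\ord_{\widetilde Q}\widetilde\omega$ transforms under both base change $\calO_{K_\pp}\hookrightarrow\calO_L$ (which rescales valuations by $e(L/K_\pp)$ but does not change orders of vanishing of the reduced differential along a component) and under the resolution morphism $\scrX'\to\scrX_{\calO_L}$ (where a single residue disk in $\scrX$ may break into several disks in $\scrX'$, distributing the zeros of $\omega$ among them, so that the sum of the new $n'$-invariants over a fiber equals at least the old $n$-invariant --- blowing up a smooth point of a component can only move a zero onto the exceptional component or keep it, never destroy it). Once this is set up correctly, the rest is the two-line rearrangement above.
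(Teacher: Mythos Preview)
Your approach is essentially the paper's: pass to a finite extension with a regular semistable model, keep the \emph{original} Chabauty space $V_{\chab}\otimes_{K_\pp}L$ (so the dimension stays $\ge g-r$, avoiding the $r_L>r$ problem you correctly flagged), apply Proposition~\ref{P:mainSemistable} there, and compare. The paper also first replaces $K_\pp$ by its maximal unramified extension so that the residue field is algebraically closed, which you should do as well since Section~\ref{S:geometricRank} requires it.

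Your one real inaccuracy is in the comparison $\deg D_{\chab,\pp}\le \deg D'_{\chab}$. The ``subdividing residue classes'' picture in your point (b) is not what happens: the resolution $\scrX''\to\scrX'$ is an isomorphism over the smooth locus of $\scrX'_\pp$, so each smooth $\F_\pp$-point $\widetilde Q$ of $\scrX_\pp$ has a \emph{single} preimage in $\scrX''_\pp$, not several. What one actually shows is that $n'_{\widetilde Q}\ge n_{\widetilde Q}$ at each such point (so the pullback of $D_{\chab,\pp}$ is a subdivisor of $D'_{\chab,\pp}$), and any extra smooth points on exceptional components only add to $D'_{\chab,\pp}$. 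The inequality $n'_{\widetilde Q}\ge n_{\widetilde Q}$ is not automatic: a priori an $L$-linear combination $\omega'=\sum a_i\otimes\omega_i$ could, after normalizing along the component $X_w$, vanish to \emph{lower} order at $\widetilde Q$ than each $\omega_i$ if cancellation occurs. The paper handles this by choosing an expression for $\omega'$ maximizing $\min_i v'(a_i)$ and arguing that the leading terms cannot cancel along $X_w$ (else one could rewrite with higher minimum valuation), whence $\ord_{\widetilde Q}(\omega')\ge \min_i \ord_{\widetilde Q}(\omega_i)\ge n_{\widetilde Q}$. You correctly identified this bookkeeping as the main obstacle; the paper's valuation argument is the missing ingredient.
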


\begin{proof}

By replacing $K_\pp$ by its maximal unramified extension, we may suppose that the residue field of $\OO_{K_\pp}$ is algebraically closed.
Let $K'_\pp$ be a finite extension of $K_\pp$ over which the base extension $X_\pp\times_{K_\pp} K_\pp'$ has regular semistable reduction. Denote the valuation on $K'_\pp$ by $v'$.  Let $\scrX'$ be the base
change $\scrX \times_{\calO_{K_\pp}}\calO_{K'_\pp}$ of $\scrX$ to
$\calO_{K'_\pp}$. (Note that $\calO_{K'_\pp}$ is a DVR with infinite
residue field $\F_{\pp}'$.) Then $\scrX'$ may no longer be
regular. However, there exists a dominant proper map 
$\scrX'' \to \scrX'$ of curves over $\calO_{K_\pp}'$ such that $\scrX''$ is regular and \emph{semistable}
(though probably not minimal) and  which is an isomorphism on generic
fibers. Define $V_{\chab}' = K_\pp'\otimes_{K_\pp} V_{\chab}$ and
define $D_{\chab,\pp}'$ as in Definition \ref{D:chabautyDivisor}; i.e., 
\[
D'_{\chab,\pp} = \sum_{\widetilde{Q} \in \scrX^{''\sm}(\F_{\pp}')} 
\left(\min\left\{n(\omega',\widetilde{Q}) 
: \omega' \in V'_{\chab}\right\}\right) \widetilde{Q}.
\]


We claim that $\deg D_{\chab,\pp} \leq \deg D'_{\chab,\pp}$ from which
the theorem would follow from Proposition \ref{P:mainSemistable}.  In
fact, the the pullback of $D_{\chab,\pp}$ to $\scrX''_{\pp}$ is a
subdivisor of $D'_{\chab,\pp}$.   Indeed, let $\widetilde{Q}$ be a
point of $\scrX^{''\sm}(\F_{\pp}')$ 
and denote by $X_w$ the component of $\scrX^{''}_{\pp}$
containing $\widetilde{Q}$.
Now, let
$\omega' \in V_{\chab}'$. Then $\omega'$ is a finite sum $\sum
a_i\otimes\omega_i$, for $\omega_i \in V_{\chab}$ and $a_i \in
K'_\pp$.  By associating elements of $K_\pp$ across the tensor
product, we may suppose that each $\omega_i$ extends to $\scrX$ and
that none vanish along $X_w$. 
Moreover, choose an expression $\sum a_i\otimes\omega_i$ for
$\omega'$ such that $\min_i v'(a_i)$ is maximized.  Now express each
$a_i$ as $(\pi')^{k_i}u'_i$ where $\pi'$ is a uniformizer of $K'_\pp$
and $u'_i$ has valuation equal to $0$.  Let $k$ be the minimum of the
$k_i$'s.  

We claim that  $\sum_{i|k_i=k} u'_i\otimes\omega_i$ does not vanish
along $X_w$.  If it did, for each $i$ with $k_i=k$, take
$u_i\in K_{\pp}^{\ur}$ that has the same residue as $u'_i$ and rewrite the sum of terms with minimum $v'(a_i)$ as 
\[(\pi')^k\left(\sum_{i|k_i=k} (u'_i-u_i)\otimes\omega_i+\sum_{i|k_i=k} u_i\otimes\omega_i\right).\]  
Since $\sum u_i\otimes\omega_i=1\otimes \sum u_i\omega_i$ vanishes along $X_w$, it can be
written as $(\pi')^l\otimes\tilde{\omega}$ for some $l>0$ and $\tilde{\omega}$ in
$V_{\chab}$ where $\tilde{\omega}$ does not vanish along
$X_w$.  Moreover, each $u'_i-u_i$ has positive valuation.
Consequently, $\omega$ can be written as a linear combination with
higher $\min(v'(a_i))$ contradicting our choice of expression for
$\omega$. 

 Now, since $\sum_{i|k_i=k} u'_i\omega_i$ does not vanish along $X_w$, by the non-Archimedean property of vanishing orders, 
$v_{\widetilde{Q}}(\omega')  \geq \min_{i|k_i=k}{v_{\widetilde{Q}}(\omega_i)}$.
Thus  the claim follows. 

\end{proof}

\begin{remark} One can obtain this bound using linear systems on weighted graphs as developed by Amini and Caporaso \cite{AC_weighted}.  Their theory is more combinatorial than ours and does not consider the algebraic geometry of the irreducible components.  Our approach, however, may give sharper bounds in examples when one has an understanding of the geometry of the components.  \end{remark}

\begin{remark} We describe in this remark a rank function that is
  constructed to give the sharpest bound in situations like those
  considered above.  Note that the divisor $D_{\chab,\pp}$ is supported on the special fiber.  Given a line bundle $\calL$ on a semistable model $\scrX$ and an effective divisor $D_\pp$ supported on smooth points of the special fiber, one can define a rank function $r(\calL,D_\pp)$ by saying that $r(\calL,D_\pp)\geq r$ if and only if for every effective divisor $E_\pp\in\Div(\scrX_\pp^{\text{sm}}(\F_\pp))$ of degree $r$, there is a section $s$ of $\calL|_X$ such that $D_\pp+E_\pp$ is contained (with multiplicity) in the reduction of the divsior $(s)$.   Indeed, the proof of Proposition \ref{P:mainSemistable} uses a bound on $r(K_{\scrX_\pp},D_{\chab,\pp})$.
\end{remark}

\section{Examples}
\label{S:examples}

    \begin{example}
\label{ex:sharp}
	Here we give an example of a hyperelliptic curve with bad reduction where the refined bound
	of Theorem \ref{T:mainthm}
	is sharp. Let $X$ be the smooth genus 3 hyperelliptic curve over $\Q$ with affine piece 
	\[
	   -2 \cdot 11 \cdot 19 \cdot 173 \cdot y^2 = (x-50)(x-9)(x-3)(x+13)(x^3 + 2x^2 + 3x + 4).
	\]
	This curve has bad cuspidal reduction at the prime 5 and its regular proper minimal model 
	$\scrX$ over $\Z_5$ is given by the same equation as the above Weierstrass model. 
	A descent calculation using Magma's \verb+TwoSelmerGroup+ function 
	shows that its Jacobian has rank 1. A point count reveals that 
\[
X(\Q) \supset \{ \infty, (50,0), (9,0),(3,0),(-13,0),
(25 , 20247920 ), (25 , -20247920 )
\}
\]
(and in particular $7 \leq \#X(\Q)$)  
	and $\#\scrX^{\text{sm}}_5(\F_5) = 5$. Theorem \ref{T:mainthm} reads 
	\[
	    7 \leq \#X(\Q) \leq \#\scrX_5^{\text{sm}}(\F_5) + 2 = 7,
	\]
	which determines $X(\Q)$.
\vspace{3pt}
	
	Let $J$ be the Jacobian of $X$. Then $J$ is absolutely simple. Indeed, $J$ has good 
	reduction at 13 and for $i \in \{1, \ldots, 30\}$ a computation reveals that the 
	characteristic polynomial of Frobenius for $J_{\F_{13^i}}$ is irreducible. By an argument analogous 	to \cite{PS_cyclic}*{Proposition 14.4} (see also \cite{stoll_cycles}*{Lemma 3}) we conclude 
	that $J_{\F_{13}}$ (and hence $J$) is absolutely simple. 
	
	One can check that 5 is the only prime at which our refinement of the Chabauty-Coleman bound is sharp. 
	Thus, one can use neither a map to a curve of smaller genus nor the Chabauty-Coleman 
	bound at a prime of good reduction to determine $X(\Q)$.
    \end{example}




\easterEgg{A paper should say "that I'm being nice to you, but I can seriously kick your ass if I wanted to". -- Eric Katz}

\subsection{Remarks on obtaining better bounds}
\label{S:remarks}


    
  \begin{remark} 

Let $X$ be a smooth projective geometrically integral curve over an
algebraically closed field. For an integer $0 \leq i \leq g-1$, let 
\[
r(i) = \max\{\deg D  \text{ s.t. } D \geq 0 \text{ and } r(K_{X}-D) \geq g-i -1\};
\]
Clifford's theorem implies  that $r(i) \leq 2i$. In fact  $r(i) = 2i$ if and only if $X$ is hyperelliptic, with the
    maximum witnessed when $D$ is a sum of pairs
    of points which are conjugate under the hyperelliptic involution; see \cite{hart}*{Chapter III, Theorem 5.4}.
The true value of $r(i)$ depends on the geometry of $X$; see \cite{stoll_indep}*{Section 3}. 


  \end{remark}

The situation is even more delicate (but usually favorable) when $\scrX_\pp$ has multiple components;
 next, we
give examples where the inequalities in $r_{\num}(D)\geq r_{\Ab}(D)\geq r_{\toric}(D)\geq r_X(D)$ are strict.

\begin{example}[$r_{\num}(D)>r_{\Ab}(D)$] 
\label{E:numAb}
In general, on a semistable curve $\scrX$ which is not totally degenerate
  (i.e $g(X_v) > 0$ for some component $X_v$ of $\scrX_{\pp}$)
  one expects to find divisors $D$ such that $r_{\num}(D)>r_{\Ab}(D)$.
For example, let $\scrX$ be a genus $1$ curve with good reduction, so that $\Gamma$ is a single vertex.  Pick $P\in X_\pp(K_\pp)$.  Then $r_{\Ab}(P)=0$ since there is no non-constant section of $\OO_{\scrX_\pp}(P)$ while $r_{\num}(P)=1$.
\end{example}

\begin{example}[$r_{\Ab}(D)>r_{\toric}(D)$] Let $\scrX$ be a regular
  curve which is a regular proper minimal model of its generic fiber
  and  whose special fiber has the following form: one component
  $X_v$ is a hyperelliptic curve; the other component $X_w$ is a rational
  curve; the two components meet in two nodes which are neither fixed
  points of the hyperelliptic involution on $X_v$ nor
  hyperelliptically conjugate on $X_v$.  Note that the special fiber
  is not hyperelliptic since the hyperelliptic involution does not
  extend from $X_v$ (which would follow from the universal property of
  the regular proper minimal model).  Let $P,Q\in \scrX_\pp^{\sm}(\F_{\pp})$ be hyperelliptically conjugate points on $X_v$.  Since there is a section of $\OO_{X_v}(P+Q)$, $r_{\Ab}(P+Q)=0$.  We claim that $r_{\toric}(P+Q)=-1$.  Notice that no non-trivial twist of $P+Q$ has non-negative degree on both components.  Therefore, we only have to consider sections of $\OO_{\scrX_\pp}(P+Q)$.
A section of $\OO_{\scrX_\pp}(P+Q)|_{X_v}$ gives a hyperelliptic projection which takes different values on the nodes.  A section of $\OO_{\scrX_\pp}(P+Q)|_{X_w}=\OO_{X_w}$ is just a constant function.  Therefore, no choice of sections match across both nodes.
\end{example}

\begin{example}[$r_{\toric}(D)>r_X(D)$]Let $\scrX$ be a non-hyperelliptic curve with good reduction whose special fiber is hyperelliptic.  Let $P\in\scrX(K_{\pp})$ be a point whose reduction $\rho(P)$ to the special fiber is fixed by the hyperelliptic involution.  Then $r_{\toric}(2P)=1$ since for any $Q\in \scrX_\pp(\F_\pp)$, $\OO_{\scrX_{\pp}}(\rho(2P)-Q)$ has a non-vanishing section.  However, any section of 
$\OO_{\scrX}(2P)$ must restrict to the special fiber as a constant section.  If such a section vanishes on $Q$, then it must vanish identically on $\scrX$.  \end{example}



\section*{Acknowledgments}
We would like to thank Bjorn Poonen and Anton Geraschenko for many useful conversations, Xander Faber and  Michael Stoll for helpful comments and corrections on earlier drafts, and
Dino Lorenzini for suggesting some references for the non-semistable case and encouragement and comments.  Particular thanks should be given to Matt Baker for introducing the first author to Chabauty's method and for suggesting to the second author that the totally degenerate case might follow from \cite{BakerN:RR}.
 Computations for Example \ref{S:examples} were done using 
	the software Magma \cite{Magma}.

\begin{bibdiv}
\begin{biblist}

\bib{AB_metrized}{article}{
   author={Amini, Omid},
   author={Baker, Matthew},
   title={Linear series on metrized complexes of algebraic curves},
   journal={preprint},
   date={2012}
 }

\bib{AC_weighted}{article}{
 author={Amini, Omid},
 author={Caporaso, Lucia},
 title={{R}iemann-{R}och theory for weighted graphs and tropical curves},
  journal={arXiv:1112.5134},
  date={2011},
}

\bib{BakerN:RR}{article}{
      author={Baker, Matthew},
      author={Norine, Serguei},
       title={Riemann-{R}och and {A}bel-{J}acobi theory on a finite graph},
        date={2007},
        ISSN={0001-8708},
     journal={Adv. Math.},
      volume={215},
      number={2},
       pages={766\ndash 788},
         url={http://dx.doi.org/10.1016/j.aim.2007.04.012},
      review={\MR{2355607 (2008m:05167)}},
}

\bib{BakerSpec}{article}{
   author={Baker, Matthew},
   title={Specialization of linear systems from curves to graphs},
   note={With an appendix by Brian Conrad},
   journal={Algebra Number Theory},
   volume={2},
   date={2008},
   number={6},
   pages={613--653},
   issn={1937-0652},
   review={\MR{2448666 (2010a:14012)}},
   doi={10.2140/ant.2008.2.613},
}

\bib{Bomb}{article}{
   author={Bombieri, Enrico},
   title={The Mordell conjecture revisited},
   journal={Ann. Scuola Norm. Sup. Pisa Cl. Sci. (4)},
   volume={17},
   date={1990},
   number={4},
   pages={615--640},
   issn={0391-173X},
   review={\MR{1093712 (92a:11072)}},
}

\bib{BLR}{book}{
   author={Bosch, Siegfried},
   author={L{\"u}tkebohmert, Werner},
   author={Raynaud, Michel},
   title={N\'eron models},
   series={Ergebnisse der Mathematik und ihrer Grenzgebiete (3) [Results in
   Mathematics and Related Areas (3)]},
   volume={21},
   publisher={Springer-Verlag},
   place={Berlin},
   date={1990},
   pages={x+325},
   isbn={3-540-50587-3},
   review={\MR{1045822 (91i:14034)}},
}

\bib{bruin_chab}{article}{
   author={Bruin, Nils},
   title={The Diophantine equations $x\sp 2\pm y\sp 4=\pm z\sp 6$ and $x\sp
   2+y\sp 8=z\sp 3$},
   journal={Compositio Math.},
   volume={118},
   date={1999},
   number={3},
   pages={305--321},
   issn={0010-437X},
   review={\MR{1711307 (2001d:11035)}},
}

\bib{wee}{article}{
   author={Bruin, Nils},
   author={Stoll, Michael},
   title={Deciding existence of rational points on curves: an experiment},
   journal={Experiment. Math.},
   volume={17},
   date={2008},
   number={2},
   pages={181--189},
   issn={1058-6458},
   review={\MR{2433884 (2009d:11100)}},
}

\bib{bruinS:MWSieve}{article}{
   author={Bruin, Nils},
   author={Stoll, Michael},
   title={The Mordell-Weil sieve: proving non-existence of rational points
   on curves},
   journal={LMS J. Comput. Math.},
   volume={13},
   date={2010},
   pages={272--306},
   issn={1461-1570},
   review={\MR{2685127 (2011j:11118)}},
   doi={10.1112/S1461157009000187},
}

\bib{chab}{article}{
   author={Chabauty, Claude},
   title={Sur les points rationnels des courbes alg\'ebriques de genre
   sup\'erieur \`a l'unit\'e},
   language={French},
   journal={C. R. Acad. Sci. Paris},
   volume={212},
   date={1941,}
   pages={882--885},
   review={\MR{0004484 (3,14d)}},
}

\bib{Cole_eff}{article}{
   author={Coleman, Robert F.},
   title={Effective Chabauty},
   journal={Duke Math. J.},
   volume={52},
   date={1985},
   number={3},
   pages={765--770},
   issn={0012-7094},
   review={\MR{808103 (87f:11043)}},
}

\bib{faltings}{article}{
   author={Faltings, Gerd},
   title={Finiteness theorems for {A}belian varieties over number fields},
   note={Translated from the German original [Invent.\ Math.\ {\bf 73}
   (1983), no.\ 3, 349--366; ibid.\ {\bf 75} (1984), no.\ 2, 381; MR
   85g:11026ab] by Edward Shipz},
   conference={
      title={Arithmetic geometry},
      address={Storrs, Conn.},
      date={1984},
   },
   book={
      publisher={Springer},
      place={New York},
   },
   date={1986},
   pages={9--27},
   review={\MR{861971}},
}

 \bib{grant}{article}{
    author={Grant, David},
    title={A curve for which Coleman's effective Chabauty bound is sharp},
    journal={Proc. Amer. Math. Soc.},
    volume={122},
    date={1994},
    number={1},
    pages={317--319},
    issn={0002-9939},
    review={\MR{1242084 (94k:14019)}},
 }

\bib{hart}{book}{
   author={Hartshorne, Robin},
   title={Algebraic geometry},
   note={Graduate Texts in Mathematics, No. 52},
   publisher={Springer-Verlag},
   place={New York},
   date={1977},
   pages={xvi+496},
   isbn={0-387-90244-9},
   review={\MR{0463157 (57 \#3116)}},
}

\bib{liu}{book}{
   author={Liu, Qing},
   title={Algebraic geometry and arithmetic curves},
   series={Oxford Graduate Texts in Mathematics},
   volume={6},
   note={Translated from the French by Reinie Ern\'e;
   Oxford Science Publications},
   publisher={Oxford University Press},
   place={Oxford},
   date={2006},
   pages={xvi+576},
   isbn={0-19-920249-4},
}

\bib{L_dual}{article}{
   author={Lorenzini, Dino J.},
   title={Dual graphs of degenerating curves},
   journal={Math. Ann.},
   volume={287},
   date={1990},
   number={1},
   pages={135--150},
   issn={0025-5831},
   review={\MR{1048284 (91j:14018)}},
   doi={10.1007/BF01446881},
}

\bib{L_zeta}{article}{
   author={Lorenzini, Dino J.},
   title={Two-Variable Zeta-Functions on Graphs and Riemann-Roch Theorems},
   journal={Int. Math. Res. Not.},
  date={2011},
}

\bib{LT_thue}{article}{
   author={Lorenzini, Dino J.},
   author={Tucker, Thomas J.},
   title={Thue equations and the method of Chabauty-Coleman},
   journal={Invent. Math.},
   volume={148},
   date={2002},
   number={1},
   pages={47--77},
   issn={0020-9910},
   review={\MR{1892843 (2003d:11088)}},
}

\bib{Magma}{article}{
  author={Bosma, Wieb},
  author={Cannon, John},
  author={Playoust, Catherine},
  title={The Magma algebra system. I. The user language},
  note={Computational algebra and number theory (London, 1993). Magma is available at {\tt http://magma.maths.usyd.edu.au/magma/ }\phantom {m}},
  journal={J. Symbolic Comput.},
  volume={24},
  date={1997},
  number={3-4},
  pages={235\ndash 265},
  issn={0747-7171},
  review={MR1484478},
  label={Magma},
}

\bib{PMc_survey}{misc}{
   author={McCallum, William},
   author={Poonen, Bjorn},
    title={The method of Chabauty and Coleman},
     date={2010-06-14},
     note={Preprint, \url{http://www-math.mit.edu/~poonen/papers/chabauty.pdf}, to appear in {\em Panoramas et Synth\`eses}, Soci\'et\'e Math.\ de France},
}

\bib{poonen_survey}{article}{
   author={Poonen, Bjorn},
   title={Computing rational points on curves},
   conference={
      title={Number theory for the millennium, III},
      address={Urbana, IL},
      date={2000},
   },
   book={
      publisher={A K Peters},
      place={Natick, MA},
   },
   date={2002},
   pages={149--172},
   review={\MR{1956273 (2003k:11105)}},
}


\bib{pss}{article}{
   author={Poonen, Bjorn},
   author={Schaefer, Edward F.},
   author={Stoll, Michael},
   title={Twists of $X(7)$ and primitive solutions to $x\sp 2+y\sp 3=z\sp
   7$},
   journal={Duke Math. J.},
   volume={137},
   date={2007},
   number={1},
   pages={103--158},
   issn={0012-7094},
   review={\MR{2309145}},
}

\bib{PS_cyclic}{article}{
   author={Poonen, Bjorn},
   author={Schaefer, Edward F.},
   title={Explicit descent for Jacobians of cyclic covers of the projective
   line},
   journal={J. Reine Angew. Math.},
   volume={488},
   date={1997},
   pages={141--188},
   issn={0075-4102},
   review={\MR{1465369 (98k:11087)}},
}

\bib{poonen:brauerHeuristics}{article}{
   author={Poonen, Bjorn},
   title={Heuristics for the Brauer-Manin obstruction for curves},
   journal={Experiment. Math.},
   volume={15},
   date={2006},
   number={4},
   pages={415--420},
   issn={1058-6458},
   review={\MR{2293593 (2008d:11062)}},
}

\bib{sko34}{article}{
   author={Skolem, Th.},
   title={Ein Verfahren zur Behandlung gewisser exponentialer Gleichungen und diophantischer
Gleichungen,},
   journal={8. Scand. Mat. Kongr. },
   place={Stockholm},
   date={1934},
   pages={163-169},
}

\bib{stoll_cycles}{article}{
   author={Stoll, Michael},
   title={Rational 6-cycles under iteration of quadratic polynomials},
   journal={LMS J. Comput. Math.},
   volume={11},
   date={2008},
   pages={367--380},
   issn={1461-1570},
   review={\MR{2465796 (2010b:11067)}},
   doi={10.1112/S1461157000000644},
}

\bib{stoll_indep}{article}{
   author={Stoll, Michael},
   title={Independence of rational points on twists of a given curve},
   journal={Compos. Math.},
   volume={142},
   date={2006},
   number={5},
   pages={1201--1214},
   issn={0010-437X},
   review={\MR{2264661}},
}

\bib{stoll:rationalPointsSurvey}{article}{
   author={Stoll, Michael},
   title={Rational points on curves},
   language={English, with English and French summaries},
   journal={J. Th\'eor. Nombres Bordeaux},
   volume={23},
   date={2011},
   number={1},
   pages={257--277},
   issn={1246-7405},
   review={\MR{2780629 (2012d:14037)}},
}

\bib{Vojta}{article}{
   author={Vojta, Paul},
   title={Siegel's theorem in the compact case},
   journal={Ann. of Math. (2)},
   volume={133},
   date={1991},
   number={3},
   pages={509--548},
   issn={0003-486X},
   review={\MR{1109352 (93d:11065)}},
}


\end{biblist}
\end{bibdiv}

\end{document}